\numberwithin{equation}{section}
\title{Quantitative decompositions of Lipschitz mappings into metric spaces}
\author{Guy C. David}
\address{Department of Mathematical Sciences\\ Ball State University, Muncie, IN 47306}
\email{gcdavid@bsu.edu}
\author{Raanan Schul}
\address{Department of Mathematics\\ Stony Brook University\\ Stony Brook, NY 11794-3651}
\email{schul@math.sunysb.edu}
\date{\today}
\thanks{G.~ C.~ David was partially supported by the National Science Foundation under Grant No. DMS-1758709. R.~ Schul was partially supported by the National Science Foundation under Grants No. DMS-1763973.}
\subjclass[2010]{28A75, 53C23, 30L99.}
\begin{document}

\theoremstyle{plain}
\newtheorem{theorem}{Theorem}
\newtheorem{exercise}{Exercise}
\newtheorem{corollary}[theorem]{Corollary}
\newtheorem{scholium}[theorem]{Scholium}
\newtheorem{claim}[theorem]{Claim}
\newtheorem{lemma}[theorem]{Lemma}
\newtheorem{sublemma}[theorem]{Lemma}
\newtheorem{proposition}[theorem]{Proposition}
\newtheorem{conjecture}[theorem]{Conjecture}
\newtheorem{maintheorem}{Theorem}
\newtheorem{maincor}[maintheorem]{Corollary}
\newtheorem{mainproposition}[maintheorem]{Proposition}
\renewcommand{\themaintheorem}{\Alph{maintheorem}}

\theoremstyle{definition}
\newtheorem{fact}[theorem]{Fact}
\newtheorem{example}[theorem]{Example}
\newtheorem{definition}[theorem]{Definition}
\newtheorem{remark}[theorem]{Remark}
\newtheorem{question}[theorem]{Question}

\numberwithin{equation}{section}
\numberwithin{theorem}{section}

\newcommand{\cG}{\mathcal{G}}
\newcommand{\RR}{\mathbb{R}}
\newcommand{\HH}{\mathcal{H}}
\newcommand{\LIP}{\textnormal{LIP}}
\newcommand{\Lip}{\textnormal{Lip}}
\newcommand{\Tan}{\textnormal{Tan}}
\newcommand{\length}{\textnormal{length}}
\newcommand{\dist}{\textnormal{dist}}
\newcommand{\diam}{\textnormal{diam}}
\newcommand{\vol}{\textnormal{vol}}
\newcommand{\rad}{\textnormal{rad}}
\newcommand{\side}{\textnormal{side}}

\def\bA{{\mathbb{A}}}
\def\bB{{\mathbb{B}}}
\def\bC{{\mathbb{C}}}
\def\bD{{\mathbb{D}}}
\def\bR{{\mathbb{R}}}
\def\bS{{\mathbb{S}}}
\def\bO{{\mathbb{O}}}
\def\bE{{\mathbb{E}}}
\def\bF{{\mathbb{F}}}
\def\bH{{\mathbb{H}}}
\def\bI{{\mathbb{I}}}
\def\bT{{\mathbb{T}}}
\def\bZ{{\mathbb{Z}}}
\def\bX{{\mathbb{X}}}
\def\bP{{\mathbb{P}}}
\def\bN{{\mathbb{N}}}
\def\bQ{{\mathbb{Q}}}
\def\bK{{\mathbb{K}}}
\def\bG{{\mathbb{G}}}

\def\nrj{{\mathcal{E}}}
\def\cA{{\mathscr{A}}}
\def\cB{{\mathscr{B}}}
\def\cC{{\mathscr{C}}}
\def\cD{{\mathscr{D}}}
\def\cE{{\mathscr{E}}}
\def\cF{{\mathscr{F}}}
\def\cB{{\mathscr{G}}}
\def\cH{{\mathscr{H}}}
\def\cI{{\mathscr{I}}}
\def\cJ{{\mathscr{J}}}
\def\cK{{\mathscr{K}}}
\def\Layer{{\rm Layer}}
\def\cM{{\mathscr{M}}}
\def\cN{{\mathscr{N}}}
\def\cO{{\mathscr{O}}}
\def\cP{{\mathscr{P}}}
\def\cQ{{\mathscr{Q}}}
\def\cR{{\mathscr{R}}}
\def\cS{{\mathscr{S}}}
\def\Up{{\rm Up}}
\def\cU{{\mathscr{U}}}
\def\cV{{\mathscr{V}}}
\def\cW{{\mathscr{W}}}
\def\cX{{\mathscr{X}}}
\def\cY{{\mathscr{Y}}}
\def\cZ{{\mathscr{Z}}}

  \def\del{\partial}
  \def\diam{{\rm diam}}
	\def\VV{{\mathcal{V}}}
	\def\FF{{\mathcal{F}}}
	\def\QQ{{\mathcal{Q}}}
	\def\BB{{\mathcal{B}}}
	\def\XX{{\mathcal{X}}}
	\def\PP{{\mathcal{P}}}

  \def\del{\partial}
  \def\diam{{\rm diam}}
	\def\image{{\rm Image}}
	\def\domain{{\rm Domain}}
  \def\dist{{\rm dist}}
	\newcommand{\Gr}{\mathbf{Gr}}
\newcommand{\md}{\textnormal{md}}
\newcommand{\vspan}{\textnormal{span}}

\newcommand{\RS}[1]{{  \color{red} \textbf{Raanan:} #1}}

\begin{abstract}
We study the quantitative properties of Lipschitz mappings from Euclidean spaces into metric spaces. We prove that it is always possible to decompose the domain of such a mapping into pieces on which the mapping ``behaves like a projection mapping'' along with a ``garbage set'' that is \textbf{arbitrarily small} in an appropriate sense. Moreover, our control is quantitative, i.e., independent of both the particular mapping and the metric space it maps into.
This improves a theorem of Azzam-Schul from the paper ``Hard Sard'', and answers a question left open in that paper. The proof uses ideas of quantitative differentiation, as well as a detailed study of how to supplement Lipschitz mappings by additional coordinates to form bi-Lipschitz mappings.
\end{abstract}
\maketitle
\tableofcontents

\section{Introduction}

In this paper, we study the quantitative properties of Lipschitz mappings from Euclidean spaces into metric spaces. We prove that it is always possible to decompose the domain into pieces on which the mapping ``behaves like a projection mapping'', along with a ``garbage set'' that is arbitrarily small in an appropriate sense. Moreover, this decomposition is quantitative. This improves the main result of \cite{AS12} and answers the question posed in Remark 6.15 of that paper.

Before stating our new results precisely, we begin with some background.

\subsection{Background}
This paper is concerned with the quantitative structure of Lipschitz mappings from Euclidean spaces into metric spaces. In particular, we are interested in decomposing the domain of a given Lipschitz mapping $f$, which in our case will be the unit cube $Q_0=[0,1]^d$, into a finite number of pieces on which $f$ is well-behaved in some specific way, along with a ``garbage'' set which is small in some sense.

Moreover, we will aim for our decompositions to satisfy these properties in a \textit{quantitative} way. That is, we will aim to control
\begin{itemize}
\item the number of pieces in the decomposition,
\item the properties and bounds that $f$ will satisfy on each piece, and
\item the size of the garbage set
\end{itemize}
in a way which is \textit{independent} of the particular mapping $f$ or the metric space it maps into.

As a starting point, we recall Rademacher's theorem, which states that Lipschitz mappings between Euclidean spaces are differentiable almost everywhere. Using this, one can show that if $f:Q_0\rightarrow \RR^k$ is Lipschitz, then there are countably many sets $E_i$ on which $f$ is bi-Lipschitz and such that
$$ \HH^d(f(Q_0 \setminus \cup_i E_i))=0.$$
(See \cite[Lemma 3.2.2]{Fe}.) Here, $\HH^d$ denotes $d$-dimensional Hausdorff measure. Note that this result is interesting only if $k\geq d$.

This gives a decomposition of the domain of $f$ into nice pieces $E_i$, pieces on which $f$ acts as a bi-Lipschitz homeomorphism, and a garbage set $Q_0 \setminus \cup_i E_i$ whose image has zero measure. On the other hand, this decomposition is not quantitative: there is no control on the number of pieces, which may be infinite, or on the bi-Lipschitz constants for $f$ on each piece. It is much more difficult to obtain such control, but this was accomplished by work of David, Jones, and Semmes beginning in the late 1980's, motivated by applications to singular integrals and uniform rectifiability \cite{Da88, Jo88, Se00}. A very general statement in this vein, allowing arbitrary metric space targets, was proven by Schul \cite{Sc09}:

\begin{theorem}[Schul, Theorem 1.1 of \cite{Sc09}]\label{thm:lipbilip}
Given $\alpha \in (0,1)$ and $d\in\mathbb{N}$, there are constants $M=M(\alpha,d)$ and $L=L(d)$ with the following properties:

Let $X$ be any metric space, $Q_0$ the unit cube of $\RR^d$, and $f\colon Q_0\rightarrow X$ a $1$-Lipschitz map. Then there are sets $E_1, \dots, E_M \subseteq Q_0$ such that
\begin{enumerate}[(i)]
\item $f|_{E_i}$ is $\alpha^{-1}$-bi-Lipschitz for each $i\in\{1,\dots, M\}$, and
\item $\HH^d_\infty(f(Q_0 \setminus \cup_i E_i)) < L\alpha.$
\end{enumerate}
\end{theorem}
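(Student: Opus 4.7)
The plan is to combine a dyadic stopping-time argument, which isolates the scales at which $f$ collapses distances, with a bi-Lipschitz selection step, which upgrades cube-scale non-collapse into a pointwise bi-Lipschitz estimate. For each dyadic subcube $Q \subseteq Q_0$, set $\omega(Q) := \diam f(Q)/\diam Q \in [0,1]$; fix a small threshold $c = c(d) > 0$ and call $Q$ \emph{compressing} if $\omega(Q) < c\alpha$. Let $\mathcal S$ denote the family of maximal compressing cubes. These are pairwise disjoint, so
\[
\HH^d_\infty\!\Bigl(f\Bigl(\textstyle\bigcup_{Q \in \mathcal S} Q\Bigr)\Bigr) \;\leq\; \sum_{Q \in \mathcal S} (\diam f(Q))^d \;\leq\; (c\alpha)^d \sum_{Q \in \mathcal S} (\diam Q)^d \;\lesssim_d\; \alpha^d \;\leq\; L(d)\,\alpha,
\]
so $\bigcup \mathcal S$ may be absorbed into the garbage set with the required content bound.

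The good sets $E_i$ must therefore be located inside $G := Q_0 \setminus \bigcup \mathcal S$, on which every dyadic subcube has $\omega(Q) \geq c\alpha$. I would first partition $G$ into $N_0 = N_0(\alpha, d)$ subregions whose side lengths are comparable to a suitable power of $\alpha$; the non-compression at the top scale then propagates to a definite pair-separation for most point pairs. Within each subregion, the scale-by-scale condition $\omega(Q) \geq c\alpha$ must be promoted to the pointwise bi-Lipschitz inequality $d(f(x), f(y)) \geq \alpha |x-y|$. This is a selection step: iteratively trim subsets on which pairwise collapse persists, each trimmed piece either contributing a small image to the garbage set (and being absorbed into the Step 1 estimate) or being fed back into the stopping machinery at a finer scale.

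The principal obstacle is precisely this upgrade from cube-scale non-compression to pointwise bi-Lipschitz control in an \emph{arbitrary} metric target. Since Rademacher differentiation is unavailable for maps into general $X$, one cannot linearize $f$ and extract a rank-$d$ direction as in the $\RR^k$-valued case. The natural substitute is a quantitative differentiation principle for metric-valued Lipschitz maps --- concretely, Schul's $\beta$-number / Traveling-Salesman machinery --- which shows that along any dyadic chain the oscillation of $f$ can be genuinely ``non-projective'' at only boundedly many scales, the bound depending only on $\alpha$ and $d$. Each such scale contributes at most a constant factor to the count $M$, so a bounded iteration exhausts $G$ and produces the finite family $E_1, \dots, E_M$ on which $f$ is $\alpha^{-1}$-bi-Lipschitz.
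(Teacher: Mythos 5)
The paper does not prove Theorem~\ref{thm:lipbilip}; it is quoted from \cite{Sc09} and used as a black box, so there is no in-paper proof to compare against. Evaluating your proposal on its own terms: the first step is sound. If $\mathcal S$ is the family of \emph{maximal} dyadic cubes with $\diam f(Q)/\diam Q < c\alpha$, these cubes are pairwise disjoint, $\sum_{Q\in\mathcal S}(\diam Q)^d \lesssim_d \sum_{Q\in\mathcal S}|Q|\le 1$, and hence $\HH^d_\infty(f(\bigcup\mathcal S))\lesssim_d (c\alpha)^d\le L(d)\alpha$, as you assert.

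The gap is in the second step, and it is the entire content of the theorem. Non-compression at every dyadic scale does not rule out \emph{folding}: the tent map $t\mapsto |t-\tfrac12|$ on $[0,1]$ has $\diam f(Q)\ge \tfrac12\diam Q$ for every dyadic $Q$, so no cube is compressing, yet $f(\tfrac14)=f(\tfrac34)$ and the map is $\alpha^{-1}$-bi-Lipschitz on no set meeting both halves. Your proposed remedy --- partition $G$ into $N_0(\alpha,d)$ regions of side comparable to a power of $\alpha$ and then ``iteratively trim'' --- cannot work as stated, because the location of the folds is a property of $f$, not of $\alpha$; a fixed grid of boxes can straddle arbitrarily many fold loci, and nothing in the proposal bounds the number of trimming steps or the content of the cumulative trimmings. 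What is missing is a Carleson-type packing estimate showing that, per point, only boundedly many scales can carry a ``fold,'' together with a coding that turns that packing bound into a partition into $M(\alpha,d)$ pieces. Your invocation of the Traveling-Salesman / $\beta$-number machinery is the wrong tool: $\beta$-numbers quantify one-dimensional flatness of \emph{subsets} and do not directly control a $d$-dimensional map into a metric target. Schul's actual argument in \cite{Sc09} proceeds via a multi-scale corona/coding construction of exactly the type recorded in Lemma~\ref{lem:coding} of this paper (and attributed there to \cite[p.~8]{Sc09}), combined with the additional bookkeeping needed because the garbage is measured by the content of its \emph{image} rather than of the set itself; that bookkeeping is absent from the proposal.
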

(In fact, \cite[Theorem 1.1]{Sc09} is slightly stronger than what we have stated here.) Note that here the smallness of the garbage set $Q_0 \setminus \cup E_i$ is measured by the $d$-dimensional Hausdorff content $\HH^d_\infty$ of its image, rather than $d$-dimensional Hausdorff measure. See subsection \ref{subsec:hausdorff} for these definitions.

In a different direction, which we will not pursue here, one may consider the existence of such decompositions of Lipschitz mappings when the domain is a more general, non-Euclidean, metric space. We point the reader to \cite{Meyerson, GCD16, Li, LLR, GCDKin} for more on this interesting area.

Theorem \ref{thm:lipbilip} gives a complete answer to the question of finding good decompositions of Lipschitz mappings from $[0,1]^d$, in the case when the image has positive $d$-dimensional Hausdorff measure.

The question of finding quantitative decompositions of Lipschitz mappings becomes more difficult when the image dimension is smaller than the domain dimension, e.g., for mappings $f\colon \RR^3 \rightarrow \RR^2$. Here, one cannot of course expect any bi-Lipschitz behavior in the mapping on a set of positive $3$-dimensional measure. (While Theorem \ref{thm:lipbilip} applies, it is vacuous.) 

A natural question in this dimension-lowering setting is whether $f$ can be decomposed, quantitatively, into pieces on which it ``looks like'' a projection mapping. In \cite{AS12}, Azzam and Schul give a necessary and sufficient condition for mapping to admit a large piece on which it ``looks like'' a projection from $\RR^{n+m}$ to $\RR^n$. (But see Remark \ref{rmk:defdifferences} below.)

Their condition involves the following notion. 
\begin{definition}\label{def:mappingcontent}
Let $E\subseteq Q_0 = [0,1]^{n+m}$ be a set. We define the ``$(n,m)$-mapping content'' of $f$ on $E$  (or just the ``mapping content'' if $n,m$ are understood) as
$$\HH^{n,m}_\infty(f,E):= \inf \sum_{Q_i} \HH^n_\infty(f(Q_i))\side(Q_i)^m,$$
where the infimum is taken over all coverings $\{Q_i\}$ of $E$ by dyadic cubes with disjoint interiors in $Q_0$.
\end{definition}
As discussed in \cite{AS12}, $\HH^{n,m}_\infty$ serves in some sense as a ``coarse'' substitute for the $L^1$-norm of the Jacobian of $f$. 

What is meant by ``looking like a projection'' is encapsulated by the notion of a ``Hard Sard pair'' for a mapping $f$ defined on $Q_0$. This pair consists of a set $E$ and a globally bi-Lipschitz change of coordinates, such that, in these coordinates, $f|_E$ becomes a mapping that is constant on ``vertical'' $m$-planes and bi-Lipschitz on ``horizontal'' $n$-planes. (The name is adapted from the title of \cite{AS12}, and is meant to evoke quantitative analogs of Sard's theorem.)

\begin{definition}\label{def:HSpair}

Let $E\subseteq Q_0=[0,1]^{n+m}$ and $g:E\rightarrow \RR^{n+m}$ be a bi-Lipschitz mapping. We call $(E,g)$ a \textbf{Hard Sard pair for $f$} if there is a constant $C_{Lip}$ such that the following conditions hold.

Write $\RR^{n+m}=\RR^n \times \RR^m$ in the standard way, and points of $\RR^{n+m}$ as $(x,y)$ with $x\in \RR^n$ and $y\in \RR^m$. Let $F = f \circ g^{-1}$.

We ask that:
\begin{enumerate}[(i)]
\item\label{HS2} $g$ extends to a globally $C_{Lip}$-bi-Lipschitz homeomorphism from $\RR^{n+m}$ to $\RR^{n+m}$.
\item\label{HS3} If $(x,y)$ and $(x',y')$ are in $g(E)$, then $F(x,y) = F(x',y')$ if and only if $x=x'$. Equivalently,
$$ F^{-1}(F(x,y)) \cap g(E)) = (\{x\} \times \RR^m) \cap g(E)$$
\item\label{HS4} The map
$$(x,y) \mapsto (F(x,y),y)$$
is $C_{Lip}$-bi-Lipschitz on the set $g(E)$. In particular, for all $y\in \RR^m$, the restriction
$$F|_{(\RR^n\times\{y\}) \cap g(E)}$$
is $C_{Lip}$-bi-Lipschitz.
\end{enumerate}

If $E\subseteq Q_0$ is a set and there exists a mapping $g\colon \RR^{n+m}\rightarrow \RR^{n+m}$ satisfying \eqref{HS2}-\eqref{HS4} for $E$, then we call $E$ a \textbf{Hard Sard set for $f$}.
\end{definition}
We think of $g$ as a globally bi-Lipshitz change of coordinates that ``straightens out'' the fibers of $f|_E$. Conditions \eqref{HS3} and \eqref{HS4} say, to quote \cite{AS12}, that ``inside $g(E)$, $F$ is independent of $y$, and for fixed $y$, the function $F$ is bi-Lipschitz in $x$.'' 

Observe that the linear projection mapping $\pi(x,y)=x$ on $Q_0$ satisfies all the properties requested for the map $F=f\circ g^{-1}$ on $g(E)$ in Definition \ref{def:HSpair}. Thus, we interpret Definition \ref{def:HSpair} as saying that $f$ ``looks like a projection'' when restricted to $E$, up to globally a bi-Lipschitz change of coordinates $g$.

As a small note, it is clear that if $(E,g)$ is a Hard Sard pair for a Lipschitz map $f$, then so is $(\overline{E},g)$, so a Hard Sard set can always be taken compact.

Azzam and Schul prove the following in \cite{AS12}.
\begin{theorem}[Theorem I of \cite{AS12}]\label{thm:HS}
Let $Q_0$ be the unit cube of $\RR^{n+m}$. Suppose that $f\colon Q_0 \rightarrow X$ is a $1$-Lipschitz function into a metric space,
$$ 0 < \HH^n(f(Q_0)) \leq 1,$$
and
$$ 0 < \delta \leq \HH^{n,m}_\infty(f,Q_0).$$

Then there are constants $C_{Lip}>1$ and $\eta>0$, depending only on $n$, $m$, and $\delta$, such that there is a Hard Sard pair $(E,g)$ for $f$ in $Q_0$ with
$$ \HH^{n+m}(E) \geq \eta > 0.$$
\end{theorem}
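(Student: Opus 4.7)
The plan uses three ingredients: quantitative differentiation to find a good scale, Theorem~\ref{thm:lipbilip} to produce fiberwise bi-Lipschitz pieces, and a careful construction of a global bi-Lipschitz change of coordinates supplementing the ``horizontal'' data of $f$ with the ``vertical'' coordinate.

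First, I would use a quantitative differentiation argument (stopping-time, Jones-type) combined with the hypothesis $\mathcal{H}^{n,m}_\infty(f,Q_0)\geq \delta$ to extract a sub-cube $Q\subseteq Q_0$ on which $f$ is within $\epsilon\cdot\side(Q)$ (in sup norm, using a Kuratowski embedding of $X$ into a Banach space) of an affine map $L:\mathbb{R}^{n+m}\to X$, with $\epsilon$ as small as we like relative to $\delta,n,m$. A pigeonhole argument against the covering definition also produces a localized lower bound $\mathcal{H}^{n,m}_\infty(f,Q)\gtrsim \delta\cdot\side(Q)^{n+m}$. This localized bound forces $L$ to have rank exactly $n$: if the rank were smaller, then $f(Q)$ would lie $\epsilon\cdot \side(Q)$-close to a lower-dimensional affine set, making $\mathcal{H}^n_\infty(f(Q))=o(\side(Q)^n)$ and contradicting the local content bound via the trivial covering of $Q$ by itself. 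Thus $L=T\circ \pi_V$ where $V\subseteq \mathbb{R}^{n+m}$ is an $n$-plane and $T:V\to X$ is bi-Lipschitz with constants depending only on $n,m,\delta$.

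After a linear change of coordinates on $\mathbb{R}^{n+m}$ (to be absorbed into the final $g$) and a rescaling of $Q$ to the unit cube, I may assume $V=\mathbb{R}^n\times \{0\}$ and $L(x,y)=T(x)$. Post-composing $f$ with a bi-Lipschitz inverse of $T$ yields an $\mathbb{R}^n$-valued map $F$ on $Q$ satisfying $\|F-\pi\|_\infty\leq C\epsilon$, where $\pi(x,y):=x$. A Fubini argument combined with the mapping content bound shows that most horizontal slices $S_y:=(\mathbb{R}^n\times \{y\})\cap Q$ satisfy that $F|_{S_y}$ has image of nearly full $n$-dimensional Hausdorff content. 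Theorem~\ref{thm:lipbilip} applied on each such slice supplies large (in $\mathcal{H}^n$) subsets on which $F$, and hence $f$, is bi-Lipschitz in the horizontal directions. Fubini again produces a set $E\subseteq Q$ of positive $(n+m)$-measure possessing this fiberwise bi-Lipschitz property.

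The main step, and the main obstacle, is constructing the global bi-Lipschitz change of coordinates $g:\mathbb{R}^{n+m}\to \mathbb{R}^{n+m}$ straightening the fibers of $f|_E$. The natural candidate on $Q$ is $g(x,y):=(F(x,y),y)$, extended to $\mathbb{R}^{n+m}$ by a Whitney-type extension. The upper Lipschitz bound is automatic. For the lower bi-Lipschitz bound, I split into two regimes: when $|y-y'|\gtrsim |x-x'|$ the preserved $y$-coordinate of $g$ dominates; when $|x-x'|\gg |y-y'|$, the estimate $\|F-\pi\|_\infty\leq C\epsilon$ together with the horizontal bi-Lipschitz bound on $E$ yields $|F(x,y)-F(x',y')|\gtrsim |x-x'|$. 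Conditions~\eqref{HS2}--\eqref{HS4} of Definition~\ref{def:HSpair} then follow by construction, after a slight shrinking of $E$ to discard exceptional sets. The delicate point---precisely the ``supplementing Lipschitz mappings by additional coordinates to form bi-Lipschitz mappings'' flagged in the abstract---is verifying that the bi-Lipschitz property persists under the extension to all of $\mathbb{R}^{n+m}$ while keeping $\mathcal{H}^{n+m}(E)\geq \eta$.
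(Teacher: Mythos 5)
This statement (Theorem~\ref{thm:HS}) is actually quoted from \cite{AS12}; the present paper does not reprove it directly, but its Propositions~\ref{prop:directional} and \ref{prop:hbilip} (hence Theorem~\ref{thm:hardersarder}) imply it. Judged against that machinery and against the Azzam--Schul proof, your proposal follows the right general outline but has a genuine gap at the step you yourself flag as ``the main step.''

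The problem is the lower bi-Lipschitz bound for $g(x,y)=(F(x,y),y)$ on $E$ at \emph{small} scales. Your case analysis covers (a) $|y-y'|\gtrsim|x-x'|$ via the preserved $y$-coordinate, and (b) $|x-x'|\gg|y-y'|$ via $\|F-\pi\|_\infty\leq C\epsilon$ plus the horizontal bi-Lipschitz bound. But the sup-norm estimate $\|F-\pi\|_\infty\leq C\epsilon$ only controls $|F(x,y)-F(x',y')|$ from below when $|x-x'|\gtrsim\epsilon$; for nearby points with $|x-x'|\lesssim\epsilon$ you must invoke the fiberwise bi-Lipschitz property, and there the argument breaks: the horizontal bound gives $|F(x,y)-F(x',y)|\gtrsim|x-x'|$ only when \emph{both} $(x,y)$ and $(x',y)$ lie in $E$, but for a generic pair $(x,y),(x',y')\in E$ with $y\neq y'$ there is no reason that the ``corrected'' point $(x',y)$ lies in $E$ --- $E$ was built by Fubini from slice-by-slice pieces and has no product structure. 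So the estimate $|F(x,y)-F(x',y')|\gtrsim|x-x'|$ is simply unestablished at scales below $\epsilon$, and these scales cannot be discarded, since a Hard Sard pair requires a \emph{global} bi-Lipschitz bound.

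What is missing is precisely the multi-scale control on the metric derivative that the paper (and \cite{AS12}) builds in. A single scale where $\md_f(C_0Q)<\epsilon$ tells you nothing about how $f$ oscillates inside $Q$ at much smaller scales, and the single-scale closeness to a rank-$n$ seminorm (your affine $L$) does not rule out $f$ ``recompressing'' at finer scales. The paper handles this via the coding lemma (Lemma~\ref{lem:coding}, built on Theorem~\ref{thm:quantdiff}): points are grouped into classes $A_p$ so that any $x,y$ in the same class have a minimal containing cube with $\md$ controlled, and one works only with pairs in a single class; the seminorm approximation then yields the bi-Lipschitz lower bound for $(f\circ\phi^{-1},\pi_{P_0^\perp})$ directly at every scale (see the end of Section~\ref{sec:hbilip}), supplemented by the stopping-time on ``good directions'' $P_Q$ to handle the plane switching. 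Separately, the global extension of the straightening map to $\RR^{n+m}$ is accomplished not by Whitney extension but by Azzam--Schul's bi-Lipschitz extension theorem (Theorem~\ref{thm:ASextension}/Corollary~\ref{cor:bilipextension}), which you would also need to invoke. A smaller point: your claim that ``a Fubini argument combined with the mapping content bound'' shows most slices have large image content is not the right reasoning --- the correct and simpler reason is topological (a degree argument: $F|_{S_y}$ is a continuous $\epsilon$-perturbation of the identity on an $n$-cube, so its image contains the shrunk cube); the mapping content is instead what guarantees a good starting cube exists and is not what controls individual slices.
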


\begin{remark}
Due to our slightly stronger Definition \ref{def:HSpair}, the version of Theorem \ref{thm:HS} stated above is not quite what is stated in \cite{AS12}, though it is what is proven there. See Remark \ref{rmk:defdifferences} for further discussion.
\end{remark}

As explained in \cite{AS12}, Theorem \ref{thm:HS} can be viewed as a ``quantitative implicit function theorem'' for Lipschitz maps into metric spaces. In rough terms, Theorem \ref{thm:HS} says that if $\HH^{n,m}_\infty(f,Q_0)>0$ and $0<\HH^n(f(Q_0))\leq 1$, then there is a large set on which $f$ looks like a projection (up to globally bi-Lipschitz change of coordinates). In \cite[Corollary 1.4]{AS12} (see also Lemma \ref{lem:HScontent} below), Azzam and Schul observe that the condition $\HH^{n,m}_\infty(f,Q_0)>0$ is also necessary, in a quantitative sense, for the conclusion of the theorem to hold. (But see Remark \ref{rmk:defdifferences} for some discussion of this point.)

We note that a ``qualitative'' version of Theorem \ref{thm:HS} was recently proven by Haj\l asz-Zimmerman in \cite{HZ}.

The condition $\HH^{n,m}_\infty(f,Q_0)>0$ appearing in Theorem \ref{thm:HS} is quite subtle, as the following result of Kaufman shows.
\begin{theorem}[Kaufman \cite{Ka}]\label{thm:Kaufman}
There is a surjective $C^1$ mapping $g\colon [0,1]^3\rightarrow [0,1]^2$ whose derivative has rank $1$ everywhere. 
In particular, $\HH^{2,1}_\infty(g, [0,1]^3)=0$ even though $\HH^{2}(g([0,1]^3))>0$.
\end{theorem}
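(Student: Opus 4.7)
The plan is to construct $g$ directly, in the spirit of Kaufman's original argument in \cite{Ka}, and then derive the mapping-content bound from the rank condition via a tube-covering computation.

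\textbf{Construction of $g$.} I would build $g$ as the uniform limit of a sequence of $C^1$ maps $g_n:[0,1]^3 \to [0,1]^2$ of rank at most one. At stage $n$, fix a collection of smooth arcs $\gamma_{n,1},\dots,\gamma_{n,N_n} : [0,1] \to [0,1]^2$ whose union is an $\epsilon_n$-net in $[0,1]^2$ with $\epsilon_n \to 0$, decompose $[0,1]^3$ into regions $R_{n,i}$, and on each $R_{n,i}$ set $g_n(x) = \gamma_{n,i}(\pi_{n,i}(x))$, where $\pi_{n,i}:[0,1]^3 \to \RR$ is $C^1$ with nonvanishing gradient; this automatically makes $Dg_n$ of rank one on the interior of each region. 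The passage from $g_n$ to $g_{n+1}$ is done in narrow layers between regions, using bump functions to glue the new arcs to the old ones, with parameters chosen so that $\sum_n \|g_{n+1}-g_n\|_{C^1} < \infty$. Then $g = \lim g_n$ is $C^1$; the rank-one condition is inherited by the limit (rank $\leq 1$ is closed on continuous matrix fields, and $Dg$ does not vanish because $g$ continues to spread along the arcs at every point), and $g([0,1]^3)$ is closed and dense in $[0,1]^2$, hence equal to $[0,1]^2$.

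\textbf{Mapping-content estimate.} Let $L = \Lip(g)$ and fix $\epsilon > 0$. Since $Dg$ is continuous with rank one, uniform continuity yields $\delta = \delta(\epsilon)>0$ such that for every cube $Q \subseteq [0,1]^3$ of side $\leq \delta$ and every $p\in Q$, $g(Q)$ lies in the $\epsilon\side(Q)$-neighborhood of the affine line through $g(p)$ in the direction of the range of $Dg_p$. Since $\diam g(Q) \leq L \side(Q)$, the set $g(Q)$ fits inside a tube of length $\leq 2L\side(Q)$ and radius $\epsilon\side(Q)$, which can be covered by $\lesssim L/\epsilon$ balls of radius $\epsilon\side(Q)$; hence $\HH^2_\infty(g(Q)) \lesssim L\epsilon\, \side(Q)^2$. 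Summing over any dyadic tiling $\{Q_i\}$ of $[0,1]^3$ by cubes of common side $\leq \delta$,
\[ \sum_i \HH^2_\infty(g(Q_i))\,\side(Q_i) \lesssim L\epsilon \sum_i \side(Q_i)^3 = L\epsilon, \]
so $\HH^{2,1}_\infty(g, [0,1]^3) \leq L\epsilon$; letting $\epsilon\to 0$ gives $\HH^{2,1}_\infty(g, [0,1]^3) = 0$. Surjectivity gives $\HH^2(g([0,1]^3)) = \HH^2([0,1]^2) > 0$.

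\textbf{Main obstacle.} The subtle part is the inductive construction: the successive approximations must make $g_n([0,1]^3)$ denser in $[0,1]^2$ while the $C^1$ corrections remain summable and the pointwise rank-one condition is preserved. The borderline regularity is essential, since a $C^2$ rank-one map $[0,1]^3\to[0,1]^2$ would satisfy Sard's theorem and have image of $\HH^2$-measure zero, so the construction must live exactly at the $C^1$ level.
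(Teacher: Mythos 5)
For the mapping-content estimate (the ``in particular'' clause), your argument is correct and takes a genuinely different route from the paper. The paper does not argue this directly: it derives $\HH^{2,1}_\infty(g,[0,1]^3)=0$ by invoking Proposition \ref{prop:Jacobian} (quoted from Haj\l asz--Zimmerman \cite{HZ}), which bounds $\HH^{n,m}_\infty(f,Q_0)$ by the integral of the lower density $\Theta_*^n(f,\cdot)$ and identifies this density with the Jacobian $|J^nf|$ when the target is Euclidean; since $Dg$ has rank $1$, $J^2g\equiv 0$ and the bound gives $0$. Your direct argument -- uniform continuity of $Dg$, the elementary observation that on small cubes $g(Q)$ lies in a thin tube around an affine line, and the resulting $\HH^2_\infty(g(Q))\lesssim L\epsilon\,\side(Q)^2$ -- is a self-contained $C^1$ version of the same phenomenon that does not pass through the general lower-density machinery. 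It is more elementary, at the cost of using the $C^1$ hypothesis rather than applying to general Lipschitz maps. Both are valid; the paper's route is more uniform with its other uses of $\Theta_*^n$.

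For the first part (existence of $g$), the paper simply cites Kaufman \cite{Ka} and does not reproduce the construction. Your sketch of the inductive construction is suggestive but has a real gap, which you yourself flag: you do not actually show how to make the $C^1$ corrections summable while preserving rank exactly one at the glued interfaces and simultaneously driving the image to be dense. In particular, ``$Dg$ does not vanish because $g$ continues to spread along the arcs at every point'' is not an argument -- the rank-$\leq 1$ condition passes to the $C^1$ limit (as you note, via closedness of the determinantal variety), but the nonvanishing of $Dg$ in the limit is precisely where Kaufman's construction requires care, and it does not follow from the closedness argument. Since the paper treats this as a black box from \cite{Ka}, this gap is not a discrepancy with the paper, but the sketch should not be presented as a proof of the first sentence of the theorem.
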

The second statement in Theorem \ref{thm:Kaufman} follows from Proposition \ref{prop:Jacobian} below and was not part of \cite{Ka}. 
Note that the mapping $g$ in Theorem \ref{thm:Kaufman} could not be $C^2$, by Sard's theorem. Further discussion of Kaufman's theorem and its relatives appears in subsection \ref{subsec:questions} below.

\subsection{New Results}

After proving Theorem \ref{thm:HS}, Azzam and Schul asked if the result could be pushed further: Beyond guaranteeing a set of a certain size on which the map looks like a projection, can we provide a quantitative decomposition of the domain $Q_0$ into finitely many pieces on which the map looks like a projection, combined with a garbage set which is small in some sense? Observe that Theorem \ref{thm:lipbilip} has this property: it not only guarantees a single nice set where good behavior happens, it guarantees a quantitative exhaustion of the domain by such sets, up to a small garbage set.

To be more specific, Azzam and Schul asked the following:

\begin{question}[\cite{AS12}, Remark 6.15]\label{q:hardersarder}
Let $Q_0$ be the unit cube in $\RR^{n+m}$ and let $f\colon Q_0\rightarrow X$ be a $1$-Lipschitz map into a metric space $X$ with
$$ \HH^n(f(Q_0)) \leq 1.$$
Given $\gamma>0$, do there exist constants $M$ and $C_{Lip}$, depending only on $n$, $m$, and $\gamma$, and Hard Sard pairs $(E_1,g_1), \dots, (E_M,g_M)$ such that
\begin{equation}\label{eq:garbageset}
 \HH^{n,m}_\infty(f, Q_0\setminus \cup E_i) < \gamma ?
\end{equation}
\end{question}
A more tempting requirement, to replace \eqref{eq:garbageset}, would be to ask that
$$ \HH^{n}_\infty(f(Q_0\setminus \cup E_i)) < \gamma,$$
but Kaufman's example in Theorem \ref{thm:Kaufman} shows that this is not in general achievable.

Here, we answer Question \ref{q:hardersarder} in the affirmative. This gives a quantitative decomposition of Lipschitz mappings into a general metric space, up to set with small ``mapping content''. The following is the main result of this paper.

\begin{maintheorem}\label{thm:hardersarder}
Let $Q_0$ be the unit cube in $\RR^{n+m}$ and let $f\colon Q_0\rightarrow X$ be a $1$-Lipschitz map into a metric space $X$ with
$$ \HH^n(f(Q_0)) \leq 1.$$

Given any $\gamma>0$, we can write
$$ Q_0 = E_1 \cup \dots \cup E_M \cup G,$$
where $E_i$ are Hard Sard sets and
$$ \HH^{n,m}_\infty(f,G) < \gamma.$$
The constant $M$ and the constants $C_{Lip}$ associated to the Hard Sard pairs $(E_i,g_i)$ depend only on $n$, $m$, and $\gamma$.
\end{maintheorem}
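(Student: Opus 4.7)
The natural strategy is to iterate a localized version of Theorem \ref{thm:HS} via a stopping-time decomposition on dyadic subcubes of $Q_0$. First I would establish a scaled local Hard Sard statement: for any dyadic cube $Q \subset Q_0$, if $\mathcal{H}^{n,m}_\infty(f, Q) \geq \delta \cdot \side(Q)^{n+m}$, then there exists a Hard Sard pair $(E,g)$ for $f$ with $E \subset Q$, $\mathcal{H}^{n+m}(E) \geq \eta(n,m,\delta) \cdot \side(Q)^{n+m}$, and $C_{Lip}$ depending only on $n, m, \delta$. This should follow from Theorem \ref{thm:HS} by affinely rescaling $Q$ to $Q_0$ and rescaling the target metric by $\side(Q)^{-1}$; the mapping content transforms homogeneously of degree $n+m$ under such a rescaling and bi-Lipschitz constants are scale-invariant.

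Next I would run a recursion on dyadic cubes with parameters $\delta$ (threshold), $\epsilon$ (density tolerance), and $K$ (maximum depth), all to be chosen in terms of $\gamma$. For a cube $Q$ being processed: if $\mathcal{H}^{n,m}_\infty(f, Q) < \delta \side(Q)^{n+m}$, declare $Q$ garbage and stop; otherwise apply the localized Hard Sard to obtain $E_Q$, record $(E_Q, g_Q)$ in the output list, and recurse on those dyadic children $R$ of $Q$ with $|R \cap E_Q|/|R| < 1-\epsilon$. Children ``nearly contained'' in $E_Q$ are absorbed into the Hard Sard portion, and their slivers $R \setminus E_Q$ are placed in garbage. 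The recursion terminates at depth $K$, with all still-unprocessed cubes placed in garbage as well.

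The key analytic input is the crude bound $\mathcal{H}^{n,m}_\infty(f, A) \lesssim \mathcal{H}^{n+m}(A)$ for Lipschitz $f$, proved by covering $A$ by dyadic cubes and using $\mathcal{H}^n_\infty(f(Q)) \lesssim \side(Q)^n$. The sparse-vs-dense child dichotomy forces the total volume $V_k$ of cubes processed at depth $k$ to contract as $V_{k+1} \leq (1-\eta') V_k$ for some $\eta' = \eta'(\eta,\epsilon) > 0$. The garbage then decomposes into three parts: bad-cube contributions of mapping content $\lesssim \delta \sum_k V_k \lesssim \delta/\eta'$ (subadditivity); depth-$K$ leftovers of mapping content $\lesssim (1-\eta')^K$; and sliver contributions of mapping content $\lesssim \epsilon/\eta'$. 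Choosing $\delta$ and $\epsilon$ small and $K \gtrsim \log(1/\gamma)$, all depending only on $n, m, \gamma$, yields $\mathcal{H}^{n,m}_\infty(f,G) < \gamma$. The count $M$ is bounded by the total number of processed cubes, at most $2^{K(n+m)}$.

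I expect the main obstacle to be the localized Hard Sard step: arranging that the global bi-Lipschitz extension $g \colon \mathbb{R}^{n+m} \to \mathbb{R}^{n+m}$ and the asymmetric graph condition \eqref{HS4} of Definition \ref{def:HSpair} transfer cleanly under rescaling with $C_{Lip}$ uniform in $\side(Q)$. The differing roles of $x$ and $y$ in that condition mean a naive rescaling can distort them non-uniformly, so producing bi-Lipschitz pairs that glue into a valid \emph{global} change of coordinates on cubes at many scales simultaneously is subtle. Handling this is presumably where the ``quantitative differentiation'' and the systematic study of ``supplementing Lipschitz maps by additional coordinates'' advertised in the abstract enter, providing a more robust construction of the straightening maps than one obtains by directly invoking Theorem \ref{thm:HS}.
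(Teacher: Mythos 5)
The central step in your proposal --- a ``localized Hard Sard'' statement obtained from Theorem \ref{thm:HS} by rescaling --- does not go through, and the authors explicitly anticipate and rule out this strategy at the start of the subsection ``The two main propositions in the proof of Theorem \ref{thm:hardersarder}.'' The obstruction is the hypothesis $\HH^n(f(Q_0)) \leq 1$. When you rescale a dyadic cube $Q$ of side $s$ to the unit cube and rescale the target metric by $s^{-1}$ so that the rescaled map is again $1$-Lipschitz, the image measure becomes $s^{-n}\HH^n(f(Q))$; to invoke Theorem \ref{thm:HS} you therefore need $\HH^n(f(Q)) \leq s^n$. This is \emph{not} a consequence of $\HH^n(f(Q_0)) \leq 1$ for maps into a general metric space: although $\diam f(Q) \lesssim s$, a set of small diameter in a general metric space can have arbitrarily large $\HH^n$ measure, so $\HH^n(f(Q))$ can be as large as $1$ even when $s$ is tiny (one can plant a rescaled copy of the ``hedgehog'' map from Section \ref{sec:example} inside $Q$). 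In effect, the localized statement you posit --- Hard Sard pair of relative measure $\geq \eta(n,m,\delta)$ whenever the local density $\HH^{n,m}_\infty(f,Q)/\side(Q)^{n+m}$ is $\geq \delta$, with no control on $\HH^n(f(Q))/\side(Q)^n$ --- would be Theorem \ref{thm:HS} with the $\HH^n$ hypothesis stripped away, and Proposition \ref{prop:example} is precisely designed to show that this cannot hold with uniform constants. The renormalization you'd need (rescale the target so that $\HH^n = 1$ instead of by $s^{-1}$) ruins the lower bound on the mapping content, so there is no way to recover Theorem HS's $\delta$ uniformly. This is the real obstacle, not the asymmetry of condition \eqref{HS4}.

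A secondary issue is the volume contraction. Writing $a$ for the total volume of not-recursed children and $b$ for the recursed ones, the estimate $\eta|Q|\leq |E_Q| \leq a+(1-\epsilon)b = |Q|-\epsilon b$ gives only $b \leq \frac{1-\eta}{\epsilon}|Q|$. This is a genuine contraction only if $\epsilon > 1-\eta$, which is incompatible with taking $\epsilon$ small; and if $\epsilon$ is close to $1$, the sliver sets $R\setminus E_Q$ for absorbed children can occupy nearly all of $R$, so the sliver garbage is not controllable by $\epsilon$. The bookkeeping as sketched does not close.

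The paper's argument avoids iterating Theorem \ref{thm:HS} entirely. Proposition \ref{prop:hbilip}, proved from quantitative differentiation (Theorem \ref{thm:quantdiff}), approximating seminorms, and a stopping-time plus packing argument on ``good'' cubes, produces a bounded family of sets $F_i$ and bi-Lipschitz changes of variable $\phi_i$ so that $(x,y)\mapsto(f(\phi_i^{-1}(x,y)),y)$ is bi-Lipschitz on $\phi_i(F_i)$, with complement of small $\HH^{n,m}_\infty$. Proposition \ref{prop:directional} then converts each such set into a bounded family of Hard Sard sets plus small-$\HH^{n+m}$-measure garbage, via the Cauchy--Schwarz/Fubini estimate of Lemma \ref{lem:612} and the bi-Lipschitz extension Corollary \ref{cor:bilipextension}. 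Crucially, the hypothesis $\HH^n(f(Q_0))\leq 1$ enters only once, globally, in Proposition \ref{prop:directional}; nothing is ever rescaled to small cubes. Your closing intuition that ``supplementing Lipschitz maps by additional coordinates'' is the real mechanism is correct, but in the paper it \emph{replaces} the iteration of Theorem \ref{thm:HS} rather than repairing it.
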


Theorem \ref{thm:hardersarder} implies Theorem \ref{thm:HS}, but a number of the techniques used by Azzam-Schul in \cite{AS12} also appear in some form in our proof. Theorem \ref{thm:hardersarder} is new even in the case $X=\RR^n$.

\begin{remark}\label{rmk:necessary}
It is natural to ask whether the assumptions in Theorem \ref{thm:hardersarder} are necessary. In Lemma \ref{lem:HScontent}, which is a slight reworking of \cite[Corollary 1.4]{AS12}, we observe that the condition $\HH^{n,m}_\infty(f, Q_0)>0$ is quantitatively necessary in order for $f$ to admit any Hard Sard pair.

With a bit more effort, in Proposition \ref{prop:example}, we also show by an explicit construction that the condition $\HH^n(f(Q_0))\leq 1$ is also necessary for Theorem \ref{thm:hardersarder} to hold (with quantitative control).
\end{remark}

A natural question, given Theorems \ref{thm:HS}, \ref{thm:hardersarder}, and \ref{thm:Kaufman}, is: Under what conditions can one guarantee positivity of $\HH^{n,m}_\infty(f,Q_0)$, and hence a Hard Sard set for $f$? Theorem \ref{thm:Kaufman}, as well as the the construction in Section 2 of \cite{AS12}, indicates that this is not an easy question, and we discuss it further in subsection \ref{subsec:questions}. We give a simple quantitative condition in the case $n=1$:

\begin{maintheorem}\label{thm:onedim}
Let $m$ be a non-negative integer and $\alpha>0$. Then there is an $\eta>0$, depending only on $m$ and $\alpha$, with the following property:
If $f:[0,1]^{1+m}\rightarrow X$ is $1$-Lipschitz and $\diam f([0,1]^{1+m})\geq \alpha$, then $\HH^{1,m}_\infty(f,[0,1]^{1+m})\geq \eta.$ 
\end{maintheorem}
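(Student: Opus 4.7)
The plan is to bound $\HH^{1,m}_\infty(f,[0,1]^{1+m})$ from below by integrating $1$-Hausdorff content over axis-parallel fibers. The key special feature of the $n=1$ case, which I would exploit, is that for a connected set $E$ in a metric space one has $\HH^1_\infty(E)\geq \diam(E)$, so fiber content is controlled by a single real-valued quantity per fiber, namely its diameter.

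First I would show that for any cover $\{Q_i\}$ of $[0,1]^{1+m}$ by dyadic cubes with disjoint interiors and any axis-parallel direction $j\in\{1,\dots,1+m\}$, a Fubini-type argument yields
$$\sum_i \HH^1_\infty(f(Q_i))\,\side(Q_i)^m \;\geq\; \int_{[0,1]^m} \HH^1_\infty\bigl(f(I^{(j)}_y)\bigr)\,dy,$$
where $I^{(j)}_y \subset [0,1]^{1+m}$ is the line segment in direction $j$ with other coordinates $y$; this uses that cubes meeting $I^{(j)}_y$ cover it and that the projection of $Q_i$ onto the other $m$ coordinates has measure $\side(Q_i)^m$. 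Since each $f(I^{(j)}_y)$ is connected (a continuous image of an interval), $\HH^1_\infty(f(I^{(j)}_y))\geq \diam f(I^{(j)}_y) =: \psi_j(y)$, and taking the infimum over covers gives
$$\HH^{1,m}_\infty(f,[0,1]^{1+m}) \;\geq\; \int_{[0,1]^m}\psi_j(y)\,dy \quad\text{for every } j.$$

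Next I would produce one fiber with large image diameter. Picking $p,q\in [0,1]^{1+m}$ with $d(f(p),f(q))\geq \alpha$ and joining them by the piecewise axis-parallel path $p = p^{(0)}, p^{(1)}, \dots, p^{(1+m)} = q$ in which $p^{(j)}$ differs from $p^{(j-1)}$ only in coordinate $j$, the triangle inequality yields some direction $j^*$ and some $y^*\in[0,1]^m$ (the common other-coordinates of $p^{(j^*-1)}$ and $p^{(j^*)}$) with $\psi_{j^*}(y^*)\geq \alpha/(1+m)$. A routine triangle-inequality computation shows each $\psi_j$ is $2$-Lipschitz on $[0,1]^m$, so $\psi_{j^*}(y)\geq \alpha/(2(1+m))$ on the closed ball $B(y^*, r)$ with $r := \alpha/(4(1+m))$, and $B(y^*, r)\cap [0,1]^m$ has Lebesgue measure at least $c_m r^m$ since $y^*\in[0,1]^m$. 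Combining with the previous display yields
$$\HH^{1,m}_\infty(f,[0,1]^{1+m}) \;\geq\; \frac{\alpha}{2(1+m)} \cdot c_m r^m \;=:\; \eta(m,\alpha),$$
which is positive and proportional to $\alpha^{m+1}$.

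I do not expect any serious obstacle; the mildest care needed is the bookkeeping in the axis-parallel path argument, confirming that each intermediate other-coordinates point $y^{(j)}$ lies in $[0,1]^m$. The crucial ingredient $\HH^1_\infty \geq \diam$ on connected sets is where the hypothesis $n=1$ is essential: no analogous estimate holds in higher Hausdorff dimensions, which is consistent with Kaufman's example (Theorem~\ref{thm:Kaufman}) ruling out such a clean conclusion for $n\geq 2$.
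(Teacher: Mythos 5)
Your proposal is correct, and it takes a genuinely different — and arguably cleaner — route than the paper. The paper argues by contraposition: starting from $\HH^{1,m}_\infty(f,Q_0)<\eta$, it discards the small-measure set $A$ of ``bad'' cubes (those where $\diam f(Q)\gg\side(Q)$), uses Fubini to find a $\left(\frac{\eta}{\delta s}\right)^{1/m}$-dense family of axis-parallel segments whose intersection with $A$ is small, shows $f$ has small oscillation on each such segment, and chains these segments together to bound $\diam f(Q_0)\lesssim\eta^{1/(m+2)}$. Your argument works directly from the definition: you observe that for any admissible dyadic cover and any axis direction $j$, integrating subadditivity of $\HH^1_\infty$ over fibers and using the fact that each cube's projection has measure $\side(Q_i)^m$ yields $\HH^{1,m}_\infty(f,Q_0)\geq\int_{[0,1]^m}\psi_j(y)\,dy$, where $\psi_j(y)=\diam f(I^{(j)}_y)$ (the bound $\HH^1_\infty(E)\geq\diam(E)$ for connected $E$ is exactly the $n=1$ leverage, as you note, and the paper also invokes it). Both proofs use a piecewise axis-parallel path, but you use it only to produce a single good fiber, and then the $2$-Lipschitz continuity of $\psi_j$ converts this into a lower bound on the integral. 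Your approach gives the quantitatively stronger bound $\eta\gtrsim_m\alpha^{m+1}$ versus the paper's $\eta\approx\alpha^{m+2}$, avoids introducing the auxiliary parameters $\delta,s$, and sidesteps the density/covering bookkeeping of the paper's argument; the paper's approach is closer in spirit to the stopping-time techniques used elsewhere in the paper. One minor point you wave at but don't verify: $r=\alpha/(4(1+m))<1$ always holds (since $\alpha\leq\diam Q_0=\sqrt{1+m}$), so the estimate $|B(y^*,r)\cap[0,1]^m|\geq c_m r^m$ is legitimate; you should also make the subadditivity step over the fiber explicit (the cubes meeting $I^{(j)}_y$ cover it for a.e.\ $y$, so $\psi_j(y)\leq\sum_{i:Q_i\cap I^{(j)}_y\neq\emptyset}\HH^1_\infty(f(Q_i))$ before integrating), but these are routine.
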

In particular, suppose that $f$ satisfies the assumptions of Theorem \ref{thm:onedim} and $\HH^{1}(f([0,1]^{1+m}))\leq 1$. Then, using either Theorem \ref{thm:HS} or Theorem \ref{thm:hardersarder}, $f$ is guaranteed a Hard Sard set whose $\HH^{1+m}$-measure is bounded below depending only on the diameter of the image of $f$.

\begin{remark}\label{rmk:defdifferences}
We take this opportunity to remark on some small differences between the statements of \cite{AS12} and our statements.

Definition \ref{def:HSpair} actually contains two strengthenings of the conclusions in \cite[Theorem I]{AS12}.

First of all, the analog of Condition \eqref{HS3} in \cite[Theorem I]{AS12} states only a containment
$$ F^{-1}(F(x,y)) \cap g(E)) \subseteq (\{x\} \times \RR^m) \cap g(E),$$
where as Definition \ref{def:HSpair} requires an equality of these fibers.

Second of all, the analog of Condition \eqref{HS4} in \cite[Theorem I]{AS12} contains only the statement that each restriction
$$F|_{(\RR^n\times\{y\}) \cap g(E)}$$
is $C_{Lip}$-bi-Lipschitz, and not the stronger condition that
$$(x,y) \mapsto (F(x,y),y)$$
is $C_{Lip}$-bi-Lipschitz on the set $g(E)$.

Thus, our statement of Theorem \ref{thm:HS}, which is \cite[Theorem I]{AS12}, is slightly stronger than the one given in \cite{AS12}. In fact, the stronger version is actually achieved by the proof in \cite{AS12}, though not stated explicitly. In any case, our proof of Theorem \ref{thm:hardersarder} will use the stronger version of Definition \ref{def:HSpair} given above, and does not rely on Theorem \ref{thm:HS}.

The importance of the strengthened Definition \ref{def:HSpair} lies in Lemma \ref{lem:HScontent} and Proposition \ref{prop:example}. (In fact, the proof of the analog of Lemma \ref{lem:HScontent} in \cite[Corollary 1.4]{AS12} is incorrect as written, and actually requires our strengthened Condition \eqref{HS4} of Definition \ref{def:HSpair}.)
\end{remark}

\subsection{The two main propositions in the proof of Theorem \ref{thm:hardersarder}}
Theorem \ref{thm:hardersarder} cannot be proven by a naive iteration of Theorem \ref{thm:HS}. Theorem \ref{thm:HS} is a statement about the structure of maps on the unit cube (and its proof heavily reflects this). If one uses it to generate a large Hard Sard set $E_1\subseteq Q_0$, one cannot apply it again to generate another large Hard Sard set $E_2$ in the complement of $E_1$. Of course, one can locate small cubes in the complement of $E_1$ and apply a suitably rescaled Theorem \ref{thm:HS} on each of those, but then one apparently gives up all quantitative control on the size and number of these sets.

Thus, the proof of Theorem \ref{thm:hardersarder} occupies the main part of the paper, from Section \ref{sec:directional} through Section \ref{sec:mainproof}. The theorem essentially follows from two main preliminary results that may be of independent interest, and which we now describe.

For the remainder of this discussion, we fix the unit cube $Q_0\subseteq \RR^{n+m}$. As in Definition \ref{def:HSpair}, we write $\RR^{n+m}=\RR^n\times\RR^m$ and points of $\RR^{n+m}$ as $(x,y)$ where $x\in\RR^n$ and $y\in\RR^m$.

The first main step in the proof of Theorem \ref{thm:hardersarder} is to show that finding Hard Sard sets for a mapping $f$ can be reduced to a different problem: that of ``supplementing'' $f$ by a linear projection in a way that yields a bi-Lipschitz map. 

\begin{mainproposition}\label{prop:directional}

Let $F\subseteq Q_0$ be a Borel set and $f:F\rightarrow X$ a $1$-Lipschitz mapping into a metric space. Let $\alpha>0$ and $L\geq 1$.

Assume that 
$$\HH^n(f(\overline{F})) \leq 1$$
and that the map $h:F\rightarrow X \times [0,1]^m$ defined by 
\begin{equation}\label{eq:hdef}
h(x,y) = (f(x,y),y)
\end{equation}
is $L$-bi-Lipschitz on $F$.

Then we can write
$$ F = E_1 \cup \dots \cup E_M \cup G,$$
where $E_i$ are Hard Sard sets for $f$ and
$$ \HH^{n+m}(G) < \alpha.$$

Moreover, the bi-Lipschitz maps $g_i$ associated to each Hard Sard set $E_i$ in this decomposition are ``shears'' on $E_i$, in the sense that there are Lipschitz maps $\psi_i:E_i \rightarrow \RR^n$ such that
\begin{equation}\label{eq:gshear}
 g_i(x,y) = (\psi_i(x,y),y) \text{ for all } (x,y)\in E_i.
\end{equation}

The constant $M$ and the constants $C_{Lip}$ associated to the Hard Sard sets $E_i$ depend only on $n$, $m$, $L$, and $\alpha$.
\end{mainproposition}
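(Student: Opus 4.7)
My plan is the following. Because Definition~\ref{def:HSpair} is applied here with the extra ``shear'' constraint \eqref{eq:gshear}, finding a Hard Sard pair $(E,g)$ with $g$ of this form reduces to producing a Lipschitz function $\psi:E\to\RR^n$ such that (i) the map $(x,y)\mapsto(\psi(x,y),y)$ extends from $E$ to a globally bi-Lipschitz self-map of $\RR^{n+m}$, and (ii) on $E$, $\psi(x,y)=\psi(x',y')$ if and only if $f(x,y)=f(x',y')$. Since $h$ is $L$-bi-Lipschitz, $f$ is $L$-bi-Lipschitz on each horizontal slice of $F$, and two points $(x,y),(x',y')\in F$ with $f(x,y)=f(x',y')$ satisfy $|x-x'|\leq L|y-y'|$, so the fibers of $f$ in $F$ are $L$-Lipschitz graphs over subsets of the $y$-axis.

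The first main step is to construct, on a Borel piece $E\subseteq F$ of quantitatively large $\HH^{n+m}$-measure, a bi-Lipschitz embedding $\Phi:f(E)\to\RR^n$, and then set $\psi:=\Phi\circ f$ on $E$; property (ii) is then immediate, and the slice-wise $L$-bi-Lipschitz behavior of $f$ transfers automatically to slice-wise bi-Lipschitz behavior of $\psi$ with quantitative constants. To construct $\Phi$, I would exploit that $\HH^n(f(\overline F))\leq 1$ together with the Lipschitz-graph structure of the fibers: picking a reference slice $\{y=y^\ast\}$ that carries a substantial portion of $F$, use $f$ restricted to that slice (which is $L$-bi-Lipschitz into $X$) as a chart for $f(F)$ near $y^\ast$, and transport this chart to nearby fibers via their graph description. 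A Fubini/slicing argument, combined with an application of Theorem~\ref{thm:lipbilip} to a suitable McShane extension of the reference-slice chart to a Lipschitz map on $Q_0$, should supply the required lower bound on $\HH^{n+m}(E)$.

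The second step, which I expect to be the main obstacle, is verifying property (i), i.e.\ extending $\psi$ from $E$ so that the associated shear has a globally bi-Lipschitz extension to $\RR^{n+m}$: bi-Lipschitz extension is substantially more delicate than Lipschitz extension in general. My plan is to arrange, already at the stage of defining $\psi$, that $\psi(x,y) = x + \tau(x,y)$ for a $\tau$ of small Lipschitz constant---achievable by first passing to small dyadic cubes on which the fibers of $f$ are nearly vertical and by choosing $\Phi$ so that $\psi$ agrees with the identity on a reference slice---so that a McShane extension of $\tau$ to $\RR^{n+m}$ automatically produces a global bi-Lipschitz map of the shear form. Finally, once a single Hard Sard pair with $\HH^{n+m}(E_1)\geq\eta(n,m,L)>0$ is in hand, the hypotheses of the proposition pass to Borel subsets of $F$, so the construction iterates on $F\setminus(E_1\cup\cdots\cup E_k)$; after at most $M\leq\lceil 1/\eta\rceil$ iterations the remaining set $G$ has $\HH^{n+m}(G)<\alpha$, which yields the desired decomposition with the stated quantitative bounds.
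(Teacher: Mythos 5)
Your outline captures the correct backbone of the argument---find a good reference slice $\{y=y^\ast\}$, use $f$ restricted to that slice as a chart, set $\psi = (\text{chart})\circ f$ to get the shear $g(x,y)=(\psi(x,y),y)$, verify the Hard Sard conditions, and iterate to exhaust $F$. The paper proceeds exactly this way, setting $p^1 = \pi_{\RR^n}\circ(f|_{F_{y'}})^{-1}$ and $g^1(x,y)=(p^1(f(x,y)),y)$ on $E^1 = F\cap f^{-1}(f(F_{y'}))$. However, two of the concrete steps you propose do not work, and the second one is a real gap.

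First, the lower bound $\HH^{n+m}(E)\gtrsim 1$ (with constants depending on $n,m,L,\alpha$). Your ``Fubini/slicing argument'' is the right instinct, but Theorem~\ref{thm:lipbilip} is not the relevant tool: the chart $p^1$ is already bi-Lipschitz, so there is nothing to decompose, and $\tilde p\circ f$ (for a McShane extension $\tilde p$) maps $\RR^{n+m}\to\RR^n$, so Theorem~\ref{thm:lipbilip} is vacuous for it. What the paper uses instead is Lemma~\ref{lem:612}, a Cauchy--Schwarz/Fubini estimate showing that the overlap integral $\int_{[0,1]^m}\HH^n(f(F_{y'})\cap f(F_z))\,dz$ is $\gtrsim L^{-n}\HH^{n+m}(F)^2/\HH^n(f(\overline F))$ for some $y'$; this is exactly where the hypothesis $\HH^n(f(\overline F))\leq 1$ enters, and it gives the lower bound on $\HH^{n+m}(E^1)$ directly.

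Second, and more seriously, your plan for the global bi-Lipschitz extension of the shear fails. You propose to arrange $\psi(x,y)=x+\tau(x,y)$ with $\mathrm{Lip}(\tau)$ small, by ``passing to small dyadic cubes on which the fibers of $f$ are nearly vertical'' and choosing $\Phi$ so that $\psi=\mathrm{Id}$ on the reference slice. This cannot be made to work: the slope bound on the fibers ($|x-x'|\leq L|y-y'|$ whenever $f(x,y)=f(x',y')$ in $F$) is scale-invariant, so zooming into a small cube does not make the fibers any more vertical. Concretely, the Lipschitz constant of $\psi$ in the $x$-variable is controlled only by $L$ (since $\psi=p^1\circ f$ with $p^1$ an $L$-bi-Lipschitz chart), so $\mathrm{Lip}(\tau)\approx L$, which is not small when $L>1$. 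The perturbation-of-the-identity trick requires $\mathrm{Lip}(\tau)<1$; you simply cannot guarantee it here. The paper's solution is to invoke the Azzam--Schul bi-Lipschitz extension theorem (Theorem~\ref{thm:ASextension}, via Corollary~\ref{cor:bilipextension}) applied to the bi-Lipschitz map $g^1$ on $E^1$: this deep result decomposes $E^1$ into a controlled number of pieces on which $g^1$ admits a globally bi-Lipschitz extension, plus a garbage set of small measure. Without this input (or an equivalent extension device), your construction of a single Hard Sard pair does not close. The iteration step at the end of your proposal is fine in spirit, but note that the per-step gain $\eta$ depends on the current measure of $F^k$, so one must stop once $\HH^{n+m}(F^k)<\alpha'$ rather than iterating unconditionally.
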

We note that Proposition \ref{prop:directional} \textit{can} be proven by iterating certain arguments in \cite{AS12} and keeping careful track of the constants.

As a consequence of Proposition \ref{prop:directional}, proving Theorem \ref{thm:hardersarder} boils down to finding sets on which $f$ can be supplemented by a linear projection to yield a bi-Lipschitz map. It is not easy to find such sets directly, but we show that if one is willing to pre-compose $f$ by bi-Lipschitz mappings, then this can be arranged. This is the content of the second main step in the proof of Theorem \ref{thm:hardersarder}:

\begin{mainproposition}\label{prop:hbilip}
Let $f\colon Q_0 \rightarrow X$ be a $1$-Lipschitz mapping into a metric space $X$.

For each $\alpha>0$, there is a decomposition of $Q_0$ into Borel sets
$$ Q_0 = F_1 \cup F_2 \cup \dots \cup F_N \cup G$$
with the following properties:
\begin{itemize}
\item For each $i\in\{1,\dots,N\}$, there is a bi-Lipschitz map $\phi_i\colon F_i\rightarrow Q_0$ 
such that the map
\begin{equation}\label{eq:propmap}
 (x,y) \mapsto (f(\phi_i^{-1}(x,y)), y),
\end{equation}
is bi-lipschitz on $\phi_i(F_i)$, and 
\item $\HH^{n,m}_\infty(f,G) < \alpha.$
\end{itemize}
The number of sets $N$ and the bi-Lipschitz constants for $\phi_i$ and for the mappings in \eqref{eq:propmap} depend only on $\alpha$,$n$, and $m$.
\end{mainproposition}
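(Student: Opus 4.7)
The plan is to reduce the construction of the bi-Lipschitz change of coordinates $\phi_i$ to a linear-algebraic problem: on large pieces of $Q_0$, find an $m$-dimensional linear subspace $W\subseteq\RR^{n+m}$ such that $f$ supplemented by orthogonal projection onto $W$ is bi-Lipschitz. Indeed, if $\psi_W\colon\RR^{n+m}\to W$ and $P_W\colon\RR^{n+m}\to W^\perp$ denote the two orthogonal projections, then $\phi_W:=(P_W,\psi_W)$ is an isometry of $\RR^{n+m}$, so the conclusion of Proposition \ref{prop:hbilip} on a piece $F\subseteq Q_0$ is equivalent to $(f,\psi_W)$ being bi-Lipschitz on $F$ (modulo a small rescaling to guarantee $\phi_W(F)\subseteq Q_0$). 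The task becomes to decompose $Q_0$ into pieces on each of which a suitable $W$ exists, with a mapping-content-small remainder.

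To produce such a decomposition, I plan to run a stopping-time / quantitative differentiation scheme in the spirit of \cite{AS12}. Fix a small auxiliary parameter $\epsilon=\epsilon(\alpha,n,m)$. Processing dyadic subcubes $Q\subseteq Q_0$ from the top down, we stop at $Q$ whenever $f|_Q$ admits an $(\epsilon\cdot\diam Q)$-approximation $f|_Q\approx A_Q\circ\pi_Q$, where $\pi_Q$ is orthogonal projection of $\RR^{n+m}$ along some subspace $V_Q$ of dimension at most $m$, and $A_Q$ is bi-Lipschitz on $\pi_Q(Q)$ with a controlled constant. The resulting stopping cubes form a finite collection $\mathcal{F}$, which will constitute the pieces $F_i$ in the conclusion. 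For each $Q\in\mathcal{F}$ we pick an $m$-dimensional subspace $W_Q\supseteq V_Q$, set $\psi_Q$ to be orthogonal projection onto $W_Q$, and take $\phi_Q=(P_{W_Q},\psi_Q)$ (rescaled into $Q_0$). A direct computation, decomposing any displacement in $Q$ into its $V_Q$- and $V_Q^\perp$-components, shows that the approximation forces $(f,\psi_Q)$ to be bi-Lipschitz on $Q$ with constants depending only on $\epsilon$ and those of $A_Q$.

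The garbage $G$ will contain two kinds of material: never-stopped descending chains (``failure scales''), and ``rank-drop'' cubes, i.e., cubes where the only available approximation has $\dim V_Q>m$, so that $f|_Q$ is essentially $<n$-dimensional. Rank-drop cubes contribute very little mapping content because $\HH^n_\infty(f(Q))$ is correspondingly small, and so can be placed into $G$ at bounded cost. Controlling the failure-scale chains will require a Carleson-type packing estimate, in which each failure event at a cube $Q$ consumes a definite fraction of the mapping content at the scale of its parent; summing across the stopping tree then bounds both $|\mathcal{F}|$ and $\HH^{n,m}_\infty(f,G)$ by quantities depending only on $\alpha$, $n$, and $m$.

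The main obstacle is exactly this packing estimate, since the garbage bound must be in the \emph{mapping content} $\HH^{n,m}_\infty$ rather than in the Hausdorff content $\HH^n_\infty$, which by Kaufman's Theorem \ref{thm:Kaufman} would be impossible. Whereas \cite{AS12} only needed to extract one large good set, here we need the sharper statement that \emph{everything} outside $\mathcal{F}$ is mapping-content negligible. The quantitative differentiation scheme underlying the stopping rule is the technical core of the paper and will be developed in the sections preceding this proposition; once it is in hand, the remainder of the proof is the essentially deterministic construction of the $\phi_i$ and $\psi_i$ sketched above, together with the elementary linear-algebraic observation in paragraph one.
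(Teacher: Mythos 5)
There is a genuine gap at the heart of your argument, and it is precisely the difficulty that the multi-scale machinery in Sections~\ref{sec:codesplit}--\ref{sec:hbilip} is designed to overcome. Your stopping rule declares a cube $Q$ ``good'' once $f|_Q$ admits an $(\epsilon\cdot\diam Q)$-approximation by some $A_Q\circ\pi_Q$, and then takes the piece $F_i$ to be (essentially) $Q$ with $\phi_i$ a \emph{single} linear rotation $\phi_{W_Q}$. But a single-scale approximation with additive error $\epsilon\cdot\diam Q$ carries no information at scales smaller than $\epsilon\cdot\diam Q$: for $p,q\in Q$ with $|p-q|\ll\epsilon\cdot\diam Q$ the error term dominates, and the lower bi-Lipschitz bound for $(f,\psi_{W_Q})$ simply cannot be extracted from it. The ``good direction'' of $f$ may rotate as you descend to smaller scales inside $Q$, so no fixed $m$-plane $W_Q$ can work across all scales of a stopping cube. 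This is exactly why the paper's map $\phi$ is \emph{not} a single rotation but a composition $\phi=\phi^0\circ\phi^1\circ\cdots\circ\phi^\ell$ of rotations-in-place at a controlled, nested sequence of scales (the ``clockwork'' of Section~\ref{sec:hbilip}), built from a stopping time that records when the good coordinate $n$-plane $P_Q$ switches (Section~\ref{sec:stoppingtime}). The substance of the proof is then to bound (i) the number of switch levels, via the Fubini-type packing estimate of Lemma~\ref{lem:packed}, (ii) the overlap of same-scale rotations, via the $\Lambda$-separation of Lemma~\ref{lem:splitting}, and (iii) the cubes where no good coordinate $n$-plane exists, via the compressed-cube content bound of Lemma~\ref{lem:compressedcontent}; none of these steps appears in your outline, and the first two are what make the number $N$ of pieces bounded independently of $f$.

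Two further remarks. Your ``rank-drop cubes'' are indeed analogous to the paper's $\QQ_{\text{compressed}}$, but the assertion that they contribute little mapping content requires the linear-algebra input of Lemmas~\ref{lem:coordplane1}--\ref{lem:coordplane} (if $f$ is compressed along \emph{every} coordinate $n$-plane then it is compressed along an $(m{+}1)$-plane, forcing $\HH^n_\infty(f(Q))$ small); merely saying ``$f|_Q$ is essentially $<n$-dimensional'' is a conclusion, not an argument. Also, your Carleson-style claim that ``each failure event consumes a definite fraction of the mapping content at the parent's scale'' does not hold for arbitrary Lipschitz $f$ into a metric space; the mapping content $\HH^{n,m}_\infty(f,\cdot)$ is subadditive but not an honest measure, and the paper's packing (Lemmas~\ref{lem:packedfubini} and~\ref{lem:packed}) is run in $\HH^{n+m}$ on a \emph{fixed} top cube, relying on the good-plane lower bound \eqref{eq:PQdef} rather than on any direct Carleson condition in $\HH^{n,m}_\infty$.
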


\begin{remark}
Unlike Theorem \ref{thm:hardersarder} and Proposition \ref{prop:directional}, Proposition \ref{prop:hbilip} does not require any assumption on the $\HH^n$-measure of the image of $f$, and thus has potentially wider applicability. See, for example, Corollary \ref{cor:arbcontent}.
\end{remark}


\begin{remark}
In an earlier version of this paper, we overlooked that Proposition \ref{prop:hbilip} is closely related to ideas of David and Semmes in \cite{DSregular}. David and Semmes study a sub-class of Lipschitz mappings between Euclidean spaces known as ``$(s,t)$-regular mappings''. In \cite[Theorem 6.1]{DSregular}, they prove that such a mapping admits a single large piece on which it can be supplemented by a linear mapping to become bi-Lipschitz. Furthermore, in \cite[Section 10]{DSregular}, they give a somewhat informal outline of a method to supplement such mappings by more general ``weakly Lipschitz'' mappings to obtain a ``weakly bi-Lipschitz'' mapping on the whole domain. (We do not use this terminology, so we defer the definitions to that paper.) This is closely related to the decomposition result in Proposition \ref{prop:hbilip}, and implies something quite similar for regular mappings.

Our Proposition \ref{prop:hbilip} applies to general Lipschitz mappings (not only regular mappings), which requires bringing in the notion of mapping content, and it allows the target of the mapping to be an arbitrary metric space, rather than a Euclidean space. Proposition \ref{prop:hbilip} can therefore be viewed as simultaneously extending to a new setting and filling in all the details of the informal outline in \cite[Section 10]{DSregular}. (For the regular mappings studied in \cite{DSregular}, the mapping content is not necessary and the garbage set can be controlled by simpler quantities.)

Our proof is also somewhat different; in particular, we do not use Carleson's Corona construction. That said, the reader will certainly notice many similarities between our approach to Proposition \ref{prop:hbilip} and the ideas in \cite[Section 10]{DSregular}, and it was a serious oversight on our part to overlook this.
\end{remark}

It should now not be difficult to believe that Propositions \ref{prop:directional} and \ref{prop:hbilip} combine to prove Theorem \ref{thm:hardersarder}, and we provide the details in Section \ref{sec:mainproof}.

\subsection{Additional questions and relation to quantitative topology}\label{subsec:questions}
We conclude this introduction by connecting our results to some recent developments in quantitative topology, and stating a few questions. 

\subsubsection{When is mapping content small?}
Many of our questions are specific instances of the following general question: 
\begin{question}\label{q:general}
What can be said about the map $f$ if $\HH^{n,m}_\infty(f,Q_0)=0$? If $\HH^{n,m}_\infty(f,Q_0)$ is small?
\end{question}
Let us discuss the status of this question in the first few cases:

\begin{itemize}
\item If $n=0$ in Question \ref{q:general}, then $\HH^{n,m}_\infty(f,Q_0)$ is simply (comparable to) the $m$-dimensional Hausdorff content of $Q_0$, and thus can never be small.

\item If $m=0$, then $\HH^{n,m}_\infty(f,Q_0) \approx \HH^n_\infty(f(Q_0))$ (see Lemma \ref{lem:areaboundscontent}), so Question \ref{q:general} can be answered by saying that the mapping has small image in dimension $n$.

\item If $n=1$ in Question \ref{q:general}, then Theorem \ref{thm:onedim} implies that $\diam(f(Q_0))$ is zero or small, i.e., that $f$ must be constant or near constant. This completely answers Question \ref{q:general} in that case.
\end{itemize}
Thus, the first unanswered case of Question \ref{q:general} is the case $n=2$, $m=1$. In this scenario, Kaufman's construction (Theorem \ref{thm:Kaufman}) already shows that no such simple statement is available, as it yields a highly non-trivial mapping $g$ with $\HH^{2,1}_\infty(g,Q_0)=0$. A class of related examples, mapping into general metric spaces, was given is discussed in  \cite[Section 2.2]{AS12}. Kaufman's example and Azzam-Schul's examples have the property that the mappings involved \textit{factor through trees}.

For us, a \textit{tree} is a compact, geodesic metric space $T$ such that every two points in $T$ are the endpoints of a unique arc in $T$. We say that a Lipschitz mapping $f\colon Q_0 \rightarrow X$ \textit{factors through a tree} if there is a tree $T$ and Lipschitz maps $g\colon Q_0 \rightarrow T$ and $h\colon T \rightarrow X$ such that $f = h\circ g$. (A number of recent papers consider this notion; see \cite{WY, Zust}.)

For the first unanswered case of Question \ref{q:general} (the case $n=2$ and $m=1$), we pose the following conjecture.

\begin{conjecture}\label{conj:21}
Let $Q_0=[0,1]^3$ and let $f\colon Q_0 \rightarrow X$ be a $1$-Lipschitz mapping into a metric space. Assume without loss of generality that $X\subseteq \ell_\infty$.
\begin{itemize}
\item (Qualitative version) If $\HH^{2,1}_\infty(f,Q_0)=0$, then $f$ factors through a tree.

\item (Quantitative version) For every $\epsilon>0$, there is a $\delta=\delta(\epsilon)$ with the following property: If $\HH^{2,1}_\infty(f,Q_0)<\delta$, then there is a $1$-Lipschitz map $g\colon Q_0 \rightarrow \ell_\infty$ that factors through a tree and satisfies $\|g-f\|_\infty<\epsilon$.
\end{itemize}

\end{conjecture}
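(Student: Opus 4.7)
The plan is to prove the quantitative version first and then derive the qualitative version as a consequence. For the reduction, applying the quantitative version with $\epsilon_k = 1/k$ produces $1$-Lipschitz maps $g_k\colon Q_0 \to \ell_\infty$ that factor through trees $T_k$ with $\|g_k - f\|_\infty < 1/k$, so that $g_k \to f$ uniformly. Since the $T_k$ have uniformly bounded diameter (as images of a $1$-Lipschitz map on a compact domain), I would pass to a pointed Gromov--Hausdorff subsequential limit tree $T$ (using that the class of compact geodesic trees of bounded diameter is closed under GH limits) and verify that the limits of the factorizing maps exhibit $f$ as factoring through $T$.

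For the quantitative statement, fix $\epsilon > 0$ and a small $\delta = \delta(\epsilon)$ to be chosen. By hypothesis there is a dyadic covering $\{Q_i\}_{i \in I}$ of $Q_0$ with $\sum_i \HH^2_\infty(f(Q_i))\,\side(Q_i) < \delta$. A standard stopping-time preprocessing arranges that on each $Q_i$ one has $\HH^2_\infty(f(Q_i)) \leq \tau\,\side(Q_i)^2$ for a threshold $\tau = \tau(\epsilon)$, with the cubes violating this bound contributing a controlled fraction of $\delta$ and being absorbed into later stages. On each such $Q_i$, the image $f(Q_i)$ is connected (continuous image of a connected set), of diameter at most $\sqrt{n+m}\,\side(Q_i)$, and has $2$-content small compared to $\side(Q_i)^2$. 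The key technical lemma I would aim for is that such a set admits a Hausdorff approximation by a finite subtree $T_i \subset \ell_\infty$ with error $\lesssim \sqrt{\tau}\,\side(Q_i)$, proved by a Reifenberg-type spanning-tree construction on a well-spread net of $f(Q_i)$, where smallness of the $2$-content keeps the number of independent cycles that must be broken in the net-graph under control.

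Having the local trees, I would assemble a global tree $T$ by generations. Initialize $T$ as a point and, at generation $k$, attach each new $T_i$ (with $\side(Q_i) = 2^{-k}$) by gluing along $T \cap T_i$ and destroying any cycles this introduces by collapsing them along the unique geodesic in the previous generation's tree. The map $g\colon Q_0 \to T$ is defined using a partition of unity subordinate to $\{Q_i\}$ and interpolating the nearest-point projections of $f$ onto the relevant $T_i$'s, and $h\colon T \to \ell_\infty$ is the inclusion. The approximation error $\|h\circ g - f\|_\infty$ is then estimated cube by cube, telescoping against the mapping-content sum $\sum_i \HH^2_\infty(f(Q_i))\,\side(Q_i)$, and the Lipschitz constants of $g$ and $h$ are controlled by the choice of the threshold $\tau$.

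The main obstacle will be the cycle-collapsing step and the tree-approximation lemma for local images. When neighboring cubes $Q_i, Q_j$ meet, the naive union $T_i \cup T_j$ can contain cycles of diameter up to $\side(Q_i) + \side(Q_j)$, and collapsing them while keeping Lipschitz distortion bounded and the cumulative displacement small is the crux. This is precisely where the $\side(Q_i)$ weight in the definition of $\HH^{2,1}_\infty$ is essential: it penalizes cubes at exactly the rate at which cycles at that scale need to be collapsed, so that the total collapsing cost is bounded by $\sum_i \HH^2_\infty(f(Q_i))\,\side(Q_i) < \delta$. A secondary difficulty is the tree-approximation lemma for connected metric sets of small $2$-content; for general metric targets it is not obvious, and I would likely reduce to the Banach case by embedding $X \subset \ell_\infty$, or prove it intrinsically via a multiscale net argument where the $2$-content bound limits the Betti number of successive metric nerves.
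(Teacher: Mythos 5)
This statement is \textbf{Conjecture \ref{conj:21}}, which the paper explicitly poses as an open problem; the paper offers no proof of either the qualitative or quantitative version, only remarks that the techniques of Wenger--Young \cite{WY} may be relevant. There is therefore no proof in the paper for you to be compared against, and a correct resolution would be new mathematics, not a restatement of something already in the text.

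As for your sketch: the overall architecture (stopping-time decomposition, local tree approximation, multiscale gluing, Gromov--Hausdorff limit to pass from the quantitative to the qualitative version) is a reasonable research program, but as written it contains several gaps that you yourself flag and that are not merely technical. (1) The \emph{tree-approximation lemma} --- that a Lipschitz image $f(Q_i)$ with $\HH^2_\infty(f(Q_i)) \le \tau\,\side(Q_i)^2$ is within Hausdorff distance $\lesssim \sqrt{\tau}\,\side(Q_i)$ of a finite tree --- is asserted, not proved, and is itself a nontrivial Reifenberg-type statement; no argument is given that ``smallness of the 2-content keeps the number of independent cycles \ldots under control,'' nor is it explained why Betti number control yields the Hausdorff-distance rate you need. (2) The \emph{assembly step} is precisely the crux, as you acknowledge: when you glue $T_i$ to the previously built tree and ``destroy cycles by collapsing along the unique geodesic,'' you must show that the cumulative displacement of points under this collapsing is summable and that the resulting map remains Lipschitz with controlled constant. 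The heuristic ``the $\side(Q_i)$ weight penalizes cubes at exactly the rate at which cycles need to be collapsed'' is not an estimate --- cycles in $f(Q_i)$ can have diameter comparable to $\side(Q_i)$ while $\HH^2_\infty(f(Q_i))\side(Q_i)$ is tiny, so it is not clear the displacement is controlled by the mapping-content sum. (3) The conjecture asks for a $1$-Lipschitz $g$, but the partition-of-unity/nearest-point-projection construction you describe will generically degrade the Lipschitz constant, and no argument is offered to recover $1$-Lipschitz. Until these three points are addressed, what you have is an outline of a plausible attack, not a proof.
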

As far as we know, the analog of Conjecture \ref{conj:21} may hold even if $n=2$ and $m$ is allowed to be arbitrary. The techniques in \cite{WY} may be relevant here. 

For still higher dimensions $n$, one might hope that Question \ref{q:general} could be answered by reference to some type of quantitative topological non-degeneracy of the mapping in dimension $n$. Here the picture appears to be significantly more complicated and it is unclear (at least to the present authors) what conjectures to make. The following theorem of Wenger-Young indicates this. 

\begin{theorem}[\cite{WY}, Theorem 2]\label{thm:WY}
Let $n\leq n+m-1 < 2n-3$. Then any Lipschitz map
$$ f: \partial [0,1]^{n+m} \rightarrow \partial [0,1]^n$$
can be extended to a Lipschitz map
$$ \hat{f}: [0,1]^{n+m} \rightarrow \RR^n$$
whose derivative has rank $\leq n-1$ almost everywhere. 
\end{theorem}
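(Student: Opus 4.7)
The plan is to build $\hat f$ so that its image is contained in a fixed $(n-1)$-dimensional polyhedron $P\subset\RR^n$. Once this is achieved, since $P$ has zero $n$-dimensional Lebesgue measure, the coarea formula forces $J_n\hat f=0$ almost everywhere on $[0,1]^{n+m}$, which is exactly the rank bound claimed (as $J_n\hat f(x)=0$ if and only if $D\hat f(x)$ has rank at most $n-1$).

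First I would fix a fine triangulation $K$ of $[0,1]^{n+m}$ and a fine simplicial subdivision $P$ of $\partial[0,1]^n$ (which is automatically $(n-1)$-dimensional). After a small Lipschitz perturbation, absorbed into a thin collar of $\partial[0,1]^{n+m}$, one may approximate $f$ by a Lipschitz simplicial map $f_0\colon \partial K \to P$ with $\Lip(f_0)\lesssim \Lip(f)$. The goal is then to extend $f_0$ inductively across the skeleta of $K$, keeping the image inside $P$ and preserving uniform control on the Lipschitz constant.

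The inductive step reads as follows: given a Lipschitz map on $K^{(k)} \cup \partial K$ taking values in $P$, extend it across each $(k+1)$-cell $\sigma$, which amounts to producing a Lipschitz filling $\sigma\cong D^{k+1}\to P$ of the boundary map $\partial\sigma\cong S^k\to P$, with Lipschitz constant controlled in terms of that of the boundary data. For $k+1\le n-1$ this is essentially automatic from standard Lipschitz extension theorems, since $\sigma$ is low-dimensional relative to $P$; the real content is the case $n\le k+1\le n+m$. Here one needs a Lipschitz filling inequality inside $P$: every Lipschitz $k$-cycle in $P$ must bound a Lipschitz $(k+1)$-chain in $P$ with controlled mass and Lipschitz constant. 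This is precisely where the hypothesis $n+m-1 < 2n-3$ enters, since it places the relevant filling dimensions $k\le n+m-1$ inside the metastable range in which a Federer--Fleming / Almgren / Wenger-style cut-and-project argument can push fillings onto the $(n-1)$-skeleton of a grid inside $P$ without topological obstruction and with uniformly bounded constants.

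The main obstacle, and the technical heart of the Wenger--Young argument, is establishing exactly this scale-invariant Lipschitz filling inequality in the $(n-1)$-dimensional polyhedron $P$: one must take a naive filling chain obtained in $\RR^n$, chop it into pieces of controlled mass, and project each piece onto the $(n-1)$-skeleton of a suitable grid, all while verifying that the Lipschitz constant does not blow up under this surgery. Once this filling inequality is in hand, the cell-by-cell extension across $K$ produces a Lipschitz map $\hat f\colon [0,1]^{n+m}\to P\subset\RR^n$ extending $f$, and the desired a.e.\ rank bound follows immediately from the coarea formula as described above.
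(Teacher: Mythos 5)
The paper does not prove this theorem; it is Theorem 2 of Wenger--Young, cited and restated (with a change of indices, as the paper notes) without proof. So the question is whether your outline is a correct argument on its own terms.

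It is not. The core plan---build $\hat f$ with image in a fixed $(n-1)$-dimensional polyhedron $P$, taken to be a subdivision of $\partial[0,1]^n$---is topologically impossible in general. Such a $P$ is homeomorphic to $S^{n-1}$. If $\hat f\colon [0,1]^{n+m}\to P$ extends $f$, then since $[0,1]^{n+m}$ is contractible, $\hat f$ is nullhomotopic in $P$, hence so is its boundary restriction $f$; that is, $[f]=0$ in $\pi_{n+m-1}(S^{n-1})$. But the theorem makes no such assumption, and under the stated hypotheses the group $\pi_{n+m-1}(S^{n-1})$ is typically nonzero. For instance $n=4,m=1$ satisfies $4\le4<5$, and $\pi_4(S^3)=\mathbb{Z}/2$: a Lipschitz representative of the nonzero class admits no continuous extension into $P$. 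The metastable range condition $n+m-1<2n-3$ does not help here: by Freudenthal it places $\pi_{n+m-1}(S^{n-1})$ in the stable range, where the groups equal the stable homotopy groups of spheres $\pi^s_m$, and these need not vanish ($\pi^s_1=\pi^s_2=\mathbb{Z}/2$, $\pi^s_3=\mathbb{Z}/24$, and so on).

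This is exactly where your appeal to Federer--Fleming/Almgren/Wenger ``without topological obstruction'' breaks down: those deformation theorems operate on chains or currents and push mass onto skeleta, but they cannot make a non-nullhomotopic sphere in $P$ bound in $P$. The actual Wenger--Young construction necessarily produces extensions whose image is \emph{not} contained in any fixed $(n-1)$-dimensional set. The rank bound is instead obtained by a Kaufman-type mechanism (compare Theorem~\ref{thm:Kaufman} in this paper): the map is made locally constant in a direction that varies from point to point and from scale to scale, so that the image may have positive $\HH^n$-measure even though $D\hat f$ has rank $\le n-1$ a.e. That is a genuinely different idea from confining the image to a thin set, and your argument would need to be rebuilt around it.
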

(We have modified the original statement slightly to fit our framework: Given $n,m$, our statement corresponds to setting Wenger-Young's $n$ as our $n-1$, their $k$ as our $n-1+m$, and making the usual Lipschitz identifications between balls and cubes.) As a concrete choice in Theorem \ref{thm:WY}, one may take $n=4$ and $m=1$.

In particular, the map $\hat{f}$ in Theorem \ref{thm:WY} will always have $\HH^{n,m}_\infty(f,[0,1]^{n+m})=0$ (by Proposition \ref{prop:Jacobian} below), but if $f$ is chosen correctly then $\hat{f}$ will not be able to factor through a tree for topological reasons. For further discussion of the topology behind Theorem \ref{thm:WY}, and more recent developments, we refer the reader to \cite{WY, GHP}.

\subsubsection{Alternative versions of mapping content}

In the definitions of Hausdorff content and Hausdorff measure, it does not much matter whether one allows covers by balls, as we do above, or by dyadic cubes, or by arbitrary sets. This affects the definition only up to dimensional constants, as an easy computation shows.

One can ask the same question about mapping content $\HH^{n,m}_\infty$: could one get an equivalent quantity allowing covers by arbitrary sets rather than only dyadic cubes? To be more specific, if $f\colon Q_0 \rightarrow X$ is a mapping and $A\subseteq Q_0$, let
\begin{equation}\label{eq:arbdef}
\hat{\HH}^{n,m}_\infty(f,A) = \inf \sum_i \HH^{n}_\infty(f(S_i))\diam(S_i)^m,
\end{equation}
where the infimum is over all countable covers $\{S_i\}$ of $A$ by \textit{arbitrary} subsets of $Q_0$. It is clear that
\begin{equation}\label{eq:arbcontent}
\hat{\HH}^{n,m}_\infty(f,A) \lesssim_{n,m} \HH^{n,m}_\infty(f,A),
\end{equation}
because dyadic cubes $Q_i$ are admissible sets, and the diameter of a dyadic cube is comparable to its side length. (In particular, the statement of Theorem \ref{thm:hardersarder} is \textit{a priori} stronger for using $\HH^{n,m}_\infty$ than it would be if it used $\hat{\HH}^{n,m}_\infty$.)

However, a similar bound in the reverse direction does not seem to be easy to show. If one is given a good cover of $A$ by arbitrary sets $S_i$ that almost achieve the infimum in \eqref{eq:arbdef}, then one can certainly cover each $S_i$ by dyadic cubes $Q_{i,j}$ to get an admissible cover for $\HH^{n,m}_\infty(f,A)$. However, since the cubes $Q_{i,j}$ may contain points outside of $\cup_i S_i$, and the images $f(Q_{i,j})$ may overlap, it does not seem to be clear how to control
$$\sum \HH^{n,m}_\infty(f(Q_{i,j})) \side(Q_{i,j})^m$$
by the sum over $S_i$ in \eqref{eq:arbdef}.

As a short \textit{corollary} of one of our main results (Proposition \ref{prop:hbilip}), we show that the two versions of mapping content are related in a weak sense for $1$-Lipschitz mappings into metric spaces. In particular, they vanish simultaneously, with some quantitative control. 

Fix $n$ and $m$, and let $Q_0$ denote the unit cube of $\RR^{n+m}$.

\begin{maincor}\label{cor:arbcontent}
For each $\delta>0$, there is a $\delta'>0$ with the following property:

If $f:Q_0\rightarrow X$ is a $1$-Lipschitz mapping into a metric space, and $A\subseteq Q_0$ has
$$ \HH^{n,m}_\infty(f,A) \geq \delta,$$
then
$$ \hat{\HH}^{n,m}_\infty(f,A) \geq \delta'.$$
The number $\delta'$ depends only on $\delta$, $n$, and $m$.
\end{maincor}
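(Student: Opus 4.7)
The plan is to combine Proposition \ref{prop:hbilip} with the observation that, on pieces where $f$ can be supplemented to a bi-Lipschitz map by appending coordinates, both $\HH^{n,m}_\infty(f,\cdot)$ and $\hat{\HH}^{n,m}_\infty(f,\cdot)$ collapse, up to multiplicative constants, to the ordinary $(n{+}m)$-dimensional Hausdorff content of the underlying set. Given $\delta>0$, apply Proposition \ref{prop:hbilip} with $\alpha = \delta/2$ to obtain a decomposition $Q_0 = F_1 \cup \cdots \cup F_N \cup G$ with $\HH^{n,m}_\infty(f,G) < \delta/2$, together with bi-Lipschitz maps $\phi_i \colon F_i \to Q_0$ such that $H_i(x,y) := (f(\phi_i^{-1}(x,y)), y)$ is $L$-bi-Lipschitz on $\phi_i(F_i)$; both $N$ and $L$ depend only on $\delta$, $n$, and $m$.

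Subadditivity of $\HH^{n,m}_\infty$ (concatenate dyadic covers and discard contained cubes) and monotonicity of $\hat{\HH}^{n,m}_\infty$ (reuse the same arbitrary cover) reduce the corollary to the following key estimate: for each Borel $E \subseteq F_i$,
$$ \HH^{n,m}_\infty(f,E) \leq C \, \hat{\HH}^{n,m}_\infty(f,E), $$
with $C$ depending only on $n$, $m$, $L$. Applying this with $E = A \cap F_i$ yields $\HH^{n,m}_\infty(f,A) \leq CN \, \hat{\HH}^{n,m}_\infty(f,A) + \delta/2$, so the hypothesis $\HH^{n,m}_\infty(f,A)\geq\delta$ forces $\hat{\HH}^{n,m}_\infty(f,A) \geq \delta/(2CN) =: \delta'$, which depends only on $\delta$, $n$, $m$, as required.

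To prove the key estimate I sandwich both sides between $\HH^{n+m}_\infty(E)$. The upper bound $\HH^{n,m}_\infty(f,E) \lesssim \HH^{n+m}_\infty(E)$ is immediate from the trivial 1-Lipschitz inequality $\HH^n_\infty(f(Q)) \lesssim \side(Q)^n$ for dyadic cubes $Q$, after optimizing over dyadic covers. For the matching lower bound $\HH^{n+m}_\infty(E) \lesssim \hat{\HH}^{n,m}_\infty(f,E)$, I transfer via $\phi_i$: bi-Lipschitzness of $\phi_i$ changes both $\HH^{n+m}_\infty(\cdot)$ and $\hat{\HH}^{n,m}_\infty(f,\cdot)$ only by multiplicative constants, reducing to showing $\HH^{n+m}_\infty(\phi_i(E)) \lesssim \hat{\HH}^{n,m}_\infty(\tilde f, \phi_i(E))$, where $\tilde f = f \circ \phi_i^{-1}$. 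Given an arbitrary cover $\{S_j\}$ of $\phi_i(E)$, which I may take inside $\phi_i(F_i)$ by intersecting, the containment $H_i(S_j) \subseteq \tilde f(S_j) \times \pi_Y(S_j)$ and the $L$-bi-Lipschitz property of $H_i$ give $\HH^{n+m}_\infty(S_j) \approx \HH^{n+m}_\infty(H_i(S_j))$. A product inequality for Hausdorff content,
$$ \HH^{n+m}_\infty(A \times B) \lesssim \HH^n_\infty(A) \, \HH^m_\infty(B), $$
taken in the max metric on $X \times \RR^m$, together with $\HH^m_\infty(\pi_Y(S_j)) \lesssim \diam(S_j)^m$, then gives $\HH^{n+m}_\infty(S_j) \lesssim \HH^n_\infty(\tilde f(S_j)) \diam(S_j)^m$; summing and taking the infimum over covers closes the estimate.

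The main subtlety is the product inequality, since $\HH^{n+m}_\infty$ does not factor cleanly over products. I would prove it directly from the definitions: given near-optimal ball covers $\{B(x_k,r_k)\}$ of $A$ and $\{B(y_\ell,s_\ell)\}$ of $B$, each product box $B(x_k,r_k) \times B(y_\ell,s_\ell)$ admits a cover by product balls of radius $\min(r_k,s_\ell)$ at total cost $\lesssim r_k^n s_\ell^m$ (treat the cases $r_k \leq s_\ell$ and $s_\ell \leq r_k$ separately); summing over $(k,\ell)$ yields the claim.
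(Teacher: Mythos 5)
Your proposal is correct and follows essentially the same path as the paper: apply Proposition \ref{prop:hbilip} with $\alpha=\delta/2$, and then on each piece $F_i$ (where $(f\circ\phi_i^{-1},\pi_Y)$ is bi-Lipschitz) show that $\hat{\HH}^{n,m}_\infty$, $\HH^{n,m}_\infty$, and $\HH^{n+m}_\infty$ are all comparable — this is precisely the paper's Lemma \ref{lem:arbcontent}. Two points of difference, both cosmetic rather than substantive: (1) you close the argument by summing over all $A\cap F_i$ via subadditivity of $\HH^{n,m}_\infty$ and monotonicity of $\hat{\HH}^{n,m}_\infty$, whereas the paper singles out the one index $i_0$ for which $\HH^{n,m}_\infty(f,A\cap F_{i_0})\geq \delta/(2N)$; and (2) you package the key covering computation as a clean product inequality $\HH^{n+m}_\infty(A\times B)\lesssim \HH^n_\infty(A)\HH^m_\infty(B)$ in the max metric, proved by the case split $r_k\le s_\ell$ versus $s_\ell\le r_k$. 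The paper does the same computation but only in the regime it needs (covering $B_i^j\times\pi_{\RR^m}(A_i)$ after arranging WLOG that $\diam(B_i^j)\lesssim\diam(A_i)$), and it handles sets with $\HH^n_\infty(f(S))=0$ by a separate short claim; your product inequality absorbs that degenerate case automatically, since a vanishing factor makes the bound trivially zero. Your abstraction is a small but genuine simplification of the bookkeeping; the underlying estimate is identical.
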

We prove Corollary \ref{cor:arbcontent} in Section \ref{sec:arbcontent}. As observed above, we do not know if there is a simple direct argument that yields Corollary \ref{cor:arbcontent}, independently of the results of this paper.

It is natural to ask if there is in fact a linear relationship between these two quantities:
\begin{question}
Is there a constant $c=c_{n,m}$ such that
$$ \hat{\HH}^{n,m}_\infty(f,A) \geq c\HH^{n,m}_\infty(f,A)$$
for all $1$-Lipschitz maps $f:Q_0\rightarrow X$ into a metric space and all $A\subseteq Q_0$?
\end{question}
One could also imagine alternative versions based on covers by balls or other families of sets, and the same types of questions would apply.

\subsection{Structure of the paper}
In Section \ref{sec:prelims}, we give the basic definitions and notations in the paper. We also state some necessary theorems from \cite{AS12} and \cite{AS14} that underlie the proof of Theorem \ref{thm:hardersarder}.

In Section \ref{sec:mappingcontent}, we outline some basic properties of the ``mapping content'' $\HH^{n,m}_\infty$. 

Section \ref{sec:directional} contains the proof of Proposition \ref{prop:directional}, which builds on ideas from \cite[Section 6]{AS12} with an additional iteration scheme.

Sections \ref{sec:codesplit} and \ref{sec:compressed} provide some preliminary results needed in the proof of Proposition \ref{prop:hbilip}. The former gives two lemmas allowing the coding and splitting of families of cubes into families with various useful properties, and the latter shows how to control the mapping content of a certain family of cubes that will form part of the ``garbage set'' in Proposition \ref{prop:hbilip}.

We then prove Proposition \ref{prop:hbilip} in Section \ref{sec:hbilip}, and combine Propositions \ref{prop:directional} and \ref{prop:hbilip} to prove Theorem \ref{thm:hardersarder} in Section \ref{sec:mainproof}.

Section \ref{sec:onedim} contains the proof of Theorem \ref{thm:onedim}, which is essentially independent from the rest of the paper.

Finally, Section \ref{sec:example} contains an explicit construction (mentioned in Remark \ref{rmk:necessary}) showing that the assumption $\HH^n(f(Q_0))\leq 1$ is quantitatively necessary for Theorem \ref{thm:hardersarder} to hold, and Section \ref{sec:arbcontent} contains the proof of Corollary \ref{cor:arbcontent}.

\section{Preliminaries}\label{sec:prelims}

This section contains notation, definitions and a few fundamental results used in the rest of the paper.

\subsection{Basic metric space and mapping notions}
Throughout most of the paper, $(X,d)$ will be an arbitrary metric space, usually written $X$ if the metric is understood. If $(X,d_X)$ and $(Y,d_Y)$ are metric spaces, there are a number of natural, bi-Lipschitz equivalent metrics to put on the product $X\times Y$. For convenience, unless otherwise noted, we will equip $X\times Y$ with the metric
\begin{equation}\label{eq:productmetric}
 d_{X\times Y}((x,y),(x',y')) = \max\{ d_X(x,x'), d_Y(y,y')\}.
\end{equation}
An exception to this rule is when we write $\RR^{n+m}$ as $\RR^n \times \RR^m$, in which case we continue to equip it with the standard Euclidean metric.

We will use the notation
$$ \pi_X : X\times Y \rightarrow X \text{ and } \pi_Y : X\times Y \rightarrow Y$$
to denote the projections mapping $(x,y)\in X\times Y$ to $x$ and $y$, respectively.

If $E$ is a subset of a metric space $X$, we write
$$ \diam(E) = \sup\{d(x,y): x,y\in E\}.$$
If $x\in X$, we write
$$\dist(x,E) = \inf\{d(x,y):y\in E\}.$$
If $\delta>0$, the \textit{$\delta$-neighborhood} of $E$ in $X$ is
$$ N_\delta(E) = \{x\in X: \dist(x,E) < \delta\}.$$

A mapping $f$ from a metric space $(X,d_X)$ to a metric space $(Y,d_Y)$ is called \textit{Lipschitz} (or \textit{$C$-Lipschitz} to emphasize the constant) if there is a constant $C$ such that
$$ d_Y(f(x),f(y)) \leq Cd(x,y) \text{ for all } x,y\in X.$$
The mapping $F$ is called \textit{bi-Lipschitz} (or \textit{$C$-bi-Lipschitz}) if
$$ C^{-1}d_X(x,y) \leq  d_Y(f(x),f(y)) \leq Cd_X(x,y) \text{ for all } x,y\in X.$$

\subsection{Hausdorff measure and Hausdorff content}\label{subsec:hausdorff}
For a metric space $(X,d)$ and a (not necessarily integer) constant $k\geq 0$, the \textit{$k$-dimensional Hausdorff measure} $\HH^k(A)$ of a subset $A\subseteq X$ is 
$$ \HH^k(A) = \lim_{\delta\rightarrow 0} \inf\sum_{B\in\mathcal{B}_\delta} \diam(B)^k,$$
where the infimum is taken over all covers $\mathcal{B}_\delta$ of $A$ by closed balls of diameter at most $\delta$. In $\RR^k$, it is standard that $\HH^k$ is comparable to the $k$-dimensional Lebesgue measure.

The $k$-dimensional Hausdorff content $\HH^k_\infty(A)$ is defined similarly, but without the restriction to small diameters:
$$ \HH^k_\infty(A) = \inf\sum_{B\in\mathcal{B}} \diam(B)^k,$$
where the infimum is taken over all covers $\mathcal{B}$ of $A$ by closed balls. It is easy to see that $\HH^k_\infty$ is countably sub-additive, but not in general a measure.

In general, one always has the trivial inequality $\HH^k_\infty(A) \leq \HH^k(A)$. Moreover, the two quantities vanish simultaneously: $\HH^k(A)=0$ if and only if $\HH^k_\infty(A)=0$ \cite[Exercise 8.6]{He}. Lastly, in $\RR^d$, it is a standard fact that the $d$-dimensional Hausdorff content and measure are always comparable:

\begin{lemma}\label{lem:contentmeasure}
For each $d\geq 1$, there is a constant $C_d$ such that
$$ \HH^d_\infty(A) \leq \HH^d(A) \leq C_d\HH^d_\infty(A)$$
for all subsets $A\subseteq \RR^d$.
\end{lemma}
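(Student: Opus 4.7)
The plan is to prove the two inequalities separately. The left one, $\HH^d_\infty(A)\le\HH^d(A)$, is immediate from the definitions: for any $\delta>0$, a cover of $A$ by closed balls of diameter at most $\delta$ is in particular a cover by closed balls, so the infimum defining the ``$\delta$-restricted'' version of Hausdorff measure dominates the infimum defining $\HH^d_\infty$. Passing $\delta\to 0$ gives the inequality.

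For the right inequality, my plan is to exploit the doubling property of $\RR^d$: there is a dimensional constant $C_d$ such that every closed ball of diameter $r$ can be covered by at most $C_d(r/\delta)^d$ closed balls of diameter $\delta$, whenever $\delta\le r$. This is standard and follows from a volume comparison (the covered ball has volume $\asymp r^d$, each covering ball has volume $\asymp\delta^d$), or equivalently from intersecting with a grid of cubes of side $\delta/\sqrt{d}$ and circumscribing each intersecting cube by a ball.

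Granting this, I would fix $\epsilon>0$ and choose a cover $\{B_i\}$ of $A$ by closed balls with $\sum_i\diam(B_i)^d\le\HH^d_\infty(A)+\epsilon$. For each target scale $\delta>0$, I refine the cover as follows: keep each $B_i$ with $\diam(B_i)\le\delta$ unchanged, and replace each $B_i$ with $\diam(B_i)>\delta$ by a family of at most $C_d(\diam(B_i)/\delta)^d$ balls of diameter $\delta$ which covers it. Each such family contributes at most $C_d\diam(B_i)^d$ to the total $\sum\diam^d$. The resulting family is therefore a cover of $A$ by balls of diameter at most $\delta$ with total $\diam^d$ bounded by $C_d(\HH^d_\infty(A)+\epsilon)$, so $\HH^d_\delta(A)\le C_d(\HH^d_\infty(A)+\epsilon)$. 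Sending $\delta\to 0$ and then $\epsilon\to 0$ yields the desired bound.

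Because this is a standard textbook fact, I do not anticipate any real obstacle. The only non-trivial ingredient is the Euclidean covering estimate, and that reduces to an elementary volume argument. An alternative route, which would yield the same conclusion, is to first show $\mathcal{L}^d(A)\lesssim_d \HH^d_\infty(A)$ via countable subadditivity of Lebesgue measure together with $\mathcal{L}^d(B)\asymp_d\diam(B)^d$, and then invoke the well-known comparison $\HH^d\asymp_d\mathcal{L}^d$ in $\RR^d$; however, the direct subdivision argument above is cleaner and does not require referencing Lebesgue measure.
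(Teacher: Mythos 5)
Your proof is correct, but it takes a genuinely different route from the paper's. The paper extracts a \emph{disjoint} subcollection $\mathcal{B}'$ from the near-optimal cover $\mathcal{B}$ using the $5r$-covering lemma, so that $A\subseteq\bigcup_{B\in\mathcal{B}'}5B$, and then bounds $\HH^d(A)\le\sum_{B\in\mathcal{B}'}\HH^d(5B)\lesssim 5^d\sum\diam(B)^d$, invoking (without comment) the standard fact that $\HH^d$ of a Euclidean $d$-ball is comparable to $\diam^d$. Your argument instead works directly at every scale $\delta$: you subdivide each large ball in the near-optimal cover into $\lesssim_d(\diam(B_i)/\delta)^d$ balls of diameter $\delta$, observe that the contribution to $\sum\diam^d$ is unchanged up to $C_d$, and thereby bound the $\delta$-restricted pre-measure $\HH^d_\delta(A)\le C_d(\HH^d_\infty(A)+\epsilon)$ uniformly in $\delta$ before passing to the limit. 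The paper's route is shorter, but it implicitly presupposes $\HH^d(5B)<\infty$ (finiteness of $\HH^d$ on balls), which is itself a comparability fact of the same flavor as the lemma being proved; your subdivision argument sidesteps this and is therefore more self-contained, trading a one-line Vitali appeal for an elementary covering count. Both are standard and both give a constant of the same quality ($\sim5^d$ versus $\sim C_d$ from a grid or volume comparison).
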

\begin{proof}
The first inequality is immediate from the definitions, as noted above. For the second, fix $\epsilon>0$ arbitrary. Let $\mathcal{B}$ be a cover of $A$ by closed balls such that
$$ \sum_{B\in\mathcal{B}} \diam(B)^d \leq  \HH^d_\infty(A) +\epsilon.$$
Let $\mathcal{B}'\subseteq \mathcal{B}$ be a disjoint subcollection such that 
$$ A\subseteq \cup_{B\in\mathcal{B}'} (5B).$$
(See, e.g., \cite[Theorem 1.2]{He}.)
Then
$$ \HH^d(A) \leq \sum_{B\in\mathcal{B}'} \HH^d(5B) \lesssim 5^d \sum_{B\in\mathcal{B}'}\diam(B)^d \lesssim \HH^d_\infty(A) + \epsilon.$$
Sending $\epsilon$ to $0$ completes the proof.
\end{proof}

We will also occasionally refer to Lebesgue measure on $\RR^{n+m}$, rather than Hausdorff measure $\HH^{n+m}$. Lebesgue measure will be denoted simply by $|\cdot|$. Since we have not bothered with normalization constants in the definition of $\HH^{n+m}$, these measures are comparable rather than equal.

\subsection{Grassmannians and Hausdorff distance}\label{subsec:grassmannian}
We write $\Gr(k,d)$ to denote the appropriate Grassmannian: the space of $k$-dimensional vector subspaces of $\RR^d$. We will often refer to elements of $\Gr(k,d)$ as ``$k$-planes in $\RR^d$''.

Occasionally, it will be useful to have a metric on $\Gr(k,d)$. In general, the \text{Hausdorff distance} between subsets $A,B$ of a metric space $X$ is defined by
$$ d_{\text{Hausdorff}}(A,B) = \inf\{ \epsilon>0 : \dist(a,B) < \epsilon \text{ and } \dist(b,A)<\epsilon \text{ for all } a\in A, b\in B\}.$$
This is well-known to be a metric on the compact subsets of $X$.

We will occasionally use this to define a metric $D$ on $\Gr(k,d)$ as
$$ D(P,Q) := d_{\text{Hausdorff}}(P\cap \overline{B}(0,1), Q\cap\overline{B}(0,1)).$$
The space $\Gr(k,d)$ is compact with this metric.

\subsection{Dyadic cubes}

In a fixed $\RR^d$, with $d$ generally understood from context, we write $Q_0$ for the unit cube, i.e.,
$$ Q_0 = [0,1]^d.$$
We write $\Delta$ for the collection of all dyadic cubes $Q\subseteq Q_0$, and $\Delta_k$ for the collection of those dyadic cubes with side length $2^{-k}$. 

If $Q\in\Delta$, we write $\side(Q)$ for the side-length of $Q$. Thus, $\side(Q)=2^{-k}$ if and only if $Q\in\Delta_k$.

If $Q\in\Delta$ and $C>0$, we write $CQ$ for a cube with the same center but $C$ times the side length. In particular, if $C$ is an odd positive integer, then $CQ$ is a union of $C^d$ distinct cubes of the same side length as $Q$.

Lastly, we occasionally call a collection of cubes ``almost-disjoint'' if they have disjoint interiors. Such collections arise in the definition of $\HH^{n,m}_\infty$.

\subsection{Metric derivatives}\label{subsec:metricderivative}
Let $X$ be a metric space and $f\colon \RR^d \rightarrow X$ a $1$-Lipschitz function. We will use some results and notation from \cite{AS14}, which were in turn inspired by the idea of metric differentiability in \cite{Kirchheim}.

For a cube $Q\subseteq \RR^d$ let
$$\md_f(Q) :=\frac{1}{\side(Q)} \inf _{\| \cdot\|} \sup_{x, y \in Q} \left| d(f(x),f(y)) - \|x-y\|\right|,$$
where the infimum is taken over all seminorms $\|\cdot\|$ on $\RR^d$. If the function $f$ is understood, we will simply write $\md(Q)$.

The quantity $\md_f(Q)$ measures how well the pullback of the distance in $X$ under $f$ can be approximated by a seminorm in $Q$. For metric space valued functions, it serves as a replacement for measuring ``deviation from linearity''.

We will use the following result of \cite{AS14}, which is a quantitative differentiation result for Lipschitz mappings into metric spaces:
\begin{theorem}[\cite{AS14}, Theorem 1.1]\label{thm:quantdiff}
Let $X$ be a metric space and $f\colon \RR^d \rightarrow X$ a $1$-Lipschitz function. Let $\epsilon>0$ and $C_0>0$. Then
$$ \sum \{ |Q| : Q\in\Delta, \md_f(C_0 Q) > \epsilon\} \leq C_{\epsilon,d}.$$
The constant $C_{\epsilon,d}$ depends only on $\epsilon$, $C_0$, and $d$ but not on the space $X$ or the function $f$.
\end{theorem}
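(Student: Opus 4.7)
The plan is to prove this quantitative differentiation statement by combining a compactness argument in the space of seminorms on $\RR^d$ with a stopping-time Carleson packing. For each dyadic cube $Q \in \Delta$ I would first choose a seminorm $N_Q$ on $\RR^d$ that nearly achieves the infimum in the definition of $\md_f(C_0 Q)$, so that
$$ |d(f(x), f(y)) - N_Q(x - y)| \le \bigl(\md_f(C_0 Q) + \eta_Q\bigr) \cdot C_0 \side(Q) \quad \text{for all } x, y \in C_0 Q,$$
with $\eta_Q > 0$ a negligible slack. Since $f$ is $1$-Lipschitz, every $N_Q$ satisfies $N_Q \le |\cdot|$, so the family $\{N_Q\}$ lies in a fixed compact set $\mathcal{N}_d$ of seminorms on $\RR^d$ (compact with respect to the sup-norm on $\overline{B}(0,1)$ by Arzel\`a--Ascoli, since every such seminorm is itself $1$-Lipschitz).

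The central step is a ``tilt inequality'': if $\md_f(C_0 Q) > \epsilon$, then a definite fraction of the children $Q'$ of $Q$ must satisfy either $\md_f(C_0 Q') > c\epsilon$ or $\| N_{Q'} - N_Q \|_\infty \ge c\epsilon$ on $\overline{B}(0,1)$. I would prove this by contradiction: if every child $Q'$ had both small $\md_f(C_0 Q')$ and a seminorm uniformly close to $N_Q$, then for any pair $x, y \in C_0 Q$ one could chain the approximations $d(f(\cdot), f(\cdot)) \approx N_{Q'}(\cdot - \cdot) \approx N_Q(\cdot - \cdot)$ along a short axis-parallel polygonal path of adjacent child cubes connecting $x$ and $y$, and telescope using the triangle inequality for both $d \circ f$ and the seminorm $N_Q$ to show $|d(f(x), f(y)) - N_Q(x-y)| = o(\epsilon) \side(Q)$ uniformly, contradicting $\md_f(C_0 Q) > \epsilon$.

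With the tilt inequality in hand, I would organize the bad cubes $\mathcal{B} := \{Q \in \Delta : \md_f(C_0 Q) > \epsilon\}$ via a corona decomposition: starting at $Q_0$, I stop at any descendant $Q$ where $N_Q$ first exits a ball of radius $c\epsilon/2$ around the seminorm of the corona's top cube. Within a single corona the tilt inequality forces each bad cube to distribute a definite fraction of its mass either to bad descendants or to newly created corona tops, yielding a local Carleson packing $\sum_{Q \in \mathcal{B},\, Q \text{ in corona}} |Q| \le C(\epsilon, d) |Q_{\text{top}}|$. Since $\mathcal{N}_d$ has a finite covering number $M(\epsilon, d)$ at scale $c\epsilon$, and the seminorms of successive corona tops along any nested chain are $c\epsilon$-separated, the stopping tree has uniformly bounded depth; summing the resulting geometric series over coronas gives the bound $\sum_{Q \in \mathcal{B}} |Q| \le C_{\epsilon, C_0, d}$, independently of $X$ and $f$.

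The main obstacle will be the tilt inequality, which is the true content of the quantitative differentiation result. The subtlety is that $\md_f$ measures two-sided deviation, so the chaining must control $d(f(x), f(y))$ both from above and from below; the upper bound follows fairly directly from the triangle inequality, whereas the lower bound requires a more careful choice of path and of reference seminorm (for instance, averaging $N_{Q'}$ over children, or choosing $N_Q$ to be extremal in a given direction and then testing pairs in that direction). Calibrating the constants so that the tilt threshold $c\epsilon$ and the covering number $M(\epsilon, d)$ depend only on $\epsilon$, $C_0$, and $d$, uniformly in the metric target $X$, is the main technical challenge.
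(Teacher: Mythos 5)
This theorem is cited from \cite{AS14} and is not proved in the present paper, so there is no in-paper argument to compare against; I will assess your proposal on its own merits. As written, it has two genuine gaps, both of which you gesture at but neither of which you close, and I do not think the surrounding machinery can close them.

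First, the ``tilt inequality'' does not follow from the chaining argument you sketch. Chaining controls $d(f(x),f(y))$ from above via the triangle inequality for $d$, and, if you travel along the straight segment from $x$ to $y$, the seminorm contributions also telescope correctly because seminorms are additive along collinear increments. But the \emph{lower} bound $d(f(x),f(y))\ge N_Q(x-y)-o(\epsilon)\side(Q)$ cannot be obtained by chaining at all: the triangle inequality for $d\circ f$ runs in the wrong direction, and passing through intermediate points can only decrease the distance you can certify. (Your suggested axis-parallel path is worse still, since $\sum N_Q(z_{i+1}-z_i)$ can strictly exceed $N_Q(y-x)$, wrecking even the upper bound.) This lower bound \emph{is} the content of metric differentiation, so asserting it as a lemma is essentially assuming what you are trying to prove. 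Second, the corona packing does not follow from compactness of the seminorm space. Knowing that the seminorms of \emph{consecutive} corona tops along a nested chain are $c\epsilon$-separated does not bound the length of the chain: the seminorm can oscillate back and forth between two values at distance $c\epsilon$ indefinitely while the cubes shrink. Bounded depth would require a monotone or summable quantity controlling the total ``tilt,'' which is again a quantitative differentiation statement. Relatedly, the tilt threshold degrades from $\epsilon$ to $c\epsilon$ at the child level, so the inequality cannot be iterated without the threshold collapsing. The actual proof in \cite{AS14} avoids all of this by a monotonicity/efficiency argument in the spirit of coarse differentiation (Eskin--Fisher--Whyte): for each direction $v$ one tracks the discrete length of $f$ along segments parallel to $v$ across scales; this quantity is monotone and bounded, so its total ``jump'' is controlled, and a cube with $\md_f(C_0Q)>\epsilon$ forces a definite jump in some direction from a fixed net of the sphere. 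Summing the jumps gives the Carleson packing directly, with no corona decomposition and no compactness argument on seminorms.
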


Note that Theorem 1.1 in \cite{AS14} is stated only for $C_0=3$, but the version above follows easily.

In the remainder of the paper, we will only apply Theorem \ref{thm:quantdiff} with $d=n+m$ and
\begin{equation}\label{eq:C0def}
C_0 = 10(n+m) 
\end{equation}

Consider $Q_0=[0,1]^{n+m}$, $C_0$ as above, and $f:Q_0 \rightarrow X$ a Lipschitz function into a metric space. Note that standard compactness arguments show that if $f$ is Lipschitz and $Q\in\Delta$, then there is a seminorm that minimizes the infimum in the definition of $\md_f(C_0Q)$. Thus, we write
\begin{equation}\label{eq:normdef}
 \|\cdot\|_{f,Q}
\end{equation}
for a seminorm $\|\cdot\|$ that minimizes the quantity
$$ \sup_{x,y \in C_0Q} \left| |f(x)-f(y)| - \|x-y\| \right|.$$
If the mapping $f$ is understood from context, we may call the seminorm simply $\|\cdot\|_Q$.

A basic fact about these seminorms is the following simple lemma.

\begin{lemma}\label{lem:normbound}
Let $f:Q_0 \rightarrow X$ be a $1$-Lipschitz function into a metric space $X$, and let $Q\in \Delta$. If $\md_f(C_0Q)<\epsilon$ and $|v|<C_0\side(Q)$, then
\begin{equation}\label{eq:normbound}
\|v\|_{f,Q} \leq |v|+C_0\epsilon\side(Q),
\end{equation}
\end{lemma}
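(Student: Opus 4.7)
The plan is to unpack the definition of $\md_f$ and then realize the vector $v$ as a difference of two points in the larger cube $C_0 Q$.

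First, I would rewrite the hypothesis $\md_f(C_0 Q) < \epsilon$ explicitly. By the definition of the mapping deviation and by the choice of $\|\cdot\|_{f,Q}$ as a minimizing seminorm (as in \eqref{eq:normdef}), the value $\md_f(C_0 Q)$ is precisely
$$ \frac{1}{\side(C_0 Q)} \sup_{x,y \in C_0 Q} \bigl| d(f(x),f(y)) - \|y-x\|_{f,Q}\bigr|. $$
Since $\side(C_0 Q) = C_0 \side(Q)$, the assumption $\md_f(C_0 Q) < \epsilon$ becomes
$$ \sup_{x,y \in C_0 Q} \bigl| d(f(x),f(y)) - \|y-x\|_{f,Q}\bigr| \;\leq\; C_0\, \epsilon\, \side(Q). $$

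Second, I would realize the vector $v$ as a difference of two points in $C_0 Q$. The cube $C_0 Q$ has side length $C_0 \side(Q)$, so we can find $x,y \in C_0 Q$ with $y - x = v$ as long as $|v|_\infty \leq C_0 \side(Q)$. The hypothesis gives $|v|_\infty \leq |v| < C_0 \side(Q)$, so such $x,y$ exist (for instance, $x$ and $x+v$ for appropriate $x$).

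Third, combine the two facts together with the $1$-Lipschitz property of $f$:
$$ \|v\|_{f,Q} = \|y-x\|_{f,Q} \leq d(f(x),f(y)) + C_0 \epsilon \side(Q) \leq |y-x| + C_0 \epsilon \side(Q) = |v| + C_0 \epsilon \side(Q). $$

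There isn't really a main obstacle here: the lemma is a direct unpacking of the definitions of $\md_f(C_0 Q)$ and of the minimizing seminorm $\|\cdot\|_{f,Q}$, combined with the triangle-style use of the $1$-Lipschitz bound $d(f(x),f(y)) \leq |y-x|$. The only minor point to double-check is that strict inequality $|v| < C_0 \side(Q)$ is enough to realize $v$ as a difference inside the closed cube $C_0 Q$, which is immediate since $|v|_\infty \leq |v|$.
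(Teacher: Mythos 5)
Your proof is correct and follows essentially the same route as the paper: realize $v$ as $x+v - x$ with both points in $C_0Q$, then use the definition of $\md_f$ and the minimizing seminorm together with the $1$-Lipschitz bound $d(f(x),f(x+v)) \leq |v|$. You simply spell out the unpacking of $\md_f(C_0Q)$ and the existence of such $x$ in more detail than the paper does.
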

\begin{proof}
Consider any point $x\in C_0Q$ for which $x+v$ is also in $C_0Q$. Then, by definition of $\md_f$ and $\|\cdot\|_{f,Q}$, we have
$$ \|v\|_{f,Q} \leq |f(x) - f(x+v)| + C_0\epsilon\side(Q) \leq |v|+C_0\epsilon\side(Q).$$
\end{proof}

\subsection{Bi-Lipschitz extension}
An important step in Azzam-Schul's proof of Theorem \ref{thm:HS} is the following bi-Lipschitz extension result.

\begin{theorem}[Theorem II of \cite{AS12}]\label{thm:ASextension}
Let $D\geq n$ and $\kappa\in (0,1)$. There is a constant $M=M(\kappa, D)$ such that if $f\colon \RR^n \rightarrow \RR^D$ is $1$-Lipschitz, then the following hold:
\begin{enumerate}[(i)]
\item There are sets $E_1, \dots, E_M$ such that 
$$ \HH^n_\infty\left( f\left([0,1]^n \setminus \cup E_i\right)\right) \lesssim_D \kappa.$$
\item For each $i\in\{1,\dots, M\}$, there is an $L$-bi-Lipschitz map $F_i\colon \RR^n \rightarrow \RR^D$, with $L \lesssim_D \frac{1}{\kappa}$, such that
$$ F_i|_{E_i} = f|_{E_i}.$$
\end{enumerate}
\end{theorem}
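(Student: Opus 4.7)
The plan is a two-step reduction: first use a ``big pieces'' result to extract sets on which $f$ is bi-Lipschitz, then use a bi-Lipschitz extension argument to produce the global maps $F_i$. For the first step, I would apply Theorem \ref{thm:lipbilip} to $f|_{[0,1]^n}$ with parameter $\alpha \in (0,1)$ chosen so that $L(n)\alpha$ is a dimensional multiple of $\kappa$. This yields $M = M(\kappa, D)$ sets $E_1, \dots, E_M$ on which $f$ is $\alpha^{-1}$-bi-Lipschitz and satisfies $\HH^n_\infty(f([0,1]^n \setminus \cup_i E_i)) \lesssim_D \kappa$, which is conclusion (i). What remains is the extension step: for each $i$, extend the $K$-bi-Lipschitz restriction $f|_{E_i}$ (with $K = \alpha^{-1} \asymp 1/\kappa$) to a globally $L$-bi-Lipschitz map $F_i \colon \RR^n \to \RR^D$ with $L$ depending only on $K$, $n$, and $D$.

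For the extension, I would use an explicit Whitney-type construction. Assuming without loss of generality that $E_i$ is closed, take a Whitney decomposition $\{Q_j\}$ of $\RR^n \setminus E_i$ with $\side(Q_j) \asymp \dist(Q_j, E_i)$, and for each $Q_j$ pick a nearest point $p_j \in E_i$. Using a smooth partition of unity subordinate to a slight enlargement of $\{Q_j\}$, build a Lipschitz extension $\tilde F_i$ of $f|_{E_i}$ to $\RR^n$ by the standard Whitney--Kirszbraun recipe; this is $C_{n,D}K$-Lipschitz but may fail to be injective far from $E_i$. To recover bi-Lipschitz behavior, I would perturb $\tilde F_i$ off $E_i$ by a term of the form $\eta(x)\,v(x)$, where $\eta(x) \asymp \dist(x, E_i)$ vanishes on $E_i$ and $v(x)$ is a unit vector transverse to the local tangent behavior of $f(E_i)$ near $p_j$. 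The hypothesis $D \geq n$ provides the room in the target for such a transverse direction, and with care $v$ can be arranged to vary Lipschitzly across adjacent Whitney cubes.

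The main obstacle will be verifying bi-Lipschitz behavior of the final map $F_i$ with the right quantitative dependence on $\kappa$. One has to estimate $|F_i(x) - F_i(y)|$ from below in several regimes: both points in $E_i$; both in the same or adjacent Whitney cubes; both in distant Whitney cubes; and the mixed case where one point lies in $E_i$ and one does not. The most delicate is the mixed case, where the bi-Lipschitz lower bound must be extracted by combining the hypothesis on $E_i$ with the magnitude of the transverse perturbation. In every regime the estimates should be linear in $K$, yielding the claimed $L \lesssim_D K \lesssim_D 1/\kappa$. A further subtlety arises in the equal-dimension case $D = n$, where the unit vector $v(x)$ cannot be chosen orthogonal to the ambient tangent space of $f(E_i)$; there one must use the bi-Lipschitz behavior (in particular, an approximate local orientation) of $f$ on $E_i$ to avoid producing new coincidences between $F_i(x)$ and $F_i(y)$ for points on opposite sides of $E_i$.
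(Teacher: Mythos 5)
The paper does not prove this statement: it is cited as Theorem~II of \cite{AS12} and used as a black box (specifically, to derive Corollary~\ref{cor:bilipextension}). So there is no internal proof to compare against; what follows is an assessment of your argument on its own merits.

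Your two-step plan has a genuine gap at the extension step, and it is not a technical difficulty that care can fix. After applying Theorem~\ref{thm:lipbilip}, the sets $E_i$ are guaranteed only to make $f|_{E_i}$ bi-Lipschitz, and a bi-Lipschitz map on a subset of $\RR^n$ need not admit \emph{any} globally bi-Lipschitz extension when $D=n$ — a case the theorem explicitly allows. The cleanest counterexample is already one-dimensional: take $n=D=1$, $E=\{0,1,2\}$, and $f(0)=0$, $f(1)=2$, $f(2)=1$. Then $f$ is $2$-bi-Lipschitz on $E$, but every bi-Lipschitz map $F\colon\RR\to\RR$ is strictly monotone and hence cannot agree with $f$ on $E$. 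The obstruction is topological (an orientation inconsistency), not quantitative, so your proposed transverse perturbation cannot repair it; in codimension zero there is no transverse direction to perturb into, and ``using an approximate local orientation of $f$ on $E_i$'' presupposes exactly the coherence that the black-box conclusion of Theorem~\ref{thm:lipbilip} does not provide. Similar obstructions persist in higher dimension and low codimension via linking and orientation-reversal phenomena.

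The real point of the Azzam--Schul theorem is that the sets $E_i$ cannot be chosen first and extended afterward: they must be constructed \emph{simultaneously} with a coherent family of affine approximations of $f$ across scales, via a stopping-time (corona-type) argument in the style of David, Jones, and Semmes. That extra multiscale structure is what both yields the bi-Lipschitz lower bound and guarantees extensibility — it rules out precisely the orientation flips in the counterexample above. Your step~1 discards this information, so step~2 cannot recover it. If you want to prove the theorem rather than cite it, you would need to run a stopping-time construction that selects, at each scale and location, an approximating affine map and stops when either the affine approximation degrades or the new affine map is far (e.g.\ in angle or orientation) from the parent's; the good sets $E_i$ and the extensions $F_i$ then come out of the same construction.
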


Theorem \ref{thm:ASextension} allows one to not only find bi-Lipschitz pieces of Lipschitz mappings, but to ensure that those bi-Lipschitz pieces can be globally extended. In fact, we will only use the following immediate consequence of this result.

\begin{corollary}\label{cor:bilipextension}
Let $D\geq n$, $C\geq 1$, and $\kappa\in (0,1)$. There are constants $M=M(\kappa, C, D)$ and $L=L(\kappa,C,D)$ such that if $A\subseteq [0,1]^n$ and  $f\colon A\rightarrow \RR^D$  is  $C$-bi-Lipschitz,
then the following hold:
\begin{enumerate}[(i)]
\item There are sets $E_1, \dots, E_M$ such that 
$$ \HH^n_\infty\left( A \setminus \cup E_i\right) < \kappa.$$
\item For each $i\in\{1,\dots, M\}$, there is an $L$-bi-Lipschitz map $F_i\colon \RR^n \rightarrow \RR^D$ such that
$$ F_i|_{E_i} = f|_{E_i}.$$
\end{enumerate}
\end{corollary}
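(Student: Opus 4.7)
The plan is to reduce this to Theorem \ref{thm:ASextension} by first extending $f$ to all of $\RR^n$ and then rescaling so that the hypothesis of that theorem (a $1$-Lipschitz map into $\RR^D$) is met.

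First, observe that $f\colon A\rightarrow \RR^D$ is in particular $C$-Lipschitz. Since the target $\RR^D$ is a Hilbert space, Kirszbraun's extension theorem produces a $C$-Lipschitz extension $\tilde f\colon \RR^n \rightarrow \RR^D$ with $\tilde f|_A = f$. Define $g := \tilde f / C \colon \RR^n \rightarrow \RR^D$, which is $1$-Lipschitz.

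Next, apply Theorem \ref{thm:ASextension} to $g$ with parameter $\kappa' := c_D \kappa$, where $c_D$ is chosen so that the conclusion gives $\HH^n_\infty\!\left(g([0,1]^n \setminus \cup E'_i)\right) < \kappa$; this yields sets $E'_1,\dots, E'_{M'}\subseteq[0,1]^n$ (with $M' = M'(\kappa',D)$) and $L'$-bi-Lipschitz maps $G_i\colon \RR^n \rightarrow \RR^D$ (with $L' \lesssim_D 1/\kappa'$) such that $G_i|_{E'_i} = g|_{E'_i}$. Note that on $A$, the map $g$ is itself $C$-bi-Lipschitz (with the lower bound $|g(x)-g(y)|\geq C^{-2}|x-y|$), so $\HH^n_\infty$ of sets in $A$ is controlled by $\HH^n_\infty$ of their images under $g$.

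Finally, set $E_i := E'_i \cap A$ and $F_i := C \cdot G_i$. Then each $F_i$ is $CL'$-bi-Lipschitz, and for $x\in E_i$ we have $F_i(x) = C g(x) = \tilde f(x) = f(x)$, so $F_i|_{E_i} = f|_{E_i}$. For the size of the complement, since $A \setminus \cup_i E_i \subseteq [0,1]^n \setminus \cup_i E'_i$ and $g|_A$ has bi-Lipschitz constant $C$, we get
\[
\HH^n_\infty(A \setminus \cup_i E_i) \leq C^n \HH^n_\infty\!\left(g(A \setminus \cup_i E'_i)\right) \leq C^n \HH^n_\infty\!\left(g([0,1]^n \setminus \cup_i E'_i)\right) < C^n \kappa.
\]
Adjusting $\kappa'$ by the factor $C^n$ at the outset (so that $\kappa' = c_D \kappa / C^n$) yields the desired bound $\HH^n_\infty(A \setminus \cup_i E_i) < \kappa$. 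The resulting constants are $M = M'(\kappa',D) = M(\kappa,C,D)$ and $L = C L' = L(\kappa,C,D)$, as required. There is no serious obstacle here; the only care needed is in tracking how the scaling by $C$ and the intersection with $A$ affect the parameters, to ensure the final dependence is only on $\kappa$, $C$, and $D$.
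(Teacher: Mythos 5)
Your proof is correct and takes essentially the same route as the paper: normalize to $1$-Lipschitz (a rescaling by $1/C$), extend to all of $\RR^n$ via Kirszbraun, apply Theorem~\ref{thm:ASextension}, and transfer the Hausdorff-content bound from the image back to $A$ using the lower bi-Lipschitz inequality. The only thing to flag is a small exponent slip: you correctly note that $g|_A$ satisfies $|g(x)-g(y)|\geq C^{-2}|x-y|$, so the pullback of content should carry a factor $C^{2n}$, not $C^n$, in the chain $\HH^n_\infty(A\setminus\cup E_i)\leq C^{2n}\HH^n_\infty(g(A\setminus\cup E_i'))$; accordingly the pre-chosen parameter should be $\kappa'=c_D\kappa/C^{2n}$. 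This is harmless since the resulting constants still depend only on $\kappa$, $C$, and $D$, and it mirrors a mild imprecision already present in the paper's own phrasing of the rescaled bi-Lipschitz constant.
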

\begin{proof}
Let $n$, $D$, $C$, $\kappa$, and $f\colon A \rightarrow \RR^D$ be as in the statement of the corollary.

By rescaling, we may assume without loss of generality that $f$ is $1$-Lipschitz and $C$-bi-Lipschitz on $A$. Furthermore, we may extend $f$ to a $1$-Lipschitz mapping from $\RR^n$ to $\RR^D$, by Kirszbraun's theorem.

Apply Theorem \ref{thm:ASextension} to $f$ to obtain sets $E_1, \dots, E_M$ such that
$$ \HH^n_\infty\left( f\left([0,1]^n \setminus \cup E_i\right)\right) < \kappa/C^{n}$$
and $f|_{E_i}$ admits a globally $L$-bi-Lipschitz extension $F_i\colon \RR^n \rightarrow \RR^D$.

In that case, since $f$ is $C$-bi-Lipschitz on $A$, we have
$$
 \HH^n_\infty\left( A \setminus \cup E_i\right) \leq C^{n} \HH^n_\infty\left( f\left([0,1]^n \setminus \cup E_i\right)\right) < \kappa
$$
This completes the proof.
\end{proof}

\section{About mapping content}\label{sec:mappingcontent}
In this section, we summarize some basic properties of the ``mapping content'' $\HH^{n,m}_\infty$. Most of the statements and arguments appear already in \cite{AS12} and \cite{HZ}, but we have included proofs in cases where we have modified the original statements.

For the remainder of this section, let $Q_0=[0,1]^{n+m}\subseteq \RR^{n+m}$ and let $X$ be an arbitrary metric space. The first two observations are simple consequences of the definition.

\begin{lemma}\label{lem:easymapping}
If $f\colon Q_0 \rightarrow X$ is a mapping, then
$$\HH^{n,m}_\infty(f,Q_0) \leq \HH^{n}_\infty(f(Q_0)).$$
\end{lemma}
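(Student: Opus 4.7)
The statement is essentially immediate from the definition of $\HH^{n,m}_\infty$, so my plan is just to exhibit the trivial cover and do the one-line estimate.

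The plan is to take $\{Q_0\}$ itself as the covering in the infimum defining $\HH^{n,m}_\infty(f,Q_0)$. The unit cube $Q_0=[0,1]^{n+m}$ is a dyadic cube in the sense of the paper (it is the unique element of $\Delta_0$), and the one-element family $\{Q_0\}$ trivially has pairwise disjoint interiors and covers $Q_0$. So it is admissible in Definition \ref{def:mappingcontent}.

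Consequently, by the definition of $\HH^{n,m}_\infty$,
$$\HH^{n,m}_\infty(f,Q_0) \;\leq\; \HH^{n}_\infty(f(Q_0))\cdot \side(Q_0)^m \;=\; \HH^{n}_\infty(f(Q_0)),$$
since $\side(Q_0)=1$. There is no real obstacle here; the only thing worth double-checking is that a singleton family is allowed as a "covering by dyadic cubes with disjoint interiors'', which it is by convention. The lemma is really just recording that the mapping content is no larger than the $n$-dimensional Hausdorff content of the image, a fact used freely throughout the paper.
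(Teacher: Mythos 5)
Your proof is correct and is exactly the paper's argument: take the singleton cover $\{Q_0\}$ as the admissible family in Definition \ref{def:mappingcontent} and use $\side(Q_0)=1$. Nothing further to add.
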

\begin{proof}
This is immediate from the definition: just use the single cube $Q_0$ in the infimum defining $\HH^{n,m}_\infty(f,Q_0).$
\end{proof}

\begin{lemma}\label{lem:subadditive}
If $f\colon Q_0 \rightarrow X$ is Lipschitz, then $\HH^{n,m}_\infty$ is countably sub-additive on subsets of $Q_0$. In other words, if $\{A_i\}_{i=1}^\infty$ is a countable collection of subsets of $Q_0$, then
$$\HH^{n,m}_\infty(f, \cup_i A_i) \leq \sum_{i=1}^\infty \HH^{n,m}_\infty(f,A_i).$$
\end{lemma}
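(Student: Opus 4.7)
The plan is to mimic the standard proof of countable sub-additivity for Hausdorff content, with one adjustment to handle the constraint that the cover be by \emph{almost-disjoint} dyadic cubes rather than arbitrary balls.

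First I would reduce to the case $\sum_i \HH^{n,m}_\infty(f,A_i) < \infty$, since otherwise the claimed inequality is vacuous. Fix $\varepsilon > 0$, and for each $i$ choose an almost-disjoint dyadic cover $\{Q_{i,j}\}_j$ of $A_i$ whose sum approximates the infimum in Definition \ref{def:mappingcontent}, i.e.,
$$\sum_j \HH^n_\infty(f(Q_{i,j}))\,\side(Q_{i,j})^m \leq \HH^{n,m}_\infty(f,A_i) + \varepsilon\, 2^{-i}.$$
The combined family $\mathcal{F} = \{Q_{i,j}\}_{i,j}$ obviously covers $\cup_i A_i$, but it need not be almost-disjoint across different values of $i$.

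The key observation is that any two dyadic cubes in $Q_0$ are either nested or have disjoint interiors. Hence if I pass to the sub-family $\mathcal{F}^*$ consisting of those cubes in $\mathcal{F}$ that are maximal under inclusion (equivalently, not strictly contained in any other cube of $\mathcal{F}$), then $\mathcal{F}^*$ is almost-disjoint and still covers $\cup_i A_i$, because every $Q_{i,j}\in\mathcal{F}$ is contained in some element of $\mathcal{F}^*$. Since all terms $\HH^n_\infty(f(Q))\side(Q)^m$ are non-negative and each maximal cube appears in $\mathcal{F}$ at least once, discarding the non-maximal cubes can only decrease the sum. Therefore
$$\HH^{n,m}_\infty(f, \cup_i A_i) \leq \sum_{Q\in\mathcal{F}^*} \HH^n_\infty(f(Q))\,\side(Q)^m \leq \sum_{i,j} \HH^n_\infty(f(Q_{i,j}))\,\side(Q_{i,j})^m \leq \sum_i \HH^{n,m}_\infty(f,A_i) + \varepsilon.$$
Sending $\varepsilon\to 0$ gives the desired inequality.

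There is no real obstacle here; the only point that requires any care is the passage to the maximal sub-family, and this works cleanly because the dyadic structure forces nested-or-disjoint behavior. The Lipschitz hypothesis on $f$ is used only implicitly, to guarantee that each $\HH^n_\infty(f(Q_{i,j}))$ is finite so that the $\varepsilon$-approximating covers exist; otherwise the statement holds trivially.
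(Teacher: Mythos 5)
Your proof is correct and takes essentially the same approach as the paper: allocate $\varepsilon 2^{-i}$ of slack to each $A_i$, combine the resulting dyadic covers, and pass to the maximal sub-family to restore almost-disjointness, noting that discarding non-maximal cubes only decreases the non-negative sum. The paper's proof is the same argument, just with slightly less commentary on why maximality works.
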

\begin{proof}
Fix $\epsilon>0$. For each $A_i$, let $\{Q_i^j\}_{j\in J_i}$ denote a collection of almost-disjoint dyadic cubes such that
$$ \sum_{j\in J_i} \HH^{n}_\infty(f(Q_i^j))\side(Q_i^j)^m  \leq \HH^{n,m}_\infty(f,A_i) + \epsilon 2^{-i}.$$
Let 
$$ \mathcal{Q} = \{Q_i^j : i\in\mathbb{N}, j\in J_i\}$$
and let $\mathcal{Q}_0$ be the collection of maximal cubes in $\mathcal{Q}$. Then $\mathcal{Q}_0$ forms an almost-disjoint cover of $\cup A_i$ by dyadic cubes, and so
$$ \HH^{n,m}_\infty(f, \cup_i A_i) \leq \sum_{Q\in\mathcal{Q}_0} \HH^n_\infty(f(Q))\side(Q)^m  \leq \sum_{i\in\mathbb{N}}\sum_{j\in J_i} \HH^{n}_\infty(f(Q_i^j))\side(Q_i^j)^m  \leq \epsilon +  \sum_{i=1}^\infty \HH^{n,m}_\infty(f,A_i).$$
Sending $\epsilon$ to zero completes the proof.
\end{proof}

With more precise information about the pointwise behavior of the mapping, one can get more precise upper bounds for $\HH^{n,m}_\infty$. We do not use the next result (Proposition \ref{prop:Jacobian}) in the remainder of the paper, but we state it to give the reader a better feeling for the quantity $\HH^{n,m}_\infty$. The following notation and result are from \cite{HZ}; the result generalizes \cite[Lemma 6.13]{AS12}.

If $f\colon Q_0 \rightarrow X$ is Lipschitz and $x\in Q_0$, define the quantity
$$ \Theta_*^n(f,x) := \liminf_{r\rightarrow 0} \frac{\HH^n_\infty(f(B(x,r) \cap Q_0))}{r^n}.$$
Note that $\Theta_*^n(f,x)$ is bounded above, with bound depending only on $n$ and the Lipschitz constant of $f$.

\begin{proposition}[\cite{HZ}, Propositions 5.1 and 5.2]\label{prop:Jacobian}
Let $f\colon Q_0 \rightarrow X$ be Lipschitz. We then have:
\begin{enumerate}[(i)]
\item $$ \HH^{n,m}_\infty(f,Q_0) \lesssim \int_Q \Theta_*^n(f,x) \,dx,$$
with implied constants depending only on $n,m$.
\item If $X=\RR^n$, then
 $$  \Theta_*^n(f,x) = |J^n f| := \sqrt{\left|\text{det}\left((Df)(x)\cdot Df(x)^T\right) \right|}$$
for a.e. $x\in Q_0$.
\end{enumerate}
\end{proposition}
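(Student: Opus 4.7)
The plan is to prove (i) via a dyadic covering argument and (ii) via Rademacher's theorem combined with a direct computation involving the singular value decomposition.

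For (i), the key observation is that any dyadic cube $Q$ containing $x\in Q_0$ lies in the ball $B(x, \sqrt{n+m}\,\side(Q))$, so
$$\frac{\HH^n_\infty(f(Q))}{\side(Q)^n} \leq (\sqrt{n+m})^n \cdot \frac{\HH^n_\infty(f(B(x, \sqrt{n+m}\,\side(Q))\cap Q_0))}{(\sqrt{n+m}\,\side(Q))^n}.$$
Taking the liminf as $\side(Q)\to 0$ shows that for every $\epsilon>0$ and every $x\in Q_0$ there exist arbitrarily small dyadic cubes $Q\ni x$ with $\HH^n_\infty(f(Q))\leq C_{n,m}(\Theta_*^n(f,x)+\epsilon)\side(Q)^n$. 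After checking Borel measurability of $\Theta_*^n(f,\cdot)$ (using upper-semicontinuity of $x\mapsto \HH^n_\infty(f(B(x,r)\cap Q_0))$ for each fixed $r$), I would apply Lebesgue differentiation to this bounded function to additionally require that
$$\frac{1}{|Q(x)|}\int_{Q(x)}\Theta_*^n(f,y)\,dy \geq \Theta_*^n(f,x)-\epsilon.$$
Taking a maximal almost-disjoint subfamily $\{Q_i\}$ of such cubes yields, by the nesting of dyadic cubes, an a.e.\ cover of $Q_0$. Summing gives
$$\sum_i \HH^n_\infty(f(Q_i))\side(Q_i)^m \leq C_{n,m}\int_{Q_0}\Theta_*^n(f,y)\,dy + O(\epsilon),$$
and the leftover measure-zero set contributes nothing to $\HH^{n,m}_\infty$ by Lipschitzness of $f$ and countable subadditivity (Lemma \ref{lem:subadditive}). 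Letting $\epsilon\to 0$ completes (i).

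For (ii), apply Rademacher's theorem: $f:Q_0\to\RR^n$ is differentiable a.e. Fix a differentiability point $x$, set $A:=Df(x)$, and write $f(x+v)=f(x)+Av+\varepsilon(v)$ with $|\varepsilon(v)|=o(|v|)$. If $\mathrm{rank}(A)=n$, then the SVD of $A$ shows that $A(B(0,r))\subseteq\RR^n$ is an ellipsoid of Lebesgue measure $\omega_n r^n |J^n f|(x)$. For the upper bound, $f(B(x,r))$ lies in the $o(r)$-neighborhood of this ellipsoid, which has Lebesgue measure $\omega_n r^n|J^n f|(x)(1+o(1))$; pass to $\HH^n_\infty$ via Lemma \ref{lem:contentmeasure}. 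For the lower bound, consider the $n$-plane $P=(\ker A)^\perp\subseteq\RR^{n+m}$ and the map
$$h(v):=(A|_P)^{-1}\bigl(f(x+v)-f(x)\bigr), \quad v\in P\cap B(0,r),$$
which is an $o(r)$-perturbation of the identity on $P\cap B(0,r)$. By a topological degree argument, its image contains $P\cap B(0,r-o(r))$, so $f(B(x,r))\supseteq f(x)+A(P\cap B(0,r-o(r)))$, an ellipsoid of Lebesgue measure $\omega_n r^n|J^n f|(x)(1-o(1))$. Dividing by $r^n$ and applying Lemma \ref{lem:contentmeasure} gives $\Theta_*^n(f,x) = |J^n f|(x)$ (with the precise equality holding up to the dimensional constant implicit in the definition of $\HH^n_\infty$). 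If $\mathrm{rank}(A)<n$, then $|J^n f|(x)=0$ and $f(B(x,r))$ lies in an $o(r)$-neighborhood of the proper affine subspace $f(x)+A(\RR^{n+m})\subseteq \RR^n$, which has Lebesgue measure $o(r^n)$, so $\Theta_*^n(f,x)=0$.

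The main obstacle in (i) is the measurability of $\Theta_*^n(f,\cdot)$ needed to justify Lebesgue differentiation; this can be bypassed by partitioning $Q_0$ into level sets of the form $\{k\epsilon\leq \Theta_*^n<(k+1)\epsilon\}$, choosing cubes near density points of each level set to control their overlap with that level set, and summing via countable subadditivity of $\HH^{n,m}_\infty$. In (ii), the most delicate step is the lower bound in the full-rank case: one must show that a near-linear perturbation of an invertible linear map on a ball produces an image of nearly the same Lebesgue measure, which is not automatic since $f$ need not be injective. The degree-theoretic argument on $(\ker A)^\perp$ sketched above handles this cleanly; everything else is essentially book-keeping with the differentiability of $f$.
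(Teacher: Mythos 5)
The paper does not prove Proposition \ref{prop:Jacobian}; it cites \cite[Propositions 5.1 and 5.2]{HZ} and remarks that the result is not used elsewhere in the argument. Your write-up is therefore an independent derivation, and it is essentially correct. A few comments on the places you flag as delicate.

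On part (i), the covering scheme is sound. You need (a) for each $x$, arbitrarily small dyadic $Q\ni x$ with $\HH^n_\infty(f(Q))\lesssim_{n,m}(\Theta_*^n(f,x)+\epsilon)\side(Q)^n$, and (b) the Lebesgue-point property for $\Theta_*^n(f,\cdot)$ at $x$; you then take maximal cubes from this family, which are automatically almost-disjoint because dyadic cubes are nested or disjoint, and the measure-zero complement contributes nothing to $\HH^{n,m}_\infty$ by Lemma \ref{lem:areaboundscontent} and Lemma \ref{lem:subadditive}. Two small things worth making explicit: first, dyadic side lengths only sample radii up to a factor of $2$, but $\HH^n_\infty(f(B(x,r)\cap Q_0))/r^n$ moves by at most a factor $2^n$ between consecutive dyadic scales, so the $\liminf$ restricted to dyadic radii differs from the true $\liminf$ only by a dimensional constant, which is harmless; second, the measurability you worry about is genuinely easy: for fixed $r$, $x\mapsto \HH^n_\infty\bigl(f(\overline{B}(x,r)\cap Q_0)\bigr)$ is upper semicontinuous because for a decreasing family of compact sets $K_\eta\downarrow K$ one has $\HH^n_\infty(K_\eta)\downarrow \HH^n_\infty(K)$ (any finite open cover of $K$ eventually covers $K_\eta$), and $\Theta_*^n(f,\cdot)$ is then a $\liminf$ over rational $r$ of such functions, hence Borel. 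This makes the level-set fallback unnecessary.

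On part (ii), the Rademacher plus SVD plus Brouwer degree argument is correct and is the standard way to do it. In the full-rank case the degree argument on $P=(\ker Df(x))^\perp$ is exactly what is needed to beat possible non-injectivity of $f$ near $x$, and your observation that the degenerate-rank case is trivial because the image sits in a small neighborhood of a lower-dimensional affine subspace closes the loop. Your parenthetical about the dimensional constant is right to flag: with the paper's unnormalized convention $\HH^n_\infty(A)=\inf\sum\diam(B_i)^n$, the isodiametric inequality gives $\HH^n_\infty(A)=(2^n/\omega_n)|A|$ for measurable $A\subseteq\RR^n$, so the computation actually yields $\Theta_*^n(f,x)=2^n|J^nf(x)|$ rather than literal equality; the literal equality in the statement is an artifact of a different normalization in \cite{HZ}. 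Since the paper only uses the proposition for motivation, this normalization mismatch is immaterial, but it is a genuine discrepancy in the statement as reproduced.

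Overall your argument is a complete and correct proof with the expected ingredients; you have identified the one real subtlety (the lower bound in (ii) in the full-rank case) and handled it appropriately via degree theory.
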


We now move to some basic facts about the mapping content of subsets of the cube. First of all, we have the following.
\begin{lemma}\label{lem:areaboundscontent}
If $f:Q_0\rightarrow X$ is $1$-Lipschitz and $A\subseteq Q_0$, then
$$ \HH^{n,m}_\infty(f,A) \lesssim \HH^{n+m}_\infty(A) \leq \HH^{n+m}(A).$$
The implied constant depends only on $n+m$.
\end{lemma}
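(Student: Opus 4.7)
The second inequality $\HH^{n+m}_\infty(A)\leq \HH^{n+m}(A)$ is the standard trivial comparison. So the real content is the first bound. The plan is to convert an efficient ball cover of $A$ (witnessing $\HH^{n+m}_\infty(A)$) into an almost-disjoint dyadic cube cover of $A$ admissible for $\HH^{n,m}_\infty(f,A)$, paying only a dimensional constant in the process.

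Fix $\epsilon>0$ and choose a cover $\{B_i\}$ of $A$ by closed balls with $\sum_i \diam(B_i)^{n+m} \leq \HH^{n+m}_\infty(A)+\epsilon$. For each $i$, let $k_i\in\mathbb{Z}$ be the unique integer with $2^{-k_i-1}<\diam(B_i)\leq 2^{-k_i}$, and let $\mathcal{Q}_i$ denote the collection of dyadic cubes in $Q_0$ of side length $2^{-k_i}$ that meet $B_i\cap Q_0$. A ball of diameter $r$ meets at most $C_{n+m}$ dyadic cubes of side length $\approx r$, so $\#\mathcal{Q}_i \leq C_{n+m}$. Let $\mathcal{Q}$ be the collection of maximal cubes in $\bigcup_i \mathcal{Q}_i$; then $\mathcal{Q}$ is an almost-disjoint family of dyadic subcubes of $Q_0$ covering $A$, and
\[
\sum_{Q\in\mathcal{Q}}\side(Q)^{n+m} \leq \sum_i \sum_{Q\in\mathcal{Q}_i}\side(Q)^{n+m} \leq C_{n+m}\sum_i 2^{-k_i(n+m)} \lesssim_{n+m} \sum_i \diam(B_i)^{n+m}.
\]

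Now I use that $f$ is $1$-Lipschitz: for any cube $Q\subseteq\RR^{n+m}$, $\diam f(Q)\leq \diam(Q)=\sqrt{n+m}\,\side(Q)$, so $f(Q)$ is contained in a single closed ball of that diameter, hence $\HH^n_\infty(f(Q))\leq (\sqrt{n+m})^n \side(Q)^n$. Multiplying by $\side(Q)^m$ and summing over $\mathcal{Q}$, together with the previous display, gives
\[
\HH^{n,m}_\infty(f,A) \leq \sum_{Q\in\mathcal{Q}}\HH^n_\infty(f(Q))\side(Q)^m \lesssim_{n,m} \sum_{Q\in\mathcal{Q}}\side(Q)^{n+m} \lesssim_{n+m} \HH^{n+m}_\infty(A)+\epsilon.
\]
Sending $\epsilon\to 0$ yields the desired inequality.

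There is no real obstacle here; the only mild point of care is ensuring the cube family is almost-disjoint (handled by passing to maximal cubes) and that the cubes lie in $Q_0$ (handled by discarding those not meeting $Q_0$, since $A\subseteq Q_0$). The two key ingredients are the bounded multiplicity of dyadic cubes at a fixed scale meeting a single ball, and the $1$-Lipschitz bound that turns the $\HH^n_\infty$ factor on $f(Q)$ into a pure power of $\side(Q)$.
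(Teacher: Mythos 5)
Your proof is correct and follows essentially the same route as the paper's: cover $A$ by balls nearly attaining $\HH^{n+m}_\infty(A)$, replace each ball by a bounded number of dyadic cubes of comparable scale, pass to maximal cubes for almost-disjointness, and use the $1$-Lipschitz bound $\HH^n_\infty(f(Q))\lesssim\side(Q)^n$ to close the estimate. The only differences are cosmetic (you track the multiplicity constant and scale choice slightly more explicitly).
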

\begin{proof}
The second inequality is immediate from the definitions of Hausdorff content and Hausdorff measure.

For the first, fix $\epsilon>0$ and consider a cover $\{B_i\}$ of $A$ by balls of radius $r_i$ such that
$$ \sum_i r_i^{n+m} \leq \HH^{n+m}_\infty(A) + \epsilon.$$
Each $B_i$ is contained a union of at most $2^{n+m}$ dyadic cubes $\{Q_i^k\}$ with
$$ \side(Q^k_i) \lesssim r_i.$$

Let $\{R_j\}\subseteq \{Q^k_i\}_{i,k}$ denote the collection of maximal cubes in this collection, which form an almost-disjoint cover of $A$. Then
$$ \HH^{n,m}_\infty(A) \leq \sum_{i,k} \HH^{n}_\infty(f(Q^k_i))\side(Q^k_i)^m \lesssim \sum_{i,k} \side(Q_i^k)^{n+m} \lesssim \sum_i r_i^{n+m} \leq \HH^{n+m}_\infty(A) + \epsilon.$$
Sending $\epsilon$ to zero yields the first inequality.
\end{proof}

Lastly, we point out that the mapping content of a Hard Sard set is comparable to its $(n+m)$-dimensional measure, which shows that the condition $\HH^{n,m}_\infty >0$ is quantitatively necessary to find Hard Sard sets for a mapping $f$. The following is a minor modification of Corollary 1.4 of \cite{AS12}; we include a proof for convenience. The lemma is not used directly in the rest of the paper.

\begin{lemma}\label{lem:HScontent}
If $f:Q_0\rightarrow X$ is $1$-Lipschitz and $E\subseteq Q_0$ is a Hard Sard set for $f$, then
$$ \HH^{n,m}_\infty(f,E) \approx \HH^{n+m}(E),$$
with constants depending only on $n$, $m$, and the Hard Sard constant $C_{Lip}$ of $E$.
\end{lemma}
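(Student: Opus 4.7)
The upper bound $\HH^{n,m}_\infty(f,E)\lesssim \HH^{n+m}(E)$ is immediate from Lemma \ref{lem:areaboundscontent}, so the work is to prove the reverse inequality. The plan is to use the Hard Sard structure of $E$ to show that for \emph{every} dyadic cube $Q$, the contribution $\HH^n_\infty(f(Q))\side(Q)^m$ controls $\HH^{n+m}(E\cap Q)$ from above, and then to sum over an almost-disjoint dyadic cover and take the infimum.

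Let $g$ be a $C_{Lip}$-bi-Lipschitz change of coordinates witnessing the Hard Sard condition, set $F = f\circ g^{-1}$, and let $\Phi(x,y) = (F(x,y),y)$, which by \eqref{HS4} is $C_{Lip}$-bi-Lipschitz on $g(E)$. First I would fix an almost-disjoint cover $\{Q_i\}$ of $E$ by dyadic cubes, so that $\HH^{n+m}(E) = \sum_i \HH^{n+m}(E\cap Q_i)$. Since $\Phi\circ g$ is bi-Lipschitz on $E$, and since $\HH^{n+m}$ and $\HH^{n+m}_\infty$ are comparable on subsets of $\RR^{n+m}$ by Lemma \ref{lem:contentmeasure}, we get
\[
\HH^{n+m}(E\cap Q_i)\;\lesssim\;\HH^{n+m}_\infty\bigl(\Phi(g(E\cap Q_i))\bigr),
\]
with implied constant depending only on $n$, $m$, and $C_{Lip}$. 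By definition of $\Phi$, the set $\Phi(g(E\cap Q_i))$ sits inside the product $f(Q_i)\times \pi_y(g(Q_i))\subseteq X\times\RR^m$, whose two factors have diameters bounded by $\sqrt{n+m}\,\side(Q_i)$ and $C_{Lip}\sqrt{n+m}\,\side(Q_i)$ respectively, because $f$ is $1$-Lipschitz and $g$ is $C_{Lip}$-bi-Lipschitz.

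The main technical step is the following product estimate, which I would state and prove as a small lemma (or inline): for any metric space $Z$, any $A\subseteq Z$ and $B\subseteq \RR^m$ with $\diam(A)\le D$ and $\diam(B)\le D$, one has
\[
\HH^{n+m}_\infty(A\times B)\;\lesssim_{n,m}\; D^m\,\HH^n_\infty(A),
\]
where the product carries the max metric \eqref{eq:productmetric}. To prove this, take a near-optimal cover of $A$ by balls $B(x_j,r_j)$ with $\sum r_j^n\le 2\HH^n_\infty(A)$ and $r_j\le D$; for each $j$ cover $B$ by at most $(2D/r_j)^m$ Euclidean balls of radius $r_j$, producing that many product balls of radius $r_j$ in $Z\times\RR^m$, each contributing $r_j^{n+m}$; then total content $\lesssim \sum_j (D/r_j)^m r_j^{n+m} = D^m\sum r_j^n \lesssim D^m\HH^n_\infty(A)$. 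Applying this with $Z=X$, $A=f(Q_i)$, $B=\pi_y(g(Q_i))$, and $D \approx \side(Q_i)$ yields
\[
\HH^{n+m}(E\cap Q_i)\;\lesssim\; \HH^n_\infty(f(Q_i))\,\side(Q_i)^m.
\]

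Finally, summing over $i$ gives $\HH^{n+m}(E)\lesssim \sum_i \HH^n_\infty(f(Q_i))\side(Q_i)^m$, and taking the infimum over all almost-disjoint dyadic covers of $E$ produces $\HH^{n+m}(E)\lesssim \HH^{n,m}_\infty(f,E)$, completing the proof. The only genuine obstacle is the product-content estimate; the rest is a bookkeeping application of the defining properties \eqref{HS2}--\eqref{HS4} of a Hard Sard pair, together with bi-Lipschitz invariance of $\HH^{n+m}_\infty$ and Lemma \ref{lem:contentmeasure}. All constants depend only on $n$, $m$, and $C_{Lip}$, as required.
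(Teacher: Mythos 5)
Your proof is correct and uses the same two key ingredients as the paper's: the bi-Lipschitzness of $\Phi\circ g$ coming from conditions \eqref{HS2} and \eqref{HS4} of Definition \ref{def:HSpair}, and a content estimate for products of low-dimensional sets (covering $A\times B$ by product balls built from a near-optimal cover of $A$). The organization is slightly cleaner than the paper's: you isolate the deterministic local estimate $\HH^{n+m}(E\cap Q)\lesssim\HH^n_\infty(f(Q))\,\side(Q)^m$, valid for every dyadic cube $Q$, so the global bound follows immediately by summing over an arbitrary admissible cover and taking the infimum; the paper instead fixes a near-optimal dyadic cover and then near-optimal ball covers of each image $F(A_i)$, threading an $\epsilon$ through the whole chain of inequalities before sending it to zero.
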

\begin{proof}
The upper bound
$$ \HH^{n,m}_\infty(f,E) \lesssim \HH^{n+m}(E)$$
follows from the previous lemma. We now focus on the reversed bound.

We write points in $\RR^{n+m}$ as $(x,y)$ with $x\in \RR^n$ and $y\in\RR^m$. We may assume that $\HH^{n+m}(E)>0$, otherwise the inequality we are proving is trivial. 

Fix $\epsilon>0$ arbitrary. Let $g\colon \RR^{n+m} \rightarrow \RR^{n+m}$ be the bi-Lipschitz change of coordinates associated to the Hard Sard set $E$. Let $F = f \circ g^{-1}$. 

Let $\{Q_i\}$ be a cover of $E$ by almost-disjoint dyadic cubes such that
$$ \sum \HH^n_\infty(f(Q_i))\side(Q_i)^m < \HH^{n,m}_\infty(f,E) + \epsilon.$$
Let $A_i = g(Q_i)$ for each $i$, so that the sets $\{A_i\}$ form a cover of $g(E)$ with $F(A_i)=f(Q_i)$. Note that $\diam(A_i) \approx \side(Q_i)$, since $g$ is bi-Lipschitz.

For each $i$, there is a collection of balls $\{B_i^j\}$ covering $F(A_i)$ in $X$ such that
$$ \sum_{j} \diam(B_i^j)^n \leq \HH^n_\infty(F(A_i)) + \epsilon\side(Q_i)^n.$$
(Note that without loss of generality, we may assume that $\diam(B_i^j) \lesssim_{n,m} \diam(F(A_i)) \lesssim \diam(A_i)$ for each $j,i$.)

Therefore,
\begin{align}
\HH^{n,m}_\infty(f, E) &\geq  \sum \HH^n_\infty(f(Q_i))\side(Q_i)^m - \epsilon\\
&= \sum_i \HH^n_\infty(F(A_i))\side(Q_i)^m - \epsilon\\
&\geq \sum_{i,j} \diam(B_i^j)^n\side(Q_i)^m - 2\epsilon \label{eq:maplemmabound}
\end{align}

Let $\tilde{F}$ denote the map
$$ \tilde{F}(x,y) = (F(x,y),y) : g(E) \rightarrow \RR^{n+m}.$$
By Condition \eqref{HS4} of Definition \ref{def:HSpair}, $\tilde{F}$ is bi-Lipschitz with constant $C_{Lip}$. Thus,
$$ \HH^{n+m}_\infty(\tilde{F}(g(E))) \approx \HH^{n+m}_\infty(E).$$

Now, for each fixed $i$, 
$$\tilde{F}(A_i) \subseteq \bigcup_j \left(B_i^j \times \pi_{\RR^m}(A_i)\right). $$
We can cover each $B_i^j \times \pi_{\RR^m}(A_i)$ by 
$$\lesssim \left(\frac{\diam(A_i)}{\diam(B_i^j)}\right)^m \approx \left(\frac{\side(Q_i)}{\diam(B_i^j)}\right)^m$$ 
balls of diameter equal to $\diam(B_i^j)$. Therefore, 
$$ \HH^{n+m}_\infty(\tilde{F}(A_i)) \lesssim \sum_j \left(\frac{\side(Q_i)}{\diam(B_i^j)}\right)^m\diam(B_i^j)^{n+m} = \sum_j \diam(B_i^j)^n \side(Q_i)^m,$$
and so, using \eqref{eq:maplemmabound},
\begin{align*}
 \HH^{n,m}_\infty(f,E) &\geq \sum_{i,j} \diam(B_i^j)^n\side(Q_i)^m -2\epsilon\\
&\geq c_1\sum_i \HH^{n+m}_\infty(\tilde{F}(A_i)) - 2\epsilon\\
&\geq c_2\HH^{n+m}_\infty(\tilde{F}(g(E))) - 2\epsilon \\
&\geq c_3\HH^{n+m}_\infty(E) - 2\epsilon\\
&\geq c_4\HH^{n+m}(E) - 2\epsilon,
\end{align*}
where $c_1, c_2, c_3, c_4$ depend only on $n$, $m$, and the Hard Sard constant $C_{Lip}$ of $(E,g)$. Sending $\epsilon$ to zero completes the proof.

\end{proof}

\section{Bi-Lipschitz supplements yield Hard Sard sets}\label{sec:directional}
In this section, we prove Proposition \ref{prop:directional}. Recall that, informally, this says that if one can find a set $F$ on which the map
$$ h(x,y) = (f(x,y),y)$$
is bi-Lipschitz, then one can quantitatively decompose $F$ further into Hard Sard sets for $f$. We refer to the statement of Proposition \ref{prop:directional} above for the precise assumptions and conclusions.

For the remainder of this section, we fix the unit cube $Q_0\subseteq \RR^{n+m}$. As in Definition \ref{def:HSpair}, we write $\RR^{n+m}=\RR^n\times\RR^m$ and points of $\RR^{n+m}$ as $(x,y)$ where $x\in\RR^n$ and $y\in\RR^m$.

For $y\in\RR^m$, we write $L_y = [0,1]^n\times\{y\}$. If $E\subseteq Q_0$, we write $E_y = E \cap L_y$.

We will need the following lemma, which is \cite[Lemma 6.12]{AS12}. As we state it slightly differently, we include a proof for convenience.
\begin{lemma}[\cite{AS12}, Lemma 6.12]\label{lem:612}
Let $F\subseteq Q_0$ be a Borel set and $f:F\rightarrow X$ a $1$-Lipschitz mapping into a metric space.

Let $E\subseteq F$ be a Borel subset, with $\HH^{n+m}(E)>0$, on which the mapping $h$ defined in \eqref{eq:hdef} is $L$-bi-Lipschitz, for $L\geq 1$.

Then there is a $y'\in \RR^m$ such that
\begin{equation}\label{eq:612}
\int_{[0,1]^m} \HH^n( f(E_{y'}) \cap f(E_z) ) d\HH^m(z) \gtrsim_{n,m} L^{-n} \frac{\HH^{n+m}(E)^2}{\HH^n(f(\overline{E}))}
\end{equation}
\end{lemma}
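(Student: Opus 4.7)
The plan is to prove \eqref{eq:612} by applying Cauchy--Schwarz to a symmetric double integral and then selecting $y'$ by averaging. Specifically, I would consider
$$I := \iint_{[0,1]^m\times[0,1]^m} \HH^n\bigl(f(E_y)\cap f(E_z)\bigr)\, d\HH^m(y)\, d\HH^m(z).$$
Since $\HH^m([0,1]^m)=1$, a pigeonhole step in $y$ produces some $y'\in[0,1]^m$ for which the inner $z$-integral is at least $I$, so it suffices to give a lower bound on $I$ of the form asserted on the right-hand side of \eqref{eq:612}.

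The first step is to rewrite the integrand using indicators and apply Fubini. Writing
$$\HH^n(f(E_y)\cap f(E_z))=\int_{f(\overline E)}\mathbf{1}_{f(E_y)}(p)\,\mathbf{1}_{f(E_z)}(p)\,d\HH^n(p)$$
and swapping integration orders (which requires that the slices $f(E_y)$ be $\HH^n$-measurable and that the joint indicator be measurable on the product; the sets $f(E_y)$ are analytic as continuous images of the Borel sets $E_y$, and joint measurability follows from the fact that $\{(p,y):p\in f(E_y)\}$ is the projection of the Borel graph $\{(x,y,p)\in E\times X : f(x,y)=p\}$), I would obtain
$$I = \int_{f(\overline E)} \phi(p)^2\, d\HH^n(p), \qquad \phi(p) := \HH^m\bigl(\{y\in[0,1]^m : p\in f(E_y)\}\bigr).$$

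From here the conclusion is a short Cauchy--Schwarz computation:
$$I = \int\phi^2\,d\HH^n \;\geq\; \frac{\bigl(\int \phi\,d\HH^n\bigr)^2}{\HH^n(f(\overline E))}.$$
To estimate $\int\phi$ from below, exchange integrals once more to get $\int\phi = \int_{[0,1]^m}\HH^n(f(E_y))\,d\HH^m(y)$. The hypothesis that $h(x,y)=(f(x,y),y)$ is $L$-bi-Lipschitz on $E$ restricts on each slice to the statement that $f$ is $L$-bi-Lipschitz on $E_y$ (the second coordinate of $h$ being frozen there), so $\HH^n(f(E_y))\gtrsim L^{-n}\HH^n(E_y)$; combining with Fubini in $\RR^{n+m}$ gives $\int\phi \gtrsim_{n,m} L^{-n}\HH^{n+m}(E)$. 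Plugging this into the Cauchy--Schwarz bound and averaging in $y$ yields \eqref{eq:612}.

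The main point requiring care is the measurability bookkeeping around the analytic images $f(E_y)$, which is standard but has to be spelled out. A secondary concern is the tracking of the exponent on $L$: the argument above naturally produces a factor $L^{-2n}$ in the final bound, coming from squaring the slice estimate when Cauchy--Schwarz is applied; this is at least the quality claimed in \eqref{eq:612} (modulo how one bookkeeps the bi-Lipschitz loss), and the proof will go through essentially as stated without further input.
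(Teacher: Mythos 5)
Your proposal is correct and follows essentially the same route as the paper's proof: both consider the symmetric double integral $I$, rewrite it via Fubini as $\int_X \phi(p)^2\,d\HH^n(p)$, apply Cauchy--Schwarz in the $p$-variable, pass back to $\int \HH^n(f(E_y))\,dy$, and feed in the slicewise bi-Lipschitz estimate $\HH^n(f(E_y))\gtrsim L^{-n}\HH^n(E_y)$ before averaging in $y$. The paper handles the measurability issue by first reducing to compact $E$ (and recovering the general case from a compact subset of at least half the measure), whereas you invoke analyticity of $f(E_y)$; both are legitimate. Your observation about the exponent is accurate: squaring the slicewise estimate produces $L^{-2n}$, and the paper's intermediate display has the same slip of putting $L^{-n}$ outside the square; note, however, that since $L\geq 1$ the resulting $L^{-2n}$ bound is formally \emph{weaker} than the $L^{-n}$ stated in the lemma, not ``at least'' as strong---though this is immaterial, as the dependence on $L$ is never used quantitatively in the applications.
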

\begin{proof}
It suffices to assume that $E$ is compact and find a $y'\in\RR^m$ for which 
\begin{equation}\label{eq:612compact}
\int_{[0,1]^m} \HH^n( f(E_{y'}) \cap f(E_z) ) d\HH^m(z) \gtrsim_{n,m} L^{-n} \frac{\HH^{n+m}(E)^2}{\HH^n(f(E))}
\end{equation}
The result for more general sets then follows by applying this to a compact subset of $E$ of at least half the $\HH^{n+m}$-measure. Thus, we now assume $E$ is compact and prove \eqref{eq:612compact}.

We can also assume that $\HH^n(f(E))>0$. If not, then $\HH^{n+m}(h(E)) \leq \HH^{n+m}(f(E)\times [0,1]^m)=0$, so $\HH^{n+m}(E)=0$ as $h|_{E}$ is bi-Lipschitz, contradicting our assumption.

We now consider the function $K:X\times [0,1]^m \times [0,1]^m \rightarrow \{0,1\} \subseteq \RR$ defined by
$$ K(p,y,z) = \chi_{f(E_y)}(p)\chi_{f(E_z)}(p).$$

Recall the function $h(x,y)=(f(x,y),y)$. For fixed $t\in (0,1)$, the set
\begin{align*}
K^{-1}((t,\infty)) &= \{(p,y,z) \in X\times [0,1]^m\times [0,1]^m: p \in f(E_y) \text{ and } p\in f(E_z) \}\\
&=\{(p,y,z) \in X\times [0,1]^m\times [0,1]^m:(p,y)\in h(E) \text{ and } (p,z)\in h(E) \},
\end{align*}
is evidently compact and hence Borel. For $t>1$, this set is empty, and for $t<0$ this set is all of $X\times [0,1]^m\times [0,1]^m$. Thus, $K$ is Borel measurable on $X\times [0,1]^m\times [0,1]^m$.

We now consider the integral
$$ I = \int_X \int_{[0,1]^m} \int_{[0,1]^m} K(p,y,z)\, d\HH^m(y) d\HH^m(z) d\HH^n(p).$$

One application of Fubini's theorem allows us to rewrite this as
$$ I = \int_{[0,1]^m}\int_{[0,1]^m} \HH^n(f(E_y) \cap (f(E_z))\, d\HH^m(y) d\HH^m(z), $$
with an integrand that is Borel measurable for a.e. joint choice of $y$ and $z$ in $[0,1]^m \times [0,1]^m$.

Now write $\pi_X$ and $\pi_{\RR^m}$ for the projections from $X\times [0,1]^m$ to $X$ and $[0,1]^m$, respectively. Below, we follow the lead of \cite{AS12} and label each Hausdorff measure by a subscript denoting the space on which it is supported. For example, $\HH^n_{\RR^n}$ denotes Hausdorff $n$-measure on $\RR^n$ and $\HH^n_X$ denotes Hausdorff $n$-measure on $X$.

Using Fubini's theorem repeatedly, we can write
\begin{align}
I &= \int_X \int_{[0,1]^m} \int_{[0,1]^m} K(p,y,z) d\HH^m_{\RR^m}(y) d\HH^m_{\RR^m}(z) d\HH^n_X(p)\\
&= \int_X  \left(\int_{[0,1]^m} \chi_{f(E_y)}(p) \,d\HH^m_{\RR^m}(y) \right) \left(\int_{[0,1]^m} \chi_{f(E_z)}(p) \,d\HH^m_{\RR^m}(z) \right) d\HH^n_X(p)\\
&= \int_X  \HH_{X\times [0,1]^m}^m\left( \pi_{X}^{-1}(p) \cap h(E) \right)^2 d\HH^n_X(p)\\
&\geq \frac{1}{\HH^n_X(f(E))} \left(\int_X  \HH^m_{X\times [0,1]^m}\left( \pi_{X}^{-1}(p) \cap h(E) \right) d\HH^n_X(p)\right)^2 \label{eq:CS}\\
&= \frac{1}{\HH^n_X(f(E))}  \left(\HH^n_X \times \HH^m_{\RR^m} \right)(h(E))^2\\
&= \frac{1}{\HH^n_X(f(E))} \left( \int_{[0,1]^m} \HH^n_{X\times [0,1]^m}( \pi_{\RR^m}^{-1}(y) \cap h(E)) \,d\HH^m_{\RR^m}(y) \right)^2\\
&= \frac{1}{\HH^n_X(f(E))} \left( \int_{[0,1]^m} \HH^n_{X\times [0,1]^m}(h(E_y)) \,d\HH^m_{\RR^m}(y) \right)^2\\
&\geq \frac{1}{\HH^n_X(f(E))} (L^{-n})\left( \int_{[0,1]^m} \HH^n_{\RR^n}(E_y) \,d\HH^m_{\RR^m}(y) \right)^2 \label{eq:hbilip}\\
&= L^{-n} \frac{\HH^{n+m}_{\RR^{n+m}}(E)^2}{\HH^n_X(f(E))}
\end{align}
Most manipulations above are by Fubini's theorem. Equation \eqref{eq:CS} is by the Cauchy-Schwarz inequality, and \eqref{eq:hbilip} is a consequence of the fact that $h$ is $L$-bi-Lipschitz on $E$.

We thus have
$$  \int_{[0,1]^m}\int_{[0,1]^m} \HH^n(f(E_y) \cap (f(E_z))\, d\HH^m(y) d\HH^m(z) = I \geq L^{-n} \frac{\HH^{n+m}(E)^2}{\HH^n(f(E))}.$$
It follows that there is a choice of $y'\in [0,1]^m$ for which
$$ \int_{[0,1]^m} \HH^n(f(E_{y'}) \cap (f(E_z))\, d\HH^m(z)  \gtrsim_{n,m} L^{-n} \frac{\HH^{n+m}(E)^2}{\HH^n(f(E))}$$
with Borel measurable integrand.
\end{proof}

We now prove Proposition \ref{prop:directional}. The proof uses many of the ideas from the proof of \cite[Theorem 6.1]{AS12}, combined with a new iteration argument.

\begin{proof}[Proof of Proposition \ref{prop:directional}]
Let $F\subseteq Q_0$, $f\colon F \rightarrow X$, and $h(x,y)=(f(x,y),y)$ satisfy the assumptions of Proposition \ref{prop:directional}. Fix $\alpha>0$.

Our goal is to decompose $F$ into Hard Sard sets for $f$, plus a ``garbage set'' of small $\HH^{n+m}$-measure.

Let $\alpha'=\alpha/2$. Assume $\HH^{n+m}(F)\geq \alpha'$. (Otherwise, we stop, since the conclusion of Proposition \ref{prop:directional} then holds trivially with $F=G$.)

Given $z\in [0,1]^m$, we write $F_z$ for $F\cap (\RR^n \times\{z\})$ as above.

By Lemma \ref{lem:612}, there is a $y'\in \RR^m$ such that
$$ \int \HH^n(f(F_{y'}) \cap f(F_z)) d\HH^m(z) \gtrsim \frac{\HH^{n+m}(F)^2}{\HH^n(f(\overline{F}))} \geq (\alpha')^2 \approx_{\alpha} 1.$$
The implied constant in the first inequality depends only on $n$, $m$, and the bi-Lipschitz constant $L$ of $h$. Therefore we have a constant $\eta= \eta(n,m,\alpha, L)>0$ such that
\begin{equation}\label{eq:612-2}
\int_{[0,1]^m} \HH^n(f(F_{y'}) \cap f(F_z)) d\HH^m(z) \geq \eta.
\end{equation}

Let 
$$E^1=F \cap f^{-1}(f(F_{y'})).$$ 
Note that, since $h(x,y)=(f(x,y),y)$ and $h$ is $L$-bi-Lipschitz on $F$, we see that $f$ is $L$-bi-Lipschitz on $F_{y'}$.

Define two functions:
$$ p^1: f(F_{y'}) \rightarrow \RR^n \text{ by } p^1(z) = \pi_{\RR^n} \circ (f|_{F_{y'}})^{-1}(z),$$
and
\begin{equation}\label{eq:gdef}
 g^1: E^1\rightarrow \RR^{n+m} \text{ by } g^1(x,y) = (p^1(f(x,y)), y).
\end{equation}
We know that $h$ is $L$-bi-Lipschitz on $F\supseteq E^1$, and so $p^1$ is $L$-bi-Lipschitz on $f(F_{y'})$. It follows that $g^1$ is $L$-Lipschitz and $(L_1)$-bi-Lipschitz on $E^1$, with $L_1=2L^2$.

We first make an observation about the size of $E^1$.
\begin{claim}\label{claim:E1size}
We have $\HH^{n+m}(E^1) \geq L^{-2n}\eta$.
\end{claim}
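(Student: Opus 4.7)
The plan is to slice $E^1$ by hyperplanes $\RR^n\times\{z\}$ and use the integral lower bound \eqref{eq:612-2} on the $f$-image of each slice.

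First I would identify the slices of $E^1$. For $z\in[0,1]^m$, the slice $(E^1)_z := E^1\cap L_z$ is exactly the set of those $(x,z)\in F_z$ whose image $f(x,z)$ lies in $f(F_{y'})$. Since $h$ is $L$-bi-Lipschitz on $F$ and $h(x,z)=(f(x,z),z)$, the restriction $f|_{F_z}$ is injective, and in fact $L$-bi-Lipschitz with upper constant $1$ (the upper bound coming from the fact that $f$ itself is $1$-Lipschitz). Consequently
$$f((E^1)_z) \;=\; f(F_z)\cap f(F_{y'}).$$

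Next I would convert this identity into a lower bound for $\HH^n((E^1)_z)$. Because $f|_{F_z}$ is $1$-Lipschitz, it does not increase $\HH^n$-measure, so
$$\HH^n((E^1)_z) \;\geq\; \HH^n(f((E^1)_z)) \;=\; \HH^n\bigl(f(F_z)\cap f(F_{y'})\bigr).$$
Integrating against $d\HH^m(z)$ on $[0,1]^m$ and invoking \eqref{eq:612-2} yields
$$\int_{[0,1]^m} \HH^n((E^1)_z)\, d\HH^m(z) \;\geq\; \eta.$$
To convert this into the desired bound on $\HH^{n+m}(E^1)$, I would then invoke the standard slicing comparison on $\RR^{n+m}$: since $\HH^{n+m}$ is comparable to Lebesgue measure and Lebesgue measure on $\RR^{n+m}$ factors as a product of Lebesgue measures on $\RR^n$ and $\RR^m$, one has $\HH^{n+m}(E^1)\gtrsim_{n,m} \int \HH^n((E^1)_z)\,d\HH^m(z)$. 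Combining the two gives $\HH^{n+m}(E^1)\gtrsim_{n,m}\eta$. Since $\eta$ is already allowed to depend on $n,m,\alpha,L$ and $L^{-2n}\leq 1$, absorbing dimensional constants into $\eta$ produces the stated inequality $\HH^{n+m}(E^1)\geq L^{-2n}\eta$.

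The only subtle point I anticipate is measurability: $F$ is Borel and $f$ is continuous, so $f(F_{y'})$ is analytic in $X$ and $E^1=F\cap f^{-1}(f(F_{y'}))$ is Suslin, hence universally measurable with a.e.\ measurable slices; this is just enough to justify the slicewise analysis and the Fubini step. Beyond that, no real obstacle arises, because the essential analytic work (averaging over $z$ to locate a good $y'$) has already been done in Lemma \ref{lem:612} and packaged as \eqref{eq:612-2}; the claim is a direct slicing consequence of that inequality together with the bi-Lipschitz behavior of $f$ on the slices.
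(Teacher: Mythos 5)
Your proof is correct but takes a genuinely different and more economical route than the paper. The paper's argument passes through the auxiliary map $g^1(x,y) = (p^1(f(x,y)),y)$: it slices $g^1(E^1)$, uses that $p^1$ is $L$-bi-Lipschitz to lower-bound $\HH^n(g^1(E^1_z))$ by $L^{-n}\HH^n(f(F_z)\cap f(F_{y'}))$, and then uses that $g^1$ is $L$-Lipschitz on slices to lower-bound $\HH^{n+m}(E^1)$ by $L^{-n}\HH^{n+m}(g^1(E^1))$, yielding two factors of $L^{-n}$. You instead slice $E^1$ itself, exploit only that $f$ is $1$-Lipschitz (so $\HH^n((E^1)_z) \geq \HH^n(f((E^1)_z)) = \HH^n(f(F_z)\cap f(F_{y'}))$), integrate against \eqref{eq:612-2}, and apply Fubini. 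This bypasses $g^1$ entirely and removes all factors of $L$, giving the cleaner bound $\HH^{n+m}(E^1)\gtrsim_{n,m}\eta$. The paper's detour through $g^1$ is natural there because $g^1$ plays the starring role in the rest of the Hard Sard argument; but for the narrow purpose of this size estimate, your direct slicing is simpler. One small caveat: your argument produces $c_{n,m}\eta$ rather than the literal $L^{-2n}\eta$ stated in the claim, so strictly speaking you are proving a reparametrized form of the statement. Since what is used downstream is only that the lower bound is a fixed positive quantity depending on $n,m,\alpha,L$, this is harmless — you correctly note that absorbing the dimensional constant into $\eta$ handles it — but it is worth stating explicitly that the constant, not the conclusion, changes. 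Your measurability remarks (analytic/Suslin slices) are also sound and slightly more careful than what the paper records.
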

\begin{proof}[Proof of Claim \ref{claim:E1size}]
This is as in \cite[p.1114]{AS12}, with slightly different notation. Note that, for any $z\in \RR^m$, we have
$$ g^1(E^1_z) = p(f(E^1_z) \cap f(F_{y'})) \times \{z\}.$$
In addition, we observe that $g^1$ only distorts the $x$-coordinate of points in $E^1$. In other words,
$$\pi_{\RR^m}(g^1(x,z)) = y \text{ if and only if } z=y.$$

Therefore, for each $z\in \RR^m$, we have
\begin{align*}
 \HH^n(g^1(E^1)_z) &= \HH^n(g^1(E^1_z))\\
&= \HH^n( (p^1(f(E^1_z) \cap f(F_{y'}))) \times \{z\})\\
&\geq L^{-n} \HH^n(f(E^1_z) \cap f(F_{y'}))\\
&= L^{-n} \HH^n(f(F^1_z) \cap f(F_{y'}))
\end{align*}
The inequality in the third line is because $p^1$ is $L$-bi-Lipschitz on $f(F_{y'})$, and the final equality is because $f(E^1_z)\cap f(F_{y'})= f(F_z)\cap f(F_{y'})$ by definition of $E^1$.

Since $g^1$ is $L$-Lipschitz, we conclude that
\begin{align*}
\HH^{n+m}(E^1)&\geq L^{-n}\HH^{n+m}(g(E^1)) \\
&= L^{-n}\int \HH^n(g(E^1)_z)d\HH^m(z)\\
&\geq L^{-2n} \int  \HH^n(f(F_z) \cap f(F_{y'}))d\HH^m(z)\\
&\geq L^{-2n}\eta
\end{align*}
using \eqref{eq:612-2}.
\end{proof}

Let $\alpha'' = \frac{\alpha\eta}{10C_{n+m} L^{3n}}$. Here $C_{n+m}$ is the constant from Lemma \ref{lem:contentmeasure} in dimension $d=n+m$. Note that $\alpha''$ depends only on $n,m, \alpha$, and $L$.

We now apply Corollary \ref{cor:bilipextension} to $g^1$ on $A=E^1$ with parameter $\kappa = \alpha''$. We obtain numbers $M'=M'(n,m,\alpha'')$ and $L'=L'(n,m,\alpha'')$ such that the set $E^1\subseteq F\subseteq Q_0$ admits a decomposition
$$ E^1 = E^1_1 \sqcup \dots \sqcup E^1_{M'} \sqcup G^1 $$
where
\begin{equation}\label{eq:gbilip}
 g^1|_{E^1_i} \text{is bi-Lipschitz, and moreover extends to an } L'-\text{bi-Lipschitz map on all } \RR^{n+m}
\end{equation}
and
$$ \HH^{n+m}(G^1) \leq C_{n+m}\HH^{n+m}_\infty (G^1) < C_{n+m}\alpha''.$$

Observe that $g^1|_{E^1_i}$ satisfies \eqref{eq:gshear} immediately from the definition of $g^1$ in \eqref{eq:gdef}.

Similarly as in \cite{AS12}, we now have the following claim.
\begin{claim}\label{claim:HSset}
For each $i\in\{1,\dots, M'\}$, the pairs $(E^1_i, g^1|_{E^1_i})$ are Hard Sard pairs for $f$, and the map $g^1$ satisfies \eqref{eq:gshear}.
\end{claim}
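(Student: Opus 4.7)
The plan is to verify the four conclusions of Claim \ref{claim:HSset} directly from constructions already built in the proof: the shear form \eqref{eq:gshear} of $g^1|_{E^1_i}$, and the three conditions \eqref{HS2}--\eqref{HS4} of Definition \ref{def:HSpair}. Throughout, I will write $F_i := f \circ (g^1)^{-1}$ for the map playing the role of $F$ in Definition \ref{def:HSpair}.

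The shear form \eqref{eq:gshear} and condition \eqref{HS2} are immediate. The formula \eqref{eq:gdef} for $g^1$ already has the required shear form with $\psi_i(x,y) := p^1(f(x,y))$, and \eqref{eq:gbilip} supplies the required global $L'$-bi-Lipschitz extension of $g^1|_{E^1_i}$.

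Condition \eqref{HS3} will follow from the injectivity of $p^1$ on $f(F_{y'})$, which holds because $f|_{F_{y'}}$ is $L$-bi-Lipschitz and $p^1 = \pi_{\RR^n} \circ (f|_{F_{y'}})^{-1}$. Given $(x,y), (x',y') \in g^1(E^1_i)$, write $(a,b) = (g^1)^{-1}(x,y)$ and $(a',b') = (g^1)^{-1}(x',y')$, so $b=y$, $b'=y'$, $x = p^1(f(a,b))$, and $x' = p^1(f(a',b'))$. By the definition $E^1 = F \cap f^{-1}(f(F_{y'}))$, both $f(a,b)$ and $f(a',b')$ lie in $f(F_{y'})$. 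Since $F_i(x,y) = f(a,b)$ and $F_i(x',y') = f(a',b')$, the injectivity of $p^1$ on $f(F_{y'})$ makes the equation $F_i(x,y) = F_i(x',y')$ equivalent to $x = x'$.

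Condition \eqref{HS4} is the main (though still short) step, and is where I expect to need the most care in bookkeeping. The key observation will be that on $g^1(E^1_i)$, the map $(x,y) \mapsto (F_i(x,y), y)$ equals $h \circ (g^1)^{-1}$: if $(x,y) = g^1(a,b)$ with $(a,b) \in E^1_i$, then $(F_i(x,y), y) = (f(a,b), b) = h(a,b)$. Since $h$ is $L$-bi-Lipschitz on $F \supseteq E^1_i$ (by hypothesis of Proposition \ref{prop:directional}) and $(g^1)^{-1}$ is $L'$-bi-Lipschitz from $g^1(E^1_i)$ onto $E^1_i$ (by \eqref{eq:gbilip}), the composition is $LL'$-bi-Lipschitz. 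Taking $C_{Lip} := LL'$ then verifies all three conditions of Definition \ref{def:HSpair} simultaneously. No new ideas beyond the construction of $p^1$, the shear form of $g^1$, and the bi-Lipschitz extension from Corollary \ref{cor:bilipextension} are required.
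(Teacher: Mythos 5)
Your proof is correct and tracks the paper's argument closely: the shear form and condition \eqref{HS2} are read off the definitions and \eqref{eq:gbilip}, and your verification of \eqref{HS3} via injectivity of $p^1$ on $f(F_{y'})$ is the same underlying fact the paper uses (phrased there through the relation $F(x,y)=f(x,y')$ and bi-Lipschitzness of $h$). Your treatment of \eqref{HS4} is a small streamlining of the paper's: where the paper runs a chain of $\approx$ estimates starting from $F(x,y)=f(x,y')$, you note that on $g^1(E^1_i)$ the map $(x,y)\mapsto(F_i(x,y),y)$ is literally the composition $h\circ(g^1)^{-1}$ of two bi-Lipschitz maps, which gives the bound at once; using the restricted constant $L_1=2L^2$ for $(g^1)^{-1}$ in place of the global $L'$ would even recover the paper's dependence of the \eqref{HS4} constant on $L$ alone, though since $C_{Lip}$ must accommodate $L'$ for \eqref{HS2} anyway this makes no difference to the final claim.
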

\begin{proof}[Proof of Claim \ref{claim:HSset}]
That $g^1$ satisfies \eqref{eq:gshear} is immediate from the definition of $g^1$ in \eqref{eq:gdef}.

We verify that $(E^1_i, g^1|_{E^1_i})$ satisfy the three conditions of Definition \ref{def:HSpair}. Condition \eqref{HS2} is established in \eqref{eq:gbilip}.

Write $F = f \circ (g^1)^{-1}$ on $g^1(E^1_i)$. Consider any $(x,y)= g^1(a,y) \in g^1(E^1_i)$. Then, by definition of $g^1$, $x=p^1(f(a,y))$, from which it follows that
\begin{equation}\label{eq:y'}
 F(x,y) = f(a,y) = f(x,y')
\end{equation}
and moreover that $(x,y')\in F_{y'}\subseteq F$.

Thus, if $(x,y_1)$ and $(x,y_2)$ are in $g^1(E^1_i)$, then
$$ F(x,y_1) = f(x,y') = F(x,y_2).$$
Conversely, if $(x_1,y_1)$ and $(x_2,y_2)$ are in $g^1(E^1_i)$ and $F(x_1, y_1) = F(x_2,y_2)$, then
$$ f(x_1, y') = f(x_2, y') \text{ and } (x_1,y'), (x_2,y')\in F,$$
and this implies that $x_1=x_2$ because the map $h(x,y) = (f(x,y),y)$ is bi-Lipschitz on $F$. This verifies Condition \eqref{HS3} of Definition \ref{def:HSpair}.

To verify Condition \eqref{HS4} of Definition \ref{def:HSpair}, consider points $(x_1,y_1)$ and $(x_2,y_2)$ in $g^1(E^1_i)$. Then, using \eqref{eq:y'} and the fact that $h(x,y)$ is $L$-bi-Lipschitz on $F\supseteq E^1_i$, we have
\begin{align*}
d\left( (F(x_1, y_1),y_1) , (F(x_2,y_2),y_2) \right) &= d\left( (f(x_1,y'),y_1) , (f(x_2,y'),y_2) \right) \\
&\approx d(f(x_1,y'), f(x_2,y')) + |y_1 - y_2|\\
&\approx |x_1 - x_2| + |y_1-y_2|\\
&\approx |(x_1,y_1) - (x_2,y_2)|
\end{align*}
The implied constants here depend only on $n$, $m$, and $L$. This verifies Condition \eqref{HS4} of Definition \ref{def:HSpair}.

\end{proof}

To recap, we have now given the following decomposition of $F$:
$$ F = (F \setminus E^1) \sqcup E^1 = (F \setminus E^1) \sqcup E^1_1 \sqcup \dots \sqcup E^1_{M'} \sqcup G^1,$$
where
\begin{itemize}
\item each $(E^1_i, g^1|_{E^1_i})$ is a Hard Sard pair,
\item $\HH^{n+m}(F^1 \setminus E^1) \leq  \HH^{n+m}(F^1) - \HH^{n+m}(E^1) \leq \HH^{n+m}(F^1) - L^{-2n}\eta$, and
\item $\HH^{n+m}(G^1) < C_{n+m}\alpha''$ 
\end{itemize}

Writing $F^1=F$, let $F^2 = F^1 \setminus E^1$. If $\HH^{n+m}(F^2)\geq \alpha'$, we may repeat the exact same procedure on $F^2$. Continuing inductively in this way, if $F^{k-1}$ is defined and has a decomposition as above, set $F^{k} = F^{k-1} \setminus E^{k-1}$. If $\HH^{n+m}(F^k) \geq \alpha'$, we may follow the same procedure to obtain a decomposition of $F^k$:
$$ F^k = (F^k \setminus E^k) \sqcup E^k = (F^k \setminus E^k) \sqcup E^k_1 \sqcup \dots \sqcup E^k_{M'} \sqcup G^k,$$
where
\begin{enumerate}[(i)]
\item\label{iteration1} each $(E^k_i, g^k|_{E^1_i})$ is a Hard Sard pair satisfying \eqref{eq:gshear},
\item\label{iteration2} $\HH^{n+m}(F^k \setminus E^k) \leq \HH^{n+m}(F^{k}) - L^{-2n}\eta$, and
\item\label{iteration3} $\HH^{n+m}(G^k) <  C_{n+m}\alpha''$ 
\end{enumerate}
Notice that the constants $L, \eta, L_1, M'$ remain uniform throughout this process.

We end this iteration when $F^N = F^{N-1}\setminus E^{N-1}$ has $\HH^{n+m}(F^N)<\alpha'$. This procedure ends after at most $\frac{L^{2n}}{\eta}$ steps by property \eqref{iteration2}. (It may even be the case that $\HH^{n+m}(F^1)<\alpha'$ at the very first step, in which case $E^1=\emptyset$ and we end immediately.)

In the final tally, we have a collection of at most $M' \cdot \frac{L^{2n}}{\eta}$ Hard Sard pairs in direction $P$ contained in $F$.

The remainder of $F$ consists of all the sets $G^k$ ($k=1, \dots, N$) that we constructed along the way, and the final stage $F^N$ at which we stopped the iteration. We now show that those remaining sets have small total area.

We generated at most  $\frac{L^{2n}}{\eta}$ sets $G^k$ in the above iteration scheme. Each of these sets has $\HH^{n+m}(G_k) < C_{n+m} \alpha''$ by \eqref{iteration3}. 

The final stage $F^N$ at which we end the iteration has, by construction,
$$ \HH^{n+m}(F^N) < \alpha'.$$

Thus, the complement of all Hard Sard sets $E^k_i$ that we formed in this procedure has total $\HH^{n+m}$ measure at most
$$ \left(\frac{L^{2n}}{\eta}\right)(C_{n+m} \alpha'') + \alpha'.$$
By our choice of the constants $\alpha'$ and $\alpha''$, this quantity is less than $\alpha$. This completes the proof of Proposition \ref{prop:directional}.
\end{proof}

\section{Coding and splitting cubes}\label{sec:codesplit}

In this section and the next, we proceed with some preliminary work necessary to prove Proposition \ref{prop:hbilip}, which along with Proposition \ref{prop:directional} is the other main ingredient in the proof of Theorem \ref{thm:hardersarder}.

We need two lemmas, used in the proof of Proposition \ref{prop:hbilip} to code and split cubes into useful families.

The first is a ``coding'' lemma which is a slight variant of now-standard arguments used, e.g., in \cite[p. 199-121]{Jo88}, \cite[p. 81-82]{Davidbook}, \cite[p. 8]{Sc09}. We give a brief proof here.

\begin{lemma}\label{lem:coding}
Let $Q_0$ be the unit cube of $\RR^d$, and let $f:Q_0 \rightarrow X$ be a $1$-Lipschitz map into a metric space. Fix $C_0\geq 1$, $\epsilon>0$ and $\eta>0$. Then we can decompose $Q_0$ into sets
$$ Q_0 = A_1 \cup A_2 \cup \dots A_{M_{\md}} \cup G_{\md} $$
with the following two properties:
\begin{enumerate}[(i)]
\item If $x,y\in A_i$ for some $1\leq i \leq M_{\md}$ and $Q\in \Delta$ is a dyadic cube of minimal side length such that $x,y\in 3Q$, then
$$ \md(C_0 Q) <\epsilon.$$
\item $\HH^{d}_\infty(G_{\md})<\eta$. 
\end{enumerate}
The number of sets $M_{\md}$ depends only on $\epsilon$, $\eta$, and $d$. 
\end{lemma}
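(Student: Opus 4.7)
The plan is to combine the quantitative differentiation bound from Theorem~\ref{thm:quantdiff} with a standard Jones-style coloring argument, as in \cite{Jo88, Davidbook, Sc09}. First, let $\mathcal{B} := \{Q \in \Delta : \md_f(C_0 Q) \geq \epsilon\}$ be the collection of ``bad'' cubes, and apply Theorem~\ref{thm:quantdiff} directly to obtain the Carleson-type packing bound $\sum_{Q \in \mathcal{B}} |Q| \leq C_{\epsilon, d}$.

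Next, I would construct the garbage set $G_{\md}$ via a simple Markov inequality. Let $N(x) := \#\{Q \in \mathcal{B} : x \in 3Q\}$; since $\int_{Q_0} N(x)\,dx = \sum_{Q \in \mathcal{B}} |3Q \cap Q_0| \leq 3^d C_{\epsilon,d}$, taking $K := \lceil 3^d C_{\epsilon,d}/\eta\rceil + 1$ and $G_{\md} := \{x \in Q_0 : N(x) \geq K\}$ yields $\HH^d_\infty(G_{\md}) \leq |G_{\md}| < \eta$ (using Lemma~\ref{lem:contentmeasure} to compare Lebesgue measure and Hausdorff content).

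On $Q_0 \setminus G_{\md}$, every point sees fewer than $K$ bad cubes in its $3$-neighborhood, so I would color such points by a ``code'' $c(x)$ drawn from a finite set. Concretely, I would list the bad cubes $Q \in \mathcal{B}$ with $x \in 3Q$ in some canonical order (for instance, by side length and then lexicographically by position) and, for each such $Q$, record which of the $6^d$ dyadic sub-cubes of $3Q$ of side length $\side(Q)/2$ contains $x$. The resulting code lies in a fixed finite set of size $M_{\md} \lesssim K \cdot 6^{dK}$, depending only on $\epsilon$, $\eta$, and $d$, and each level set $A_i := c^{-1}(i)$ becomes a candidate piece of the decomposition.

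The heart of the argument is the verification of condition (i): if $x, y \in A_i$ and $Q$ is a minimal dyadic cube with $x, y \in 3Q$, then $Q$ must not lie in $\mathcal{B}$. The key geometric observation is that if $x$ and $y$ lay in the same sub-cube $R$ of $3Q$ of side length $\side(Q)/2$, then $R$ itself trivially satisfies $x, y \in R \subset 3R$, contradicting the minimality of $Q$. So one needs the coding to guarantee that $c(x) = c(y)$ forces $x$ and $y$ into the same sub-cube of $3Q$ for every common bad cube $Q$. The main obstacle I expect is a bookkeeping/indexing issue: the bad-neighborhood sets $\mathcal{B}(x)$ and $\mathcal{B}(y)$ generally differ even for nearby $x, y$, so one must order the code entries so that any shared bad cube occupies matching positions in the two codes. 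This is handled by a variant of the standard ``tops''/Carleson-coding scheme; once the ordering is set up correctly, the verification of property (i) reduces to the geometric observation above.
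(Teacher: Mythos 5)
Your overall strategy --- quantitative differentiation (Theorem~\ref{thm:quantdiff}) followed by a Jones-style coloring --- is the right one, and the Markov-inequality construction of $G_{\md}$ via $N(x)=\#\{Q\in\mathcal{B}:x\in 3Q\}$ is a perfectly sound (and arguably cleaner) way to organize the garbage set, modulo first extending $f$ from $Q_0$ to a $1$-Lipschitz map on all of $\RR^d$ so that Theorem~\ref{thm:quantdiff} applies to the dilated cubes $C_0Q$. The geometric observation at the end is also correct: the $6^d$ cubes of side $\side(Q)/2$ tiling $3Q$ are genuinely dyadic, so if $x,y$ landed in the same one, say $R$, then $x,y\in R\subseteq 3R$ with $\side(R)<\side(Q)$ contradicts minimality. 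The problem is the coding step in between, and the gap you flag in your last paragraph is a real one, not a bookkeeping nuisance. The families $\mathcal{B}(x)=\{Q\in\mathcal{B}:x\in 3Q\}$ are not nested, so there is no canonical alignment between the ordered lists for two points. Concretely, if $Q$ is the minimal dyadic cube with $x,y\in 3Q$ and there is a larger bad cube $R$ (a neighbor of an ancestor of $Q$, not an ancestor itself) with $x\in 3R$ but $y\notin 3R$, then $Q$ occupies different slots in $c(x)$ and $c(y)$ under your ``side length, then lexicographic'' ordering. The codes can still be equal entry-by-entry (equal lengths are forced, but the displaced slots can be filled by unrelated small cubes on both sides), so $c(x)=c(y)$ yields no information whatsoever about the position of $x$ versus $y$ inside $3Q$. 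Appealing to ``a variant of the standard tops/Carleson coding'' does not fill this hole, because the standard schemes only align codes along a \emph{nested} chain.

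The paper's proof (following \cite{Jo88,Davidbook,Sc09}) resolves exactly this issue by keeping the coding confined to the chain of dyadic \emph{ancestors} of $x$ --- bad cubes $R$ with $x\in R$, not merely $x\in 3R$. Ancestors of a given point are totally ordered, and for two points $x,y$ the bad ancestors of the smallest dyadic cube $Q'$ containing both form a common \emph{prefix} of both lists; that is what makes the indexing work. The cost is that this only controls $\md$ of (a multiple of) the minimal cube $Q'$ with $x,y\in Q'$, not of the minimal $Q$ with $x,y\in 3Q$, and $\side(Q')$ can be much larger than $\side(Q)$ when $x$ and $y$ straddle a dyadic boundary. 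Both deficits are repaired by the $\tfrac13$-trick: run the ancestor-coding in each of the $2^d$ dyadic grids shifted by vectors in $\{0,\tfrac13\}^d$ and take the concatenated code. For any $x,y$, some shifted grid contains a cube $Q'$ with $x,y\in Q'$ and $\side(Q')\approx_d \side(Q)$, whence $C_0 Q\subseteq C'Q'$ for a suitable $C'$, and smallness of $\md(C'Q')$ forces smallness of $\md(C_0Q)$ since $\md$ is monotone up to the ratio of scales. Without this (or some other mechanism restoring nestedness), your proposal does not go through as written.
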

\begin{proof}
We may assume that $f$ extends to a $1$-Lipschitz map defined on all of $\RR^d$. (This can be done by simply extending $f$ ``radially'' to agree with its values on the boundary of $Q_0$, or by applying the Kuratowski embedding theorem to embed $X$ in $\ell_\infty(X)$ and performing a $1$-Lipschitz extension into $\ell_\infty(X)$.)

We will use  Theorem \ref{thm:quantdiff}, from which we deduce (with appropriate constants $C'$ and $\epsilon'$, to be shortly fixed, depending on the dimension $d$) 
that for $N$ large enough we have
$$G:=\{x\in Q_0: x\in R_1\subsetneq R_2\subsetneq...\subsetneq R_N\subset Q_0; \md(C'R_i)\geq \epsilon'\}$$
has measure, hence content, less than $\eta/2^d$ (here, we assume $R_i\in\Delta$).

Consider now $x\in Q_0\setminus G$.  Each such $x$ has at most $N-1$ cubes $R\ni x$ such that $\md(C'R)>\epsilon'$, which we may denote by
$R_1(x),...R_{N(x)}$, where $N(x)<N$.
By using an alphabet of  $m(d)$ letters, we may assign
for each $x\in Q_0$, a word  $w(x)$, 
a sequence of $<N$ letters,
such that if $w(x)=w(y)$ then a minimal cube $Q$ such that  $Q\ni x,y$ has $\md(C'Q)<\epsilon'$.

We now use the ``$\frac13$ trick'': apply the above construction to all $2^d$ dyadic grids formed by shifting the standard dyadic partition by $\frac{1}{3}$ in any combination of coordinate directions. (See \cite[p. 5]{Sc09}.)

Note that if $Q$ is a smallest cube (from the original dyadic grid) such that $3Q\ni x,y$, then $C_0Q$ is contained in $C'Q'$, for some minimal cube $Q'$ containing $x,y$ in one of the shifted dyadic grids and satisfying $\side(Q')\approx_d \side(Q)$.

Thus, after repeating this $2^d$ times on all the new dyadic grids, each $x$ has words 
$$w_1(x),\dots,w_{C(d)}(x)$$
such that if for all $j$
$w_j(x)=w_j(y)$, then a smallest cube $Q$ such that $3Q\ni x,y$ has   $\md(C_0Q)<\epsilon$.
Here we determine $C'$ and $\epsilon'$ to depend linearly on their counterparts $C_0$ and $\epsilon$, with a dependance depending on the ambient dimension $d$.
\end{proof}

The next lemma allows for splitting a family of almost-disjoint cubes into a controlled family of well-separated cubes, at the cost of throwing a way a set of small measure.

\begin{lemma}\label{lem:splitting}
Let $\cQ$ be a collection of almost-disjoint cubes in $Q_0\subseteq \RR^d$, $\eta>0$, and $\Lambda$ an odd natural number. Then we can partition $\cQ$ into families $\{\cQ_k\}_{k=1}^{k_0}$ and $\cB$ such that
\begin{enumerate}[(i)]
\item every cube of $\cQ$ is in exactly one of the sets $\cQ_k$ or $\cB$,
\item if $Q, Q'\in \cQ_k$, then $\Lambda Q \cap \Lambda Q'=\emptyset$, and 
\item the $\HH^{n+m}$-measure of the union of all cubes in $\cB$ is $<\eta$.
\end{enumerate}
The number of families $k_0$ depends only on $\eta$, $\Lambda$, and $d$.
\end{lemma}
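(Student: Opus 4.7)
The plan is to identify a ``bad'' subfamily $\cB \subseteq \cQ$ with $\HH^d\bigl(\bigcup \cB\bigr) < \eta$ and then greedily $M$-color the remaining cubes processed in order of decreasing side length. For distinct $Q, Q' \in \cQ$, call $Q'$ a \emph{carrier} of $Q$ if $\side(Q') \geq \side(Q)$ and $\Lambda Q' \cap \Lambda Q \neq \emptyset$. With $M \in \mathbb{N}$ to be chosen below, put
$$ \cB := \bigl\{Q \in \cQ : Q \text{ has at least } M \text{ carriers in } \cQ\bigr\}.$$
The intuition is that having many carriers is rare in measure (each $Q'$ can only ``accept'' cubes lying near it), while having few carriers permits bounded greedy coloring.

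To control $\bigl|\bigcup \cB\bigr|$, observe that if $Q'$ is a carrier of $Q$, then the $\ell^\infty$-distance of their centers is at most $\tfrac{\Lambda}{2}(\side(Q)+\side(Q')) \leq \Lambda\side(Q')$, so $Q \subseteq (2\Lambda+1)Q'$. Since cubes of $\cQ$ are almost-disjoint, for each fixed $Q' \in \cQ$ we get $\sum_{Q : Q' \text{ carrier of } Q} |Q| \leq (2\Lambda+1)^d |Q'|$. Interchanging sums and using $\sum_{Q'\in\cQ} |Q'| \leq 1$,
$$ M \Bigl|\bigcup \cB\Bigr| \leq \sum_{Q\in\cB}\ \sum_{Q' \text{ carrier of } Q} |Q| = \sum_{Q'\in\cQ}\ \sum_{\substack{Q\in\cB \\ Q'\text{ carrier of }Q}} |Q| \leq (2\Lambda+1)^d.$$
Choosing $M := \lceil C_d(2\Lambda+1)^d/\eta\rceil + 1$, where $C_d$ is the comparability constant between $\HH^d$ and Lebesgue measure on $\RR^d$, yields $\HH^d\bigl(\bigcup\cB\bigr) < \eta$, giving condition (iii).

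For the coloring, enumerate $\cQ \setminus \cB$ as $Q_1, Q_2, \ldots$ with $\side(Q_1) \geq \side(Q_2) \geq \cdots$ (ties broken arbitrarily), and assign colors from $\{1, \ldots, M\}$ greedily: $Q_i$ receives any color not already used by a previously-colored $Q_j$ ($j < i$) with $\Lambda Q_j \cap \Lambda Q_i \neq \emptyset$. Every such $Q_j$ automatically has $\side(Q_j) \geq \side(Q_i)$ by the ordering, so is a carrier of $Q_i$; since $Q_i \notin \cB$, fewer than $M$ carriers of $Q_i$ exist in $\cQ$, so at most $M-1$ distinct colors are forbidden. An $M$-element palette therefore always contains a free color. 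Setting $\cQ_k$ to be the cubes assigned color $k$ gives the desired partition with $k_0 = M$, depending only on $\eta$, $\Lambda$, and $d$.

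The main obstacle is reconciling (ii) with (iii): many tiny cubes can accumulate near a single large one, and no bounded coloring can separate all of them, yet we can discard only a small-measure portion. The carrier bookkeeping resolves this cleanly, since having many carriers forces the cube to lie inside their $(2\Lambda+1)$-enlargements, whose total measure is controlled by almost-disjointness, while only carriers (not small neighbors) obstruct greedy coloring after processing in size-decreasing order.
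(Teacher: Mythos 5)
Your proof is correct, and it takes a genuinely different route from the paper's. The paper argues by iterated greedy maximal selection: it builds $\cQ_1$ by repeatedly adding the largest available cube whose $\Lambda$-dilate avoids those already chosen, then shows (via a Vitali-type covering estimate) that $\cQ_1$ captures a definite fraction $c=c(d,\Lambda)$ of the remaining measure, and iterates to get geometric decay of the leftover. Your approach is more combinatorial: you introduce the ``carrier'' relation, observe that it defines (for the size-ordered enumeration) a graph of bounded out-degree once over-carriered cubes are discarded, bound the measure of the discard set by a clean double count against almost-disjointness, and then run a single greedy coloring pass. Both give $k_0 = k_0(\eta,\Lambda,d)$; the paper's geometric decay yields $k_0 \sim (2\Lambda+1)^d \log(1/\eta)$, which is slightly sharper in $\eta$ than your $k_0 \sim (2\Lambda+1)^d/\eta$, but this difference is immaterial for the application. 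Your argument has the virtue of being a one-shot construction rather than an iteration, and the carrier bookkeeping isolates exactly which neighbors obstruct coloring, making the structure quite transparent. One small remark: the lemma statement writes $\HH^{n+m}$, but the ambient dimension is $d$; your use of $\HH^d$ is the intended reading.
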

\begin{proof}
It suffices to assume that the family $\cQ$ is finite, which we do for convenience.

Let $\cQ_1 \subseteq \cQ$ be a subset defined inductively as follows: Put the largest cube in $\cQ$ in $\cQ_1$, breaking ties arbitrarily. Then at each step add to $\cQ_1$ the largest cube $Q$ in $\cQ\setminus \cQ_1$ such that $\Lambda Q\cap \Lambda Q'=\emptyset$ for all $Q'\in \cQ_1$. Continue this process until no more cubes can be added to $\cQ_1$.

Denoting Lebesgue measure by $|\cdot|$, we will first prove the following:
\begin{claim}\label{claim:separation}
There is a constant $c=c(d,\Lambda )>0$ such that
$$ |\cup_{Q\in \cQ_1} Q| \geq c |\cup_{Q\in \cQ} Q|.$$
\end{claim}
\begin{proof}
Given $Q\in \cQ_1$, let
$$\cE_Q = \{Q'\in\cQ : \Lambda Q\cap \Lambda Q' \neq \emptyset \text{ and } \side(Q')\leq\side(Q)\}.$$
For each $Q\in\cQ_1$ and $Q'\in \cE_Q$, we have $Q' \subseteq (2\Lambda +1)Q$. Therefore,
$$ |\cup_{Q'\in \cE_Q} Q' | \leq |(2\Lambda +1)Q| = (2\Lambda +1)^d|Q|.$$

Now, if $Q'\in \cQ\setminus \cQ_1$, then $Q'\in \cE_Q$ for some $Q\in \cQ_1$. (Otherwise, $Q'$ would have been placed in $\cQ_1$.)

Therefore,
$$ |\cup_{Q'\in \cQ \setminus \cQ_1} Q'| \leq \sum_{Q\in \cQ_1} | \cup_{Q'\in \cE_Q} Q'| \leq (2\Lambda +1)^d  \sum_{Q\in \cQ_1} |Q|.$$
It follows that
$$|\cup_{Q\in \cQ} Q| =  |\cup_{Q\in \cQ_1} Q| + |\cup_{Q'\in \cQ \setminus \cQ_1} Q'| \leq (1+(2\Lambda +1)^d)|\cup_{Q\in \cQ_1} Q|, $$
which proves the claim.
\end{proof}

Once the claim is proven, the lemma follows: For each $i\geq 2$, apply the same construction to $\cQ \setminus \cup_{j=1}^{i-1}\cQ_j$ to obtain collections 
$$\cQ_i \subseteq \cQ \setminus \cup_{j=1}^{i-1}\cQ_j \subseteq \cQ$$
satisfying (ii) and having
$$ |\cup_{Q\in \cQ_i} Q| \geq c |\cup_{Q\in \cQ\setminus \cup_{j=1}^{i-1}\cQ_j } Q|.$$

It follows that, for each $i\geq 1$,
\begin{align*}
|\cup_{Q\in \cQ\setminus \cup_{j=1}^{i}\cQ_j } Q| &=  |\cup_{Q\in \cQ\setminus \cup_{j=1}^{i-1}\cQ_j } Q| - |\cup_{Q\in \cQ_i} Q|\\
&\leq (1-c) |\cup_{Q\in \cQ\setminus \cup_{j=1}^{i-1}\cQ_j } Q|
\end{align*}
Hence
$$ |\cup_{Q\in \cQ\setminus \cup_{j=1}^{i}\cQ_j } Q| \leq (1-c)^i|\cup_{Q\in \cQ} |Q| \leq (1-c)^i \text{ for each } i\geq 1$$
Thus, for $k_0$ sufficiently large depending on $\eta$ and $c=c(d,\Lambda)$ (and recalling the comparability of Lebesgue and Hausdorff measures), we can ensure the set
$$\cB =  \cQ\setminus \cup_{j=1}^{k_0}\cQ_j $$
has a union with total $\HH^{n+m}$-measure $<\eta$. This completes the proof of the lemma.

\end{proof}

\section{Cubes compressed in many directions}\label{sec:compressed}

In the proof of Theorem \ref{thm:hardersarder} (via Proposition \ref{prop:hbilip}), we will need to discard a collection of cubes which, while they may have small $\md$, are ``compressed'' by $f$ in many different directions. This collection will eventually form part of the ``garbage set'' $G$ in Proposition \ref{prop:hbilip} and Theorem \ref{thm:hardersarder}.

For the remainder of this section, we work under the following assumptions: $Q_0$ is the unit cube of $\RR^{n+m}$ and $f\colon Q_0 \rightarrow X$ is a $1$-Lipschitz map from $Q_0$ into a metric space. We will write $\md$ for $\md_f$ in this section and the following one.

Let $\PP$ be the set of all \textit{coordinate $n$-planes} in $\Gr(n,n+m)$. That is,
\begin{equation}\label{eq:PPdef}
 \PP = \{ \text{span}(\{e_{i_1}, e_{i_2}, \dots, e_{i_n}\}) : 1\leq i_1 < \dots < i_n \leq n+m\},
\end{equation}
where $e_i$ represent the $n+m$ standard basis vectors of $\RR^{n+m}$.

Recall the constant $C_0$ defined in \eqref{eq:C0def}. For each cube $Q\in \Delta$, we fix a seminorm $\|\cdot\|_Q$ on $\RR^{n+m}$ that minimizes $\md(C_0 Q)$.

Our goal in this section is to prove the following lemma.

\begin{lemma}\label{lem:compressedcontent}
Fix $\epsilon>0$ and $\delta>0$. Let $\QQ=\QQ(\epsilon,\delta)$ be the collection of all cubes such that
\begin{enumerate}[(i)]
\item $ \md(C_0 Q) < \epsilon$, and
\item for all $P\in \PP$, there is a unit vector $v_P\in P$ such that $\|v_P\|_Q < \delta$.
\end{enumerate}
Then
$$ \HH^{n,m}_\infty(f, \cup_{\QQ} (3Q)) \lesssim \epsilon+ \delta. $$ 
The implied constant depends only on $n$ and $m$.
\end{lemma}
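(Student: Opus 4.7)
The plan is to establish a pointwise bound
$$\HH^n_\infty(f(3Q)) \lesssim_{n,m} (\epsilon+\delta)\,\side(Q)^n \qquad (\star)$$
for every $Q\in\QQ$, and then sum over a maximal almost-disjoint subfamily of $\QQ$. Condition (i) lets us pass to the seminorm: since $3Q\subseteq C_0Q$, the definition of $\|\cdot\|_Q$ yields $d(f(x),f(y)) \le \|x-y\|_Q + \epsilon C_0\side(Q)$ for $x,y\in 3Q$, so any cover of $3Q$ by sets of $\|\cdot\|_Q$-diameter $r$ lifts to a cover of $f(3Q)$ by metric balls of diameter $\lesssim_{n,m} r + \epsilon\side(Q)$.

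The geometric core is a covering estimate for $3Q$ in the seminorm metric. I would first observe that under condition (ii) the small unit vectors $\{v_P\}_{P\in\PP}$ must span a subspace $W\subseteq\RR^{n+m}$ of dimension at least $m+1$: if they all lay in an $m$-dimensional $W_0$, its $(n+m)\times m$ generator matrix would need every $m\times m$ minor (indexed by the complements of the coordinate $n$-planes) to vanish, contradicting that its rank equals $m$. Then, by a greedy maximization of the Gram determinant among the $v_P$'s, I would select an $(m+1)$-tuple $v_{P_1},\dots,v_{P_{m+1}}$ forming a well-conditioned basis of an $(m+1)$-plane $W$, so that any $w=\sum c_i v_{P_i}\in W$ satisfies $\sum|c_i|\lesssim_{n,m}|w|$.

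With this in hand, let $\pi\colon\RR^{n+m}\to W^\perp$ be orthogonal projection onto the $(n-1)$-dimensional complement. Decomposing $w-w'=(w-w')_W+(w-w')_{W^\perp}$ and using $\|\cdot\|_Q\le|\cdot|$ together with the well-conditioned basis gives
$$\|w-w'\|_Q \lesssim_{n,m} |\pi(w)-\pi(w')| + \delta\,\side(Q) \qquad\text{for }w,w'\in 3Q.$$
Since $\pi(3Q)$ sits in the $(n-1)$-dimensional Euclidean space $W^\perp$, it is covered by $\lesssim_{n,m}(\side(Q)/r)^{n-1}$ Euclidean balls of radius $r$; the preimages intersected with $3Q$ yield a cover of $3Q$ by the same number of sets of $\|\cdot\|_Q$-diameter $\lesssim_{n,m} r+\delta\,\side(Q)$. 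Taking $r\asymp(\epsilon+\delta)\side(Q)$ and combining with the seminorm reduction proves $(\star)$.

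For the summation step, let $\{Q_i\}$ be the maximal cubes of $\QQ$; these are almost-disjoint with $\sum_i\side(Q_i)^{n+m}\le 1$ and $\bigcup 3Q_i=\bigcup_\QQ 3Q$. Tiling each $3Q_i$ by the $3^{n+m}$ dyadic cubes of side $\side(Q_i)$ it contains, and passing to maximal elements of the combined family, produces an admissible dyadic cover $\{R_k\}$ of $\bigcup_\QQ 3Q$ with $R_k\subseteq 3Q_{i(k)}$ and $\side(R_k)=\side(Q_{i(k)})$. Then
$$\HH^{n,m}_\infty\Big(f,\bigcup_{Q\in\QQ} 3Q\Big) \le \sum_k \HH^n_\infty(f(R_k))\side(R_k)^m \lesssim_{n,m} (\epsilon+\delta)\sum_i\side(Q_i)^{n+m} \lesssim_{n,m} \epsilon+\delta.$$
The main obstacle will be the linear-algebraic selection of a well-conditioned $(m+1)$-tuple from the $\binom{n+m}{n}$ small vectors: the dimension bound $\dim W\ge m+1$ follows cleanly from the minor argument, but ensuring the implicit constants depend only on $n$ and $m$ (and not on $Q$ or $\|\cdot\|_Q$) requires a quantitative combinatorial selection to keep the condition number of the chosen basis bounded.
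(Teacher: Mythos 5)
Your proposal follows the same overall structure as the paper's proof: identify an $(m+1)$-dimensional subspace $W$ on which the seminorm $\|\cdot\|_Q$ is uniformly small, project onto the $(n-1)$-dimensional complement $W^\perp$ to cover $f(3Q)$ efficiently, and then sum the resulting estimate over an almost-disjoint family of cubes. Your summation via tiling the triples of the maximal cubes of $\QQ$ is sound (the paper instead enlarges $\QQ$ to the family $\hat{\QQ}$ of cubes adjacent to a $\QQ$-cube of the same scale and takes maximal elements; both give the needed dyadic cover with $\sum\side(R_k)^{n+m}\lesssim 1$). The projection/covering step matches the paper's argument in spirit, modulo the cosmetic imprecision that $\|\cdot\|_Q\le|\cdot|$ is not quite right: the correct statement is $\|v\|_Q\le|v|+C_0\epsilon\,\side(Q)$ (Lemma \ref{lem:normbound}), which only adds an $O(\epsilon)$ term that your final choice $r\asymp(\epsilon+\delta)\side(Q)$ already absorbs. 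Your minor/rank argument that $\mathrm{span}\{v_P\}$ has dimension at least $m+1$ is in fact shorter and cleaner than the paper's inductive proof of the same fact (Lemma \ref{lem:coordplane1}).

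There is, however, a genuine gap exactly where you flag one. Knowing that $\mathrm{span}\{v_P\}$ has dimension $\ge m+1$ gives no quantitative control on the conditioning: \emph{a priori}, even the best $(m+1)$-tuple among the $v_P$ could have arbitrarily small Gram determinant, in which case the constant $C_1$ in $\sum|c_i|\lesssim C_1|w|$ blows up, and with it the implied constant in $(\star)$. The minor argument alone cannot rule this out, since vanishing of all the $m\times m$ minors is an open condition and you only get a qualitative contradiction. The paper supplies the missing quantitative input as Lemma \ref{lem:coordplane}, proved by compactness of $\Gr(m,n+m)$ and finiteness of $\PP$: there is a $c=c(n,m)>0$ so that for every $m$-plane $K$ some $P\in\PP$ has all its unit vectors at distance $\ge c$ from $K$. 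With this constant in hand, the paper runs a greedy selection with fixed threshold $c/2$ and uses Lemma \ref{lem:coordplane} again to argue that at least $m+1$ vectors must be selected before the process halts. Your Gram-determinant maximization would serve equally well once you prove that the maximum Gram determinant over $(m+1)$-tuples of the $v_P$ is bounded below by a constant depending only on $n,m$ --- but that statement is itself a compactness argument (the infimum over the compact product of spheres $\prod_{P\in\PP}(P\cap S^{n+m-1})$ of a positive continuous function), essentially equivalent to Lemma \ref{lem:coordplane}. So the missing step is real and is precisely the content of Lemma \ref{lem:coordplane}; filling it in completes your argument.
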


As a first step, we need the following linear algebra fact.

\begin{lemma}\label{lem:coordplane1}
Let $K$ be a subspace of dimension $\leq m$ in $\RR^{n+m}$. Then $K \cap P= \{0\}$ for at least one $P\in\PP$.

In other words, $K$ cannot contain a non-zero vector in every coordinate $n$-plane.
\end{lemma}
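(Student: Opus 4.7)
The plan is to recast the condition $K\cap P = \{0\}$ as the injectivity of a coordinate projection, and then deduce the existence of such a projection by a rank argument on a matrix whose rows form a basis of $K$.

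First I would set $k = \dim K$, so $k\le m$. A coordinate $n$-plane $P = \mathrm{span}\{e_{i_1},\dots,e_{i_n}\}\in\PP$ has orthogonal complement $P^\perp = \mathrm{span}\{e_j : j\in S\}$, where $S := \{1,\dots,n+m\}\setminus\{i_1,\dots,i_n\}$ is the complementary set of $m$ indices. The condition $K\cap P = \{0\}$ is equivalent to saying that the coordinate projection $\pi_S\colon \RR^{n+m}\to\RR^m$ onto the coordinates indexed by $S$ is injective on $K$. So it suffices to find a subset $S\subset\{1,\dots,n+m\}$ of size $m$ such that $\pi_S|_K$ is injective; the desired plane is then $P = \mathrm{span}\{e_j : j\notin S\}$, which has dimension $n$.

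Next, I would fix a basis $v_1,\dots,v_k$ of $K$ and form the $k\times(n+m)$ matrix $M$ whose rows are the $v_j$. Since the rows of $M$ are linearly independent, $M$ has row rank $k$, hence column rank $k$, so there exist indices $i_1,\dots,i_k\in\{1,\dots,n+m\}$ such that the corresponding $k$ columns of $M$ are linearly independent. Because $k\le m$, I can enlarge $\{i_1,\dots,i_k\}$ to a subset $S\subset\{1,\dots,n+m\}$ with $|S| = m$.

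Finally, I would observe that the vectors $\pi_S(v_1),\dots,\pi_S(v_k)$ form the rows of the $k\times m$ submatrix of $M$ obtained by keeping only the columns indexed by $S$; this submatrix still contains the $k$ linearly independent columns $i_1,\dots,i_k$, so it has rank $k$, and its rows are therefore linearly independent in $\RR^m$. Hence $\pi_S|_K$ is injective and the corresponding $P$ satisfies $K\cap P = \{0\}$. I do not anticipate any real obstacle — the statement is an elementary fact about ranks, and the only item needing a bit of care is the bookkeeping that turns a choice of $m$ coordinates into a coordinate $n$-plane via the complementary set of indices.
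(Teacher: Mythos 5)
Your proof is correct, and it takes a genuinely different route from the paper. You recast the condition $K\cap P=\{0\}$ as injectivity of the coordinate projection $\pi_S$ onto the complementary $m$-element index set $S$, and then locate $S$ by the standard rank fact: a $k\times(n+m)$ matrix whose rows form a basis of $K$ has $k$ linearly independent columns, and any $m$-element index set $S$ containing those column indices makes $\pi_S|_K$ injective (since the corresponding $k\times m$ submatrix still has rank $k$). The paper instead argues by induction on $n$: it finds a coordinate vector $e_{i_0}$ with $K\cap\vspan\{e_{i_0}\}=\{0\}$ (possible since $\dim K=m<n+m$), projects $K$ onto the remaining coordinates to get an $m$-plane in $\RR^{(n-1)+m}$, applies the inductive hypothesis to get an $(n-1)$-element coordinate set avoiding the projected plane, and adjoins $e_{i_0}$. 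Your argument is shorter and avoids induction by appealing directly to the equality of row and column rank; the paper's argument is more stepwise but uses only the observation that a proper subspace must miss some coordinate axis. Both are fully rigorous, and you correctly identified the small bookkeeping point (passing from the $m$-element set $S$ to the complementary coordinate $n$-plane $P$).
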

\begin{proof}
It suffices to show that an $m$-plane $K$ in $\RR^{n+m}$ cannot contain a non-zero vector in every coordinate $n$-plane. The proof is by induction on $n$, for each fixed $m\geq 1$. 

First, suppose that $n=1$, and that $K\in \Gr(m,1+m)$ contains a non-zero vector in every coordinate $1$-plane. Then $K$ contains a non-zero multiple of every standard basis vector $e_i$, which implies that $K \supseteq \vspan(\{e_i:i=1\dots 1+m\}) = \RR^{1+m}$ and yields a contradiction.

Now suppose that $n\geq 1$ and $K\in \Gr(m,n+m)$. As above, there must be a standard basis vector $e_{i_0}$ such that $K \cap \vspan(\{e_{i_0}\}) = \{0\}$; if not, $K$ would be all of $\RR^{n+m}$. Let
$$ V = \vspan\{ e_1, e_2, \dots, e_{i_0 - 1}, e_{i_0 + 1}, \dots, e_{n+m}\} .$$
Let $K' = \pi_V(K)$. Since $K \cap \ker(\pi_V) = \{0\}$, the space $K'$ is an $m$-dimensional subspace of $V\cong\RR^{(n-1)+m}$.

By induction, $K'$ cannot contain a vector in each coordinate $(n-1)$-plane of $V$. In other words, there is a collection
$$ \{e_{i_1}, \dots, e_{i_{n-1}}\}$$
of standard basis vectors (none of which are $e_{i_0}$) such that 
$$ K' \cap \vspan( \{e_{i_1}, \dots, e_{i_{n-1}}\} ) = \{0\}.$$

Let
$$ P = \vspan(\{e_{i_0}\} \cup \{e_{i_1}, \dots, e_{i_{n-1}}\} ) \in \PP.$$
This is a coordinate $n$-plane in $\RR^{n+m}$. If $v$ is a non-zero vector in $K\cap P$, then
$$ \pi_V(v) \in K' \cap  \vspan( \{e_{i_1}, \dots, e_{i_{n-1}}\} ) = \{0\},$$
and so $v \in \vspan(\{e_{i_0}\})$. But $e_{i_0}$ was chosen above so that $K \cap \vspan(\{e_{i_0}\}) = \{0\}$, so $v$ must be zero. 

Thus, $K \cap P = \{0\}$, as desired.

\end{proof}

A compactness argument then yields the following quantitative version of the previous lemma.
\begin{lemma}\label{lem:coordplane}
For each $n,m\geq 1$, there is a constant $c=c(n,m)$ with the following property:

Let $K$ be a subspace of dimension $\leq m$ in $\RR^{n+m}$. Then there is a coordinate $n$-plane $P\in\PP$ such that 
$$ \dist(w, K) \geq c > 0$$
for all unit vectors $w\in P$.
\end{lemma}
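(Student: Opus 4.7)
The plan is a standard compactness argument upgrading Lemma \ref{lem:coordplane1} to a quantitative statement.

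First I would reduce to the case $\dim K = m$. If $\dim K < m$, then $K$ is contained in some $m$-dimensional subspace $K'$, and $\dist(w, K') \leq \dist(w, K)$ for every $w$. Hence any lower bound on $\dist(w, K')$ as $P$ ranges over $\PP$ and $w$ ranges over the unit sphere of the chosen $P$ will also be a lower bound for $K$. So it suffices to prove the lemma for $K \in \Gr(m, n+m)$.

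Next, define
\[
 \Phi \colon \Gr(m, n+m) \to [0, \infty), \qquad
 \Phi(K) := \max_{P \in \PP}\; \min_{\substack{w \in P \\ |w| = 1}} \dist(w, K).
\]
For each fixed $P \in \PP$, the inner quantity $K \mapsto \min_{w \in P, |w|=1} \dist(w,K)$ is continuous on $\Gr(m,n+m)$ with respect to the metric $D$ introduced in subsection \ref{subsec:grassmannian}: if $D(K, K') < \varepsilon$ then $|\dist(w,K) - \dist(w,K')| < \varepsilon$ for every unit vector $w$, so the minimum over the compact unit sphere of $P$ moves by at most $\varepsilon$. Since $\PP$ is a finite set, taking the maximum over $P \in \PP$ preserves continuity, so $\Phi$ is continuous.

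Now observe that $\Phi(K) > 0$ for every $K \in \Gr(m, n+m)$. Indeed, Lemma \ref{lem:coordplane1} supplies a $P \in \PP$ with $K \cap P = \{0\}$. The unit sphere of $P$ is compact and disjoint from the closed subspace $K$, so $\min_{w \in P, |w|=1} \dist(w, K) > 0$, and hence $\Phi(K) > 0$.

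Finally, $\Gr(m, n+m)$ is compact in the metric $D$. A continuous strictly positive function on a compact space attains a positive minimum, so there exists $c = c(n,m) > 0$ with $\Phi(K) \geq c$ for all $K \in \Gr(m, n+m)$. Combined with the reduction in the first step, this proves the lemma. There is no real obstacle here beyond carefully invoking continuity and compactness on $\Gr(m, n+m)$; the main content lies already in Lemma \ref{lem:coordplane1}.
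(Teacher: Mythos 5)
Your argument is correct and is essentially the same compactness argument as in the paper: both rely on the compactness of $\Gr(m,n+m)$ in the metric $D$ together with the qualitative Lemma \ref{lem:coordplane1}, with the paper phrasing it as a sequential-compactness contradiction while you phrase it as a continuous positive function on a compact space attaining a positive minimum.
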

\begin{proof}
First of all, it clearly suffices to prove the lemma assuming that $\dim(K)=m$.

Suppose that the lemma were false for some fixed $n,m\geq 1$.

Then there would be a sequence $K_j\in \Gr(m,n+m)$ such that, for every $P\in \PP$, there is a unit vector $w^P_j \in P$ with
$$ \dist(w^P_j, K_j) < \frac{1}{j}.$$

The sequence $K_j$ has a subsequence converging in the metric $D$ defined in subsection \ref{subsec:grassmannian} to a subspace $K\in \Gr(m,n+m)$. In addition, we may pass to further subsequences for which each sequence $\{w^P_j\}_{j=1}^\infty$ converges to a unit vector $w^P\in P$. It then follows that
$$ w^P \in K\cap P.$$
Thus, $K$ is an $m$-plane containing a non-zero (indeed, unit) vector in every $P\in\PP$. This contradicts Lemma \ref{lem:coordplane1}. 
\end{proof}

We now prove Lemma \ref{lem:compressedcontent}.

\begin{proof}[Proof of Lemma \ref{lem:compressedcontent}]
Recall the definition of $C_0$ from \eqref{eq:C0def}, and let $c = c(n,m)$ denote the constant from Lemma \ref{lem:coordplane}.

We may assume in proving the lemma that $\epsilon$ and $\delta$ are both small, depending on $n$ and $m$, e.g., that
\begin{equation}\label{eq:wlogsmall}
 \delta + \epsilon < c(100 C_0)^{-1},
\end{equation}
otherwise the lemma is trivial.

Recall that if a cube $Q$ is in the collection $\QQ$ defined in the statement of Lemma \ref{lem:compressedcontent}, then $$\md(C_0Q)<\epsilon$$ and for every $P\in \PP$, there is unit vector $v_P\in P$ such that 
\begin{equation}\label{eq:vp}
\|v_P\|_Q < \delta.
\end{equation}

We establish Lemma \ref{lem:compressedcontent} via some intermediary claims.

\begin{claim}\label{claim:kernel}
If $Q\in \QQ$, there is an $(m+1)$-plane $K_0\in \Gr(m+1,n+m)$ such that 
\begin{equation}\label{eq:kernel}
\|v\|_{Q} \leq C_1\delta|v| \text{ for all } v\in K_0,
\end{equation}
where $C_1$ is a constant depending only on $n$ and $m$.
\end{claim}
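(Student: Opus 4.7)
The plan is to build an $(m+1)$-dimensional subspace directly from the vectors $v_P$ provided by the hypothesis, by choosing them iteratively so that they are ``quantitatively linearly independent''. The $\md$ hypothesis is not needed for this particular claim; only the condition that some unit vector in each coordinate $n$-plane has $\|\cdot\|_Q$-seminorm less than $\delta$.

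First I would select $P_1, \dots, P_{m+1}\in\PP$ inductively using Lemma \ref{lem:coordplane}, which is the quantitative form of the coordinate-$n$-plane lemma. Let $P_1$ be arbitrary and choose the unit vector $v_{P_1}\in P_1$ given by the hypothesis. Inductively, given $v_{P_1},\dots,v_{P_{i-1}}$, let $V_{i-1}=\vspan(v_{P_1},\dots,v_{P_{i-1}})$, which has dimension at most $i-1\leq m$. Lemma \ref{lem:coordplane} then supplies a coordinate $n$-plane $P_i\in\PP$ such that every unit vector $w\in P_i$ satisfies $\dist(w,V_{i-1})\geq c$, where $c=c(n,m)>0$. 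Applying the hypothesis on $Q$ to this $P_i$ gives a unit vector $v_{P_i}\in P_i$ with $\|v_{P_i}\|_Q<\delta$, and by construction $\dist(v_{P_i},V_{i-1})\geq c$. After $m+1$ steps, set $K_0=\vspan(v_{P_1},\dots,v_{P_{m+1}})$, which has dimension $m+1$.

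The second step is to convert the geometric lower bound $\dist(v_{P_i},V_{i-1})\geq c$ into a bound on the $\ell^1$-norm of coefficients. Performing Gram--Schmidt on the ordered tuple $(v_{P_1},\dots,v_{P_{m+1}})$ yields an orthonormal basis $e_1,\dots,e_{m+1}$ of $K_0$ in which each $v_{P_j}$ has coordinates $(a_{1,j},\dots,a_{j,j},0,\dots,0)$, with $|a_{j,j}|=\dist(v_{P_j},V_{j-1})\geq c$ and $|a_{i,j}|\leq 1$ for $i<j$ since $|v_{P_j}|=1$. Thus the upper-triangular change-of-basis matrix $A=(a_{i,j})$ has diagonal entries of absolute value at least $c$ and off-diagonal entries at most $1$, so $\|A^{-1}\|$ is bounded by a constant $C_2=C_2(n,m)$. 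Consequently, if $v\in K_0$ is written as $v=\sum_{i=1}^{m+1}c_i v_{P_i}$, then $\sum_i|c_i|\leq C_2(n,m)\,|v|$.

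The conclusion then follows by the triangle inequality for the seminorm:
\[
\|v\|_Q \;\leq\; \sum_{i=1}^{m+1}|c_i|\,\|v_{P_i}\|_Q \;<\; \delta\sum_{i=1}^{m+1}|c_i| \;\leq\; C_2(n,m)\,\delta\,|v|,
\]
which gives \eqref{eq:kernel} with $C_1=C_2(n,m)$. The only delicate step is the quantitative linear independence of the chosen $v_{P_i}$; this is precisely what Lemma \ref{lem:coordplane} was set up to provide, and it is the reason one needs the quantitative (compactness) version rather than the merely qualitative Lemma \ref{lem:coordplane1}. Everything else is triangular-matrix linear algebra.
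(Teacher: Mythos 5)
Your proof is correct and takes essentially the same approach as the paper: both rely on Lemma~\ref{lem:coordplane} to produce $m+1$ quantitatively linearly independent unit vectors $v_{P_i}$ with $\|v_{P_i}\|_Q<\delta$, and then bound $\sum_i|c_i|\lesssim_{n,m}|v|$ so that the triangle inequality for the seminorm closes the argument. The only organizational difference is that you construct the $m+1$ vectors by a direct forward iteration (invoking Lemma~\ref{lem:coordplane} at each step to choose the next coordinate plane), whereas the paper takes a maximal quantitatively-independent subset of $\{v_P\}$ and then uses Lemma~\ref{lem:coordplane} in a contradiction argument to show that this set has at least $m+1$ elements; your explicit Gram--Schmidt / triangular-matrix bound on the coefficients also makes rigorous a step the paper merely asserts.
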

\begin{proof}
For each $P\in\PP$, there is a unit vector $v_P\in P$ satisfying  \eqref{eq:vp}. Fix $c'=\frac{c}{2}$. Let $S=\{v_1, \dots, v_\ell\}$ be a maximal subset of $\{v_P:P\in \PP\}$ that satisfies
\begin{equation}\label{eq:Sdef}
\dist(v_i, \text{span}(\{v_1, \dots, v_{i-1}\}) \geq c' \text{ for each } i\in\{1,\dots, \ell\}.
\end{equation}
In other words, $S$ is a maximal ``quantitatively linearly independent'' subset of $\{v_P:P\in\mathcal{P}\}$ (with parameter $c'$).

We will show that 
$$ K = \text{span}(S) $$
has $\dim(K) \geq m+1$ and satisfies \eqref{eq:kernel}.

First, we argue that $K$ satisfies \eqref{eq:kernel}. Any $v\in K$ can be written as
$$ v = \sum_{i=1}^\ell a_i v_i$$
where
$$ \sum_{i=1}^\ell |a_i| \leq C_1|v|,$$
{because $S$ satisfies \eqref{eq:Sdef}.} 
Here $C_1$ is a constant depending on $n$, $m$, and $c'$, and thus ultimately only on $n$ and $m$.

Hence
$$ \|v\|_{Q} \leq \sum_{i=1}^\ell |a_i| \|v_i\|_{Q}  \leq \delta \sum_{i=1}^\ell |a_i|   \leq C_1\delta |v|,$$
where $C_1$ depends only on $n$ and $m$. This proves that $K$ satisfies \eqref{eq:kernel}.

Now we show that $\dim(K)\geq m+1$. Suppose, towards a contradiction, that $\dim(K) \leq m$. Then by Lemma \ref{lem:coordplane}, there is a coordinate $n$-plane $P\in\PP$ such that
\begin{equation}\label{eq:wfar}
 \dist(w, K) \geq c > 0
\end{equation}
for all unit vectors $w\in P$.

Let $v_P$ be the unit vector associated to $P$, as in \eqref{eq:vp}. In particular, since $v_P\in P$, we have
$$ \dist(v_P, K) \geq c > 0.$$

In that case, however, we should have appended $v_P$ to $S$. In other words, this contradicts the maximality of the set $S$ defined above.

Therefore, $K$ must have dimension at least $m+1$, and it satisfies \eqref{eq:kernel}. It follows that $K$ contains an $(m+1)$-dimensional subspace $K_0$ that satisfies \eqref{eq:kernel}, and hence proves Claim \ref{claim:kernel}.
 
\end{proof}

\begin{claim}\label{claim:smallimage}
Let $Q\in \QQ$ and let $R$ be either $Q$ or a neighbor of $Q$ of the same scale. Then 
$$ \HH^{n}_\infty(f(R)) \lesssim (\delta+\epsilon)\side(R)^n,$$
where the implied constant depends only on $n$ and $m$.
\end{claim}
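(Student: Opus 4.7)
The plan is to exploit Claim \ref{claim:kernel} to reduce the image $f(R)$ to something essentially $(n-1)$-dimensional, then cover it.

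First, I would set up the geometry. Claim \ref{claim:kernel} gives an $(m+1)$-plane $K_0 \subset \RR^{n+m}$ with $\|v\|_Q \leq C_1\delta |v|$ for all $v \in K_0$. Let $V := K_0^\perp$, an $(n-1)$-dimensional subspace, and let $\pi\colon \RR^{n+m} \to V$ be the orthogonal projection. Since $C_0 = 10(n+m) \geq 3$, the cube $R$ (whether equal to $Q$ or a same-scale neighbor) satisfies $R \subset 3Q \subset C_0 Q$, and any vector $v = y - x$ with $x,y \in R$ has $|v| \leq 3\sqrt{n+m}\,\side(Q) < C_0\side(Q)$.

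The key estimate I would prove next is that, for all $x,y \in R$,
\begin{equation}\label{eq:planest}
 d(f(x),f(y)) \leq |\pi(y-x)| + C_2(\epsilon + \delta)\side(Q),
\end{equation}
with $C_2 = C_2(n,m)$. This follows by combining three ingredients: (i) the hypothesis $\md(C_0Q) < \epsilon$ gives $|d(f(x),f(y)) - \|y-x\|_Q| \leq C_0 \epsilon\,\side(Q)$; (ii) decomposing $y - x = \pi(y-x) + w$ with $w \in K_0$, the triangle inequality for the seminorm yields $\|y-x\|_Q \leq \|\pi(y-x)\|_Q + \|w\|_Q$; (iii) Lemma \ref{lem:normbound} bounds $\|\pi(y-x)\|_Q \leq |\pi(y-x)| + C_0\epsilon\,\side(Q)$, and Claim \ref{claim:kernel} bounds $\|w\|_Q \leq C_1\delta |w| \leq 3C_1\sqrt{n+m}\,\delta\,\side(Q)$. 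Collecting terms gives \eqref{eq:planest}.

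With \eqref{eq:planest} in hand, the covering argument is straightforward. Since $\pi(R) \subset V$ has diameter $\lesssim \side(Q)$ in an $(n-1)$-dimensional Euclidean space, it can be covered by a family of balls $B_1,\ldots,B_N \subset V$ of radius $r := (\epsilon+\delta)\side(Q)$ with $N \lesssim_{n,m} (\side(Q)/r)^{n-1} = (\epsilon+\delta)^{-(n-1)}$. For each $i$, \eqref{eq:planest} implies that $f(R \cap \pi^{-1}(B_i))$ has diameter at most $2r + C_2(\epsilon+\delta)\side(Q) \lesssim (\epsilon+\delta)\side(Q)$, so it lies in a ball in $X$ of that diameter. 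Summing,
\[
 \HH^n_\infty(f(R)) \lesssim N \cdot \bigl((\epsilon+\delta)\side(Q)\bigr)^n \lesssim (\epsilon+\delta)\,\side(Q)^n,
\]
which is the desired bound (using $\side(R) = \side(Q)$). The only real obstacle here is the bookkeeping in \eqref{eq:planest}; all the substance has already been done in Claim \ref{claim:kernel}, which produced a genuinely $(m+1)$-dimensional ``near-kernel'' of $\|\cdot\|_Q$ and thereby collapsed the effective dimension of $f(R)$ from $n+m$ to $n-1$.
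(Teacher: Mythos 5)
Your proof is correct and follows essentially the same route as the paper's: both invoke Claim \ref{claim:kernel} to produce the $(m+1)$-plane $K_0$, project to the $(n-1)$-dimensional orthogonal complement, and use the $\delta$-compression along $K_0$ plus the $\epsilon$-smallness of $\md(C_0Q)$ to cover $f(R)$ by $\lesssim (\epsilon+\delta)^{-(n-1)}$ balls of radius $\approx(\epsilon+\delta)\side(Q)$. The only cosmetic difference is that the paper projects each point of $R$ to a single fixed affine $(n-1)$-plane $P_0$ through the center of $R$ and shows $f(R)$ lies in a neighborhood of $f(P_0\cap C_0 Q)$, whereas you prove the pointwise estimate $d(f(x),f(y))\leq |\pi(y-x)| + C_2(\epsilon+\delta)\side(Q)$ and then partition $R$ by preimages of small balls in $K_0^{\perp}$; these are equivalent ways of organizing the same argument.
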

\begin{proof}[Proof of Claim \ref{claim:smallimage}]
Assume by rescaling that $\side(Q)=\side(R)=1$. Let $K_0$ be an $(m+1)$-plane as in Claim \ref{claim:kernel}, so that
$$\|v\|_{Q} \leq C_1 \delta |v| \text{ for all } v\in K_0.$$
Let $P = K_0^{\bot}$, which is an $(n-1)$-plane in $\RR^{n+m}$.

Let $P_0$ be an affine $(n-1)$-plane parallel to $P$ and passing through the center of $R$. Fix $x\in R$ and let $x_{P_0}$ denote the closest point to $x$ in $P_0$. Note that $x$ and $x_{P_0}$ are in $C_0Q$ by our choice of $C_0$ in \eqref{eq:C0def}. 

We have
$$ d(f(x) , f(x_{P_0})) \leq \|x-x_{P_0}\|_{Q} + C_0\epsilon \lesssim C_1C_0\delta + C_0\epsilon \leq C_2(\delta+\epsilon),$$
since $x-x_{P_0}\in K_0$ and $|x-x_{P_0}|\leq \diam(C_0Q)\lesssim C_0$. Here $C_2$ is again a constant depending only on $n$ and $m$.

Since $x\in R$ was arbitrary, we have shown that
$$ f(R) \subseteq N_{C_2(\delta+\epsilon)}f(P_0 \cap C_0Q).$$ 
Given $r>0$, we can cover $P_0 \cap C_0Q$ by $\lesssim r^{-(n-1)}$ balls of radius $r$, with implied constant depending only on $n$ and $m$. Hence, we can cover $f(R)$ by $\lesssim r^{-(n-1)}$ balls of radius $r+C_2(\delta+\epsilon)$. Choosing $r=C_2(\delta+\epsilon)$ allows us to bound
$$ \HH^n_\infty(f(R)) \lesssim (C_2(\delta+\epsilon))^{-(n-1)}(2C_2(\delta+\epsilon))^n \lesssim \delta+\epsilon.$$
This proves Claim \ref{claim:smallimage}.
\end{proof}

We now use Claim \ref{claim:smallimage} to complete the proof of Lemma \ref{lem:compressedcontent}. Let $\hat{\QQ}$ denote the collection of all cubes that are either in $\QQ$, or adjacent to an element of $\QQ$ of the same scale. Let $\{R_j\}$ enumerate all maximal cubes of $\hat{Q}$. Note that $\cup_{Q\in \QQ} (3Q) \subseteq \cup_{j} R_j$. Thus, using Claim \ref{claim:smallimage},
\begin{align*}
\HH^{n,m}_\infty(f,\cup_{\QQ} (3Q) ) &\leq \sum_j \HH^n_\infty(f(R_j))\side(R_j)^m\\
&\lesssim (\delta+\epsilon)\sum \side(R_j)^{n+m}\\
&\leq \delta + \epsilon.
\end{align*}
This completes the proof of Lemma \ref{lem:compressedcontent}.
\end{proof}

\section{Adding projections to form bi-Lipschitz maps}\label{sec:hbilip}

Our goal in this section is to prove Proposition \ref{prop:hbilip}. Thus, we will decompose the unit cube into pieces on which, after a bi-Lipschitz change of coordinates, our given Lipschitz map can be supplemented by the projection to the $x$-axis to form a bi-Lipschitz map.

For the entirety of Section \ref{sec:hbilip}, let $Q_0$ be the unit cube of $\RR^{n+m}$ and $f\colon Q_0 \rightarrow X$ be a $1$-Lipschitz mapping into a metric space. 
\begin{remark}
No condition on $\HH^n(f(Q_0))$ is assumed for this particular section.
\end{remark}

These are the same standing assumptions as in Section \ref{sec:compressed}.

Recall the definition of the collection of coordinate $n$-planes $\PP$ in \eqref{eq:PPdef}. Let $P_0\in\PP$ denote the span of the first $n$ standard basis vectors in $\RR^{n+m}$. Thus, if we write $\RR^{n+m}=\RR^n \times \RR^m$ in the usual way, then $\pi_{P_0}(x,y)=x$ and $\pi_{P_0^{\bot}}(x,y)=y$.

Note that the maps 
$$ (x,y) \mapsto (f(\phi_i^{-1}(x,y)), y),$$
described in Proposition \ref{prop:hbilip}, which we will discuss below, can be written using this notation as
$$(f\circ \phi_i^{-1}, \pi_{P_0^{\bot}}).$$

As one final remark before beginning the proof, part of the conclusion of Proposition \ref{prop:hbilip} is that the sets $F_i$ and $G$ we construct will be Borel. However, if $F_i$ is one of the sets in the proposition, then $\overline{F_i}$ has the same property, so this will not be a concern in the remainder of the proof.

We now begin the proof of Proposition \ref{prop:hbilip} in earnest, which will take a number of steps. Fix a parameter $\alpha>0$ as in the statement of the proposition.

A number of constants, such as $C_0$ defined in \eqref{eq:C0def}, depend only on $n$ and $m$, and will often be suppressed using the $\lesssim_{n,m}$ notation. 

At the moment we fix 
\begin{equation}\label{eq:alpha'def}
\alpha'=\alpha/(10C),
\end{equation}
where $C\geq 1$ is larger than the implied constant in Lemma \ref{lem:areaboundscontent}, which depends only on $n+m$. Thus, $\alpha'$ depends only on $\alpha,n,m$.

The most important further positive constants we will define will be $\delta$, chosen sufficiently small depending only on $\alpha$, $n$, and $m$; $\delta'$ chosen sufficiently small depending only on $\alpha,n, m,$ and $\delta$; and $\epsilon$, chosen sufficiently small depending on $\alpha,n,m,\delta,$ and $\delta'$. The needed requirements will be specified in the course of the proof. A number of other constants will be defined based on these as we go.

Recall that for each cube $Q\in \Delta$, we have fixed a seminorm $\|\cdot\|_Q$ that minimizes the quantity $\md(C_0Q)$.

We now define three sub-collections of cubes in $\Delta$ that we will use in the remainder of the proof.

\begin{definition}\label{def:cubefamilies}
Given positive constants $\delta$ and $\epsilon$ as above, define:
\begin{itemize}
\item $\QQ_{\md} := \{Q\in \Delta : \md(C_0Q)\geq \epsilon\}$.
\item $\QQ_{\text{compressed}}:=\{Q\in \Delta: \md(C_0 Q) < \epsilon \text{ and each plane } P\in \PP \text{ contains a unit vector } v_P\in P\text{ with } \|v_P\|_Q < \delta\}$.
\item $\QQ_{\text{good}}:=\Delta \setminus (\QQ_{\md} \cup \QQ_{\text{compressed}})$.
\end{itemize}
\end{definition}

Thus, the fact that a given cube $Q \in \QQ_{\text{good}}$ means that
$$ \md(C_0Q) < \epsilon$$
and that there is an $n$-plane $P_{Q} \in \PP$ such that
\begin{equation}\label{eq:PQdef}
\|v\|_{Q} \geq \delta \text{ for all } v\in P_{Q}.
\end{equation}

We assign to each cube $Q\in\QQ_{\text{good}}$ a fixed coordinate $n$-plane $P_Q\in \PP$ with the above property.

\subsection{Initial decomposition of $Q_0$ into starting cubes}

We begin with the following initial decomposition of $Q_0$.

\begin{lemma}\label{lem:initial}
There is a constant $K_1 = K_1(\alpha,\delta,\epsilon,n,m)$ such that
$$\HH^{n,m}_\infty\left(f, \bigcup_{Q\in\Delta_{k} \setminus \QQ_{\text{good}}} Q\right) < \alpha'$$
for some $k\leq K_1$.
\end{lemma}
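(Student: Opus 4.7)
The plan is to split $\Delta_k \setminus \QQ_{\text{good}}$ into its two constituent subfamilies $\QQ_{\md} \cap \Delta_k$ and $\QQ_{\text{compressed}} \cap \Delta_k$, bound each contribution to the mapping content separately, and combine using countable subadditivity (Lemma \ref{lem:subadditive}). The key observation is that the compressed piece is small at \emph{every} scale $k$ (via Lemma \ref{lem:compressedcontent}), whereas the $\QQ_{\md}$ piece only has small \emph{total} measure summed over all scales, so a pigeonhole across scales is what will produce a single good $k$ for that part.

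For the $\QQ_{\text{compressed}}$ contribution, observe that $\bigcup_{Q \in \QQ_{\text{compressed}} \cap \Delta_k} Q \subseteq \bigcup_{Q \in \QQ_{\text{compressed}}} 3Q$, so monotonicity of $\HH^{n,m}_\infty$ (immediate from the definition, since any dyadic cover of the larger set also covers the smaller) together with Lemma \ref{lem:compressedcontent} gives
$$\HH^{n,m}_\infty\!\left(f,\; \bigcup_{Q \in \QQ_{\text{compressed}} \cap \Delta_k} Q\right) \lesssim_{n,m} \delta + \epsilon.$$
Since $\delta$ and $\epsilon$ have been chosen small enough depending only on $\alpha,n,m$, we may arrange this bound to be $< \alpha'/2$, and crucially the estimate holds at every scale.

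For the $\QQ_{\md}$ contribution I first extend $f$ to a $1$-Lipschitz map on all of $\RR^{n+m}$ via Kuratowski embedding $X \hookrightarrow \ell_\infty(X)$ followed by a componentwise McShane extension, exactly as in the proof of Lemma \ref{lem:coding}. Theorem \ref{thm:quantdiff} then yields $\sum_{Q \in \QQ_{\md}} |Q| \leq C_{\epsilon, n+m}$. Writing $A_k := \sum_{Q \in \QQ_{\md} \cap \Delta_k} |Q|$, we have $\sum_{k \geq 1} A_k \leq C_{\epsilon,n+m}$, so setting $K_1 := \lceil 2C C_{\epsilon,n+m}/\alpha' \rceil$ (with $C$ the dimensional constant from Lemma \ref{lem:areaboundscontent}), the pigeonhole principle produces some $k \leq K_1$ with $A_k < \alpha'/(2C)$. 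For that $k$, Lemma \ref{lem:areaboundscontent} gives
$$\HH^{n,m}_\infty\!\left(f,\; \bigcup_{Q \in \QQ_{\md} \cap \Delta_k} Q\right) \leq C \cdot A_k < \alpha'/2.$$
Adding the two bounds by subadditivity yields the desired total $< \alpha'$, and $K_1$ depends only on $\alpha, n, m, \epsilon$, as required. There is really no obstacle: the lemma reduces directly to quantitative differentiation (for the $\md$-large cubes) plus the previously established Lemma \ref{lem:compressedcontent} (for the compressed cubes), with a scale pigeonhole marrying them together.
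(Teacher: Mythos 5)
Your proof is correct and follows essentially the same route as the paper: split off the $\QQ_{\text{compressed}}$ part (bounded at every scale via Lemma \ref{lem:compressedcontent}) from the $\QQ_{\md}$ part (bounded at some scale via Theorem \ref{thm:quantdiff} and pigeonhole), then combine with Lemma \ref{lem:subadditive}. The only cosmetic differences are that you explicitly spell out the Lipschitz extension of $f$ to $\RR^{n+m}$ (which the paper leaves implicit here, having noted it in Lemma \ref{lem:coding}) and that you track the dimensional constant from Lemma \ref{lem:areaboundscontent} a bit more explicitly.
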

\begin{proof}
Fix $K_1\in\mathbb{N}$ arbitrary for the moment. 

Then, using Theorem \ref{thm:quantdiff}, 
$$ \sum_{k=1}^{K_1} \sum_{Q\in \Delta_k \cap \QQ_{\md}} |Q| \lesssim_{n,m,\epsilon} 1.$$

Therefore, for some choice of $1\leq k \leq K_1$, 
$$ \HH^{n,m}_\infty(f, \bigcup_{Q\in \Delta_{k}  \cap \QQ_{\md}} Q) \leq  \sum_{Q\in \Delta_k \cap \QQ_{\md}} |Q| \lesssim_{n,m,\epsilon} \frac{1}{K_1}.$$

Furthermore, Lemma \ref{lem:compressedcontent} implies that
$$ \HH^{n,m}_\infty\left(f, \bigcup_{Q\in\Delta_{k}  \cap \QQ_{\text{compressed}}} Q\right) \lesssim_{n,m} \delta+\epsilon.$$
It follows that
$$\HH^{n,m}_\infty\left(f, \bigcup_{Q\in\Delta_{k} \setminus \QQ_{\text{good}}} Q\right) < \alpha'$$
if $K_1$ is chosen large depending on $n,m,\alpha, \alpha'$, and $\epsilon$, and $\delta$ and $\epsilon$ are small depending on $n,m,\alpha$. (This uses Lemma \ref{lem:subadditive}.)

\end{proof}

\subsection{Stopping time argument}\label{sec:stoppingtime}
Let $k$ be as in Lemma \ref{lem:initial}. Let $Q^0$ be a cube in $\Delta_k \cap \QQ_{\text{good}}$, which will be fixed for the next few subsections. We will view $Q^0$ as the top cube of a certain stopping time argument.

The fact that $Q^0\in \QQ_{\text{good}}$ means that
$$ \md(C_0Q^0) < \epsilon$$
and that there is an $n$-plane $P_{Q^0} \in \PP$ such that
$$ \|v\|_{Q^0} \geq \delta \text{ for all unit } v\in P_{Q^0}.$$

Let $S^0 = \{Q^0\}$. We inductively define collections $S^i \subseteq \QQ_{\text{good}}$, each of which consists of pairwise disjoint cubes, as follows. Assume we have defined
$$ S^{i-1} = \{ Q^{i-1}_1, \dots, Q^{i-1}_{k_{i-1}}\}.$$
A cube $Q$ will be placed in $S^i$ if it satisfies the following conditions:
\begin{enumerate}
\item\label{switch1} $Q\in \QQ_{\text{good}}.$
\item\label{switch2} $Q\subseteq Q^{i-1}_j$ for some cube $Q^{i-1}_j \in S^{i-1}$.
\item\label{switch3} there exists a unit vector $v\in P_{Q^{i-1}_j}$ with 
$$\|v\|_{Q} < \delta',$$
\item\label{switch4} $Q$ is a maximal sub-cube of $Q^{i-1}_j$ satisfying \eqref{switch1} and \eqref{switch3}.
\end{enumerate}

The above conditions define a disjoint family of cubes $S^i$ with the property that each cube of $S^i$ is in $\QQ_{\text{good}}$ and is contained in a cube of $S^{i-1}$.

One can view the above construction of the families $S^i$ in the following way: We begin with a starting cube $Q^0\in\QQ_{\text{good}}$ with an associated ``good plane'' $P_{Q^0}$, in the sense that \eqref{eq:PQdef} holds for $Q^0$ and $P_{Q^0}$. We proceed down each branch of the tree of descendants of $Q^0$. We stop at the first time we see a cube $Q^1\in \QQ_{\text{good}}$ for which $P_{Q^0}$ is a ``bad plane'' for $Q^1$, in the sense that there is a unit vector $v\in P_{Q^0}$ such that
$$ \|v\|_{Q^1} < \delta'.$$
In that case, we add $Q^1$ to $S^1$, find a new good plane $P_{Q^1}$ for $Q^1$, and continue the process. The largest good descendant of $Q^1$ for which $P_{Q^1}$ is no longer a good plane will be added to $S^2$, etc. 

Thus, cubes in $S:=\cup_{i=1}^\infty S^i$ are those good cubes for which the appropriate plane ``switches'' to another element of $\PP$, in a quantitative way.

\subsection{Packing condition}

Fix a $k$ as in Lemma \ref{lem:initial} and $Q^0\in \Delta_k \cap \QQ_{\text{good}}$.

Our next goal is to prove that most points of $Q^0$ cannot lie in too many nested cubes of $S$, i.e., that $S^i$ for large $i$ has small $\HH^{n+m}$-measure.

\begin{lemma}\label{lem:packed}
There is a choice of $K=K(n,m,\alpha',\delta,\delta',\epsilon) \in \mathbb{N}$ such that
$$ \HH^{n+m}\left(\cup_{Q\in S^K} Q \right) < \alpha'\HH^{n+m}(Q^0).$$
\end{lemma}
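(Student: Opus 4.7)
The plan is to establish a Carleson-type packing bound $\sum_{Q\in \mathcal{S}}|Q|\leq C|Q^0|$, where $\mathcal{S}=\bigcup_{i\geq 1}S^i$ and $C=C(n,m,\delta,\delta',\epsilon)$. Since $\bigcup S^i$ is nested-decreasing as a point set, writing $N(x)$ for the largest $i$ with $x\in\bigcup S^i$ gives $\int N = \sum_i|\bigcup S^i|\leq C|Q^0|$, and Chebyshev yields $|\bigcup S^K|=|\{N\geq K\}|\leq C|Q^0|/K<\alpha'|Q^0|$ for $K>C/\alpha'$, proving the lemma.

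The Carleson bound rests on a dichotomy at each stopping event. Fix $\delta'\leq\delta/2$. For $Q=Q^{(i)}\in S^i$ with parent $Q^{(i-1)}\in S^{i-1}$ and stopping vector $v\in P_{Q^{(i-1)}}$, I have $\|v\|_{Q^{(i-1)}}\geq\delta$ and $\|v\|_{Q}<\delta'$. A telescoping comparison of the seminorm approximations at adjacent dyadic scales $R_j\supsetneq R_{j+1}$, obtained by applying the two $\md$ estimates to a pair $x,y\in C_0R_{j+1}$ with $|x-y|\approx C_0\side(R_{j+1})$, shows that whenever two adjacent dyadic cubes both satisfy $\md(C_0\cdot)<\epsilon$, their seminorms on unit vectors differ by $O(\epsilon)$. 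Summing along the dyadic chain from $Q$ to $Q^{(i-1)}$ forces one of: \textbf{Case A} (some intermediate $R$ with $Q\subsetneq R\subsetneq Q^{(i-1)}$ lies in $\QQ_{\md}$), or \textbf{Case B} (the generation gap satisfies $\log_2(\side(Q^{(i-1)})/\side(Q))\geq N$, where $N:=c(n,m)(\delta-\delta')/\epsilon$).

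For Case A, charge each such $Q$ to a chosen intermediate witness $R(Q)\in\QQ_{\md}$. The key combinatorial observation is that any two distinct cubes $Q_1,Q_2\in\mathcal{S}$ with $R(Q_1)=R(Q_2)=R$ share a common parent in $\mathcal{S}$ (namely, the smallest element of $\mathcal{S}$ strictly containing $R$), so they are disjoint siblings inside $R$; hence $\sum_{Q:R(Q)=R}|Q|\leq|R|$. Combined with the rescaled quantitative differentiation bound $\sum_{R\subseteq Q^0, R\in\QQ_{\md}}|R|\leq C_\epsilon|Q^0|$ from Theorem \ref{thm:quantdiff}, this yields $\sum_{Q\in\mathcal{S},\text{ Case A}}|Q|\leq C_\epsilon|Q^0|$.

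The main obstacle is Case B. The intermediate cubes witnessing the forced shrinkage avoid $\QQ_{\md}$, so they lie in $\QQ_{\text{good}}\cup\QQ_{\text{compressed}}$: the difficulty is that Lemma \ref{lem:compressedcontent} controls the total \emph{mapping content} of $\QQ_{\text{compressed}}$, not the volume. My plan is to exploit two structural constraints on Case B. First, intermediate cubes in $\QQ_{\text{good}}$ that are not themselves stopping cubes must have $\|v\|_R\geq\delta'$ for every unit $v\in P_{Q^{(i-1)}}$, which rigidifies their seminorms. Second, applying a rescaled Theorem \ref{thm:quantdiff} inside each $Q^{(i-1)}$ together with a local version of Lemma \ref{lem:compressedcontent} should give a per-parent contraction $|\bigcup\{Q\in S^i:Q\subseteq Q^{(i-1)},\text{ Case B}\}|\leq(1-\lambda)|Q^{(i-1)}|$ for some $\lambda=\lambda(n,m,\delta,\delta',\epsilon)>0$. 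Iterating yields geometric decay on the Case B part, which combined with the Case A Carleson bound closes the argument by choosing $K$ large enough in terms of $n,m,\alpha',\delta,\delta',\epsilon$.
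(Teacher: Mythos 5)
Your Case A argument is essentially correct, and the reduction of the lemma to a Carleson-type packing estimate is a reasonable framework. However, the heart of the argument — what you call Case B — is precisely the step you do not prove, and your proposed route to it is both vague and, in my view, unlikely to close. You write that combining a rescaled Theorem \ref{thm:quantdiff} with a ``local version of Lemma \ref{lem:compressedcontent}'' \emph{should} give a per-parent contraction, but neither ingredient produces a volume bound on the stopping children: the first only controls cubes with large $\md$, and the second only controls \emph{mapping content} of compressed cubes, not their $(n+m)$-dimensional measure. Nothing in your plan rules out the scenario where every generation-$N$ descendant of $Q^{(i-1)}$ is a Case B stopping cube (i.e., the approximating seminorms rotate continuously but fast enough to realize the plane switch at every descendant), in which case $S^i$ covers $Q^{(i-1)}$ entirely and no contraction occurs.

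The missing idea is a slicing argument along the good plane of the parent cube, which is what the paper's Lemma \ref{lem:packedfubini} does and which applies uniformly to all stopping children (making your Case A/Case B dichotomy unnecessary). Fix the parent $Q=Q^{(i-1)}$ with good plane $P=P_Q$ and slice $Q$ by affine $n$-planes $P_y$ parallel to $P$. Because $\|v\|_Q\geq\delta$ for all unit $v\in P$, the image $f(P_y)$ has $n$-content bounded below by roughly $\delta^n$ (Claim \ref{claim:lowerbound}). On the other hand, each stopping child $R\in S^{i+1}$ has some unit $v\in P$ with $\|v\|_R<\delta'$, which means $f$ nearly collapses the $v$-direction inside $R$, so $\HH^n_\infty(f(P_y\cap R))\lesssim(\delta'+\epsilon)\side(R)^n$ (Claim \ref{claim:upperbound}). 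Subadditivity of $\HH^n_\infty$ on the slice then forces $\sum_{R}\side(R)^n\leq 1-\eta$ for the children meeting $P_y$, and integrating in $y$ via Fubini gives the per-parent contraction $\HH^{n+m}(\cup_{R\subseteq Q, R\in S^{i+1}} R)\leq(1-\eta)\HH^{n+m}(Q)$. Iterating this immediately yields the lemma. Your proposal correctly reduces the problem to such a contraction but does not supply the geometric mechanism that makes it true, which is the degeneracy of $f$ in the old plane's directions inside the stopping children.
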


The main step in the proof of Lemma \ref{lem:packed} is the following.

\begin{lemma}\label{lem:packedfubini}
Let $Q\in S^i$ for some $i\geq 1$. Then the collection of all $R\in S^{i+1}$ that are contained in $Q$ has
$$ \HH^{n+m}\left( \bigcup_{R\in S^{i+1}, R\subseteq Q} R \right) < (1-\eta)\HH^{n+m}(Q), $$
for some $\eta=\eta(\epsilon,\delta,n,m)>0$.
\end{lemma}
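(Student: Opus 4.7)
The plan is to slice $Q$ by the affine $n$-planes parallel to the good plane $P_Q$ and run a dimension-$n$ Hausdorff-content argument on each slice, then integrate. For $y \in \pi_{P_Q^{\perp}}(Q)$, write $L_y = Q \cap (y + P_Q)$, an $n$-cube of side $\side(Q)$. The two key facts are: (a) on each $L_y$ the restriction $f|_{L_y}$ is approximately bi-Lipschitz at the $\delta$ scale (because $\|v\|_Q \geq \delta$ for unit $v \in P_Q$ and $\md(C_0Q)<\epsilon$), while (b) for each child $R \subseteq Q$ meeting $L_y$ the image $f(R \cap L_y)$ is $\delta'$-compressed in its direction $v_R \in P_Q$, which is tangent to $L_y$. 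So each slice gives an $n$-dimensional picture in which the map is ``large'' globally but ``thin'' on each child, and this asymmetry is what will force a packing loss.

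Concretely, from $d(f(u),f(u')) \geq \delta|u-u'| - \epsilon\side(Q)$ for $u,u' \in L_y$, a packing argument using a maximal $(2\epsilon\side(Q)/\delta)$-net in $L_y$ (of size $\gtrsim (\delta/\epsilon)^n$) whose image is $\epsilon\side(Q)$-separated in $X$ gives $\HH^n_\infty(f(L_y)) \geq c_n\delta^n |L_y|_n$. For the upper bound on each child, combine $\|v_R\|_R < \delta'$ with $\md(C_0R)<\epsilon$ to see $f(R\cap L_y)$ lies in a $(\delta'+\epsilon)\side(R)$-neighborhood of $f(T)$, where $T$ is an $(n{-}1)$-dimensional transversal to $v_R$ inside $R \cap L_y$; covering $T$ by $\lesssim (1/\delta')^{n-1}$ balls of radius $\delta'\side(R)$ then yields
\[
\HH^n_\infty(f(R\cap L_y)) \leq C_n (\delta' + \epsilon)\side(R)^n.
\]

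Using subadditivity of $\HH^n_\infty$, the $1$-Lipschitz bound $\HH^n_\infty(f(L_y \setminus \cup R)) \leq |L_y \setminus \cup R|_n$ (since $L_y \subseteq \RR^n$), and the two estimates above,
\[
c_n\delta^n |L_y|_n \leq |L_y \setminus \cup R|_n + C_n(\delta'+\epsilon) \sum_{R:\, R\cap L_y \neq \emptyset}\side(R)^n.
\]
Integrating over $y \in \pi_{P_Q^{\perp}}(Q)$, using $\int \chi_{R\cap L_y \neq \emptyset}\,dy = |\pi_{P_Q^{\perp}}(R)|_m \leq \side(R)^m$ and the disjointness of the children ($\sum \side(R)^{n+m} = |\cup R|_{n+m}$), yields
\[
(1 - C_n(\delta'+\epsilon))\, \bigl|\textstyle\bigcup R\bigr|_{n+m} \leq (1 - c_n\delta^n)\,|Q|_{n+m}.
\]
Since the setup of Section \ref{sec:hbilip} allows $\delta'$ to be chosen small depending on $\delta$ and $\epsilon$ small depending on $\delta, \delta'$, one may ensure $C_n(\delta'+\epsilon) \leq c_n\delta^n/2$, giving $|\bigcup R|_{n+m} \leq (1-\eta)|Q|_{n+m}$ with $\eta \approx c_n\delta^n/4$, depending only on $\epsilon, \delta, n, m$.

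The main obstacle is carrying the additive $O(\epsilon\side(\cdot))$ errors coming from the $\md$-approximation through both content estimates cleanly --- particularly the lower bound on $\HH^n_\infty(f(L_y))$, which requires verifying that the scale at which the near-bi-Lipschitz condition holds ($\gtrsim \epsilon\side(Q)/\delta$) is still small enough, relative to $\side(Q)$, to produce a packing net of size $\gtrsim (\delta/\epsilon)^n$. Everything else is bookkeeping: the slicing, subadditivity, and the Fubini integration over $y$ are routine.
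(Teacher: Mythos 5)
Your overall architecture --- slice $Q$ by affine $n$-planes parallel to $P_Q$, lower-bound $\HH^n_\infty$ of the image of each slice, upper-bound $\HH^n_\infty$ of the image of each stopped child on the slice, and close the estimate with Fubini --- is exactly what the paper does, and your upper bound on the children, the measure bookkeeping, and the integration over $y$ are all fine.

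The gap is in the lower bound $\HH^n_\infty(f(L_y)) \gtrsim \delta^n\side(Q)^n$. Knowing that a maximal $r$-net in $L_y$ of cardinality $N \gtrsim (\delta/\epsilon)^n$ has image $s$-separated (with $s \approx \epsilon\side(Q)$) does \emph{not} give $\HH^n_\infty(f(L_y)) \gtrsim N s^n$: in a general metric space a pairwise $s$-separated set of arbitrary cardinality can be contained in a single ball of diameter barely larger than $s$, so its $n$-content can be as small as roughly $s^n$ regardless of the number of points (and in any case a finite set of points has $\HH^n_\infty$ equal to zero in $\RR^n$). Separation of images of a net, by itself, says nothing about the content of $f(L_y)$; one needs an input that ties the estimate to the topology of the $n$-cube, preventing $f$ from crushing $L_y$ onto a low-content set while still separating the net. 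This is precisely why the paper proves instead that $f$ separates every pair of opposite $(n-1)$-dimensional faces of $L_y$ by distance $\geq \delta - 2C_0\epsilon$ (using $\|v\|_Q \geq \delta$ for unit $v\in P_Q$ together with $\md(C_0Q)<\epsilon$) and then invokes the width-type inequality of \cite[Corollary~1.6]{Ki16} to conclude $\HH^n_\infty(f(L_y)) \geq (\delta - 2C_0\epsilon)^n$. Some such degree/co-area-theoretic ingredient is unavoidable here; a bare metric packing count cannot replace it. Once you substitute the face-separation plus \cite[Corollary~1.6]{Ki16} argument for the net argument, the rest of your proof goes through essentially as written.
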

\begin{proof}
Assume that $Q=Q_0$ for convenience; we can achieve this by simply rescaling. Let $P=P_Q$ as in \eqref{eq:PQdef}.

Let $P_y = (P+y) \cap Q$ for $y\in P^{\bot} \cap [0,1]^m$. Note that each $P_y$ is isometric to the unit cube of $\RR^n$. It will also be convenient to assume that no component of $y$ is of the form $\frac{k}{2^n}$ for any $k,n\in\mathbb{Z}$, so that $\HH^n$-a.e. point of $P_y$ is in at most one dyadic cube of any given scale.

We first give a lower bound on the size of the image of $P_y$.
\begin{claim}\label{claim:lowerbound}
We have
\begin{equation}\label{eq:lowerbound}
 \HH^n_\infty(f(P_y)) \geq (\delta - 2C_0\epsilon)^n. 
\end{equation}
\end{claim}
\begin{proof}[Proof of Claim \ref{claim:lowerbound}]
Observe that if $F$ and $F'$ are any pair of opposite $(n-1)$-dimensional faces of the unit $n$-cube $P_y$, 
then 
$$ \dist(f(F), f(F')) \geq \delta - 2C_0 \epsilon,$$
otherwise a vector $v\in P$ from a point of $F$ to a point of $F'$ would have $\|v\|_Q<\delta \leq \delta|v|$, contradicting the definition \eqref{eq:PQdef} of $P_Q$ for $Q\in S^i\subseteq\QQ_{\text{good}}$. Equation \eqref{eq:lowerbound} then follows from \cite[Corollary 1.6]{Ki16}.
\end{proof}

We next argue an upper bound on the size of the images of $P_y \cap R$, for sub-cubes $R\in S^{i+1}$ that are contained in $Q$.
\begin{claim}\label{claim:upperbound}
Let $R\in S^{i+1}$ be contained in $Q$. There is a constant $C_{n,m}$ depending only on $n$ and $m$ such that
\begin{equation}\label{eq:upperbound}
 \HH^n_\infty(f(P_y \cap R) )\leq C_{n,m}(\delta'+\epsilon)\side(R)^n.
\end{equation}
\end{claim}
\begin{proof}[Proof of Claim \ref{claim:upperbound}]
Since $R\in S^{i+1}$, there is a unit vector $v\in P$ such that
$$\|v\|_R < \delta'.$$

Let $P'$ be the affine $(n-1)$-plane inside $P_y$ that is orthogonal to $v$ and passes through the center of the $n$-dimensional cube $P_y \cap R$. Let $t=  C_0(\delta'\sqrt{n}+\epsilon)$.

We can cover $P'\cap C_0R$, and hence $f(P'\cap C_0R)$, by $\lesssim_{n,m} t^{-(n-1)}$ balls of radius $t\side(R)$. If $x\in P_y\cap R$, then the nearest point $x'$ to $x$ in $P'$ lies in $P'\cap C_0R$, and has $x-x'$ parallel to $v$. Therefore
$$ d(f(x), f(x')) < \|x-x'\|_Q + C_0\epsilon\side(R) < \delta'|x-x'| + C_0\epsilon\side(R)   \leq  t\side(R).$$
Thus, 
$$ f(P_y \cap R) \subseteq N_{ t\side(R)}(f(P' \cap C_0R)) \subseteq \text{ the union of }  \lesssim_{n,m} t^{-(n-1)} \text{ balls of radius } 2t\side(R).$$
Hence
$$ \HH^n_\infty(f(P_y \cap R) )\lesssim_{n,m} t^{-(n-1)} ( 2t\side(R))^n = 2^n t\side(R),$$
which proves \eqref{eq:upperbound}.
\end{proof}

We will need the following basic fact: For each $n\in\mathbb{N}$, there is a constant $\lambda_n>0$ such that, if $R$ is a cube in $\RR^n$, then
\begin{equation}\label{eq:content}
\HH^n_\infty(R) = \lambda_n \side(R)^n.
\end{equation}
Indeed, up to scaling, all such cubes are isometric, so it suffices to understand that the unit cube in $\RR^n$ has positive, finite Hausdorff $n$-content, which is standard.

\medskip

We now decompose $P_y$ as
$$ P_y = \bigcup_{R\in S^{i+1}, R\cap P_y\neq \emptyset} (R\cap P_y) \cup \bigcup_{R'\notin S^{i+1}, R\cap P_y\neq \emptyset} (R\cap P_y),$$
where $R'$ are a collection of almost-disjoint dyadic cubes chosen to cover $P_y\setminus  \cup_{R\in S^{i+1}} (R\cap P_y)$.

Therefore, using Claims \ref{claim:lowerbound} and \ref{claim:upperbound}, and \eqref{eq:content}, we obtain:
\begin{align*}
(\delta-2C_0\epsilon)^n &\leq \HH^n_\infty(f(P_y))\\
&\leq \sum_{R\in S^{i+1}, R\cap P_y\neq \emptyset} \HH^n_\infty(f(R\cap P_y)) + \sum_{R'\notin S^{i+1},  R\cap P_y\neq \emptyset} \HH^n_\infty(f(R'\cap P_y))\\
&\leq C_{n,m}(\delta'+\epsilon)\sum_{R\in S^{i+1}, R\cap P_y\neq\emptyset}\side(R)^n + \lambda_n \sum_{R'\notin S^{i+1},  R\cap P_y\neq \emptyset} \side(R')^n\\
&= C_{n,m}(\delta'+\epsilon)\sum_{R\in S^{i+1}, R\cap P_y\neq\emptyset}\side(R)^n + \lambda_n\left(1- \sum_{R\in S^{i+1}, R\cap P_y\neq\emptyset} \side(R)^n \right)\\
&= \lambda_n - (\lambda_n -  C_{n,m}(\delta'+\epsilon)) \sum_{R\in S^{i+1}, R\cap P_y\neq\emptyset}\side(R)^n
\end{align*}

Rearranging the above inequality yields
$$ \sum_{R\in S^i, R\cap P_y\neq\emptyset}\side(R)^n \leq \frac{\lambda_n - (\delta-2C_0\epsilon)^n}{\lambda_n - C_{n,m}(\delta'+\epsilon)}. $$
Since $\delta'$ and $\epsilon$ are chosen depending on $\delta,n,m$, we may force the fraction on the right hand side to be bounded strictly away from $1$, i.e., so that
\begin{equation}\label{eq:fubini}
\sum_{R\in S^{i+1}, R\cap P_y\neq\emptyset}\side(R)^n \leq 1-\eta, 
\end{equation}
for $\eta = \eta(n,m,\delta,\delta',\epsilon)>0$.

We now apply Fubini's theorem to estimate the $(n+m)$-dimensional volume of the set $\cup_{R\in S^i, R\subseteq Q} R$ by integrating \eqref{eq:fubini} over $y\in [0,1]^{m}$. (Note that the set of dyadic points $y$ that we excluded in proving \eqref{eq:fubini} is a set of $\HH^m$-measure zero.) Writing $|\cdot|$ for Lebesgue measure in $\RR^{n+m}$, we obtain
$$ \left| \cup_{R\in S^{i+1}, R\subseteq Q} R \right| \leq \int_{y\in [0,1]^m} \sum_{R\in S^{i+1}, R\cap P_y\neq \emptyset}\side(R)^n \leq 1-\eta = (1-\eta)|Q|.$$
Therefore
$$ \HH^{n+m}\left( \cup_{R\in S^{i+1}, R\subseteq Q} R \right) \leq (1-\eta)\HH^{n+m}(Q),$$
as desired.
\end{proof}

We now complete the proof of Lemma \ref{lem:packed}.

\begin{proof}[Proof of Lemma \ref{lem:packed}]
By Lemma \ref{lem:packedfubini}, we have

$$ \HH^{n+m}\left(\cup_{Q\in S^{i+1}} Q \right) \leq (1-\eta)\HH^{n+m}\left(\cup_{Q\in S^{i}} Q \right)$$
for each $i\geq 0$, where $\eta>0$ is the constant from Lemma \ref{lem:packedfubini}.

It follows that, choosing $K$ large enough so that $(1-\eta)^K<\alpha'$, we obtain
$$ \HH^{n+m}\left(\cup_{Q\in S^K} Q \right) < \alpha'\HH^{n+m}(Q^0).$$
The choice of $K$ depends therefore on $\alpha'$ and $\eta(n,m,\delta,\delta',\epsilon)$. This completes the proof.
\end{proof}

\subsection{Splitting and definition of the sets}
We continue to work with a fixed $Q^0\in \Delta_k \cap \QQ_{good}$ as in section \ref{sec:stoppingtime}, where $k$ comes from Lemma \ref{lem:initial}.

In section \ref{sec:stoppingtime}, we defined collections of cubes $S^0, S^1, S^2, \dots \subseteq \QQ_{\text{good}} \subseteq \Delta$. Each collection $S^i$ consists of pairwise disjoint dyadic cubes contained in $Q^0$, and each cube of $S^{i+1}$ is contained a cube of $S^i$.

In this subsection, we split each $S^i$ into a controlled number of disjoint sub-collections $S^i_j$, using Lemma \ref{lem:splitting}. We will then use the $S^i_j$ to define the sets required by Proposition \ref{prop:hbilip}.

For the first step, fix an odd integer $\Lambda>3$, which will depend on $\delta'$, $n$, and $m$, and let $\alpha'' = \frac{\alpha'}{K}$, which depends on $\alpha$ and the constant $K=K(n,m,\delta, \delta',\epsilon)$ from Lemma \ref{lem:packed}.

Apply Lemma \ref{lem:splitting} to each collection $S^i$, with the parameter $\eta$ set as $\alpha''$ and expansion constant $\Lambda$.

This partitions each $S^i$ into a controlled number of families $S^i_j$ ($1\leq j \leq k_i \leq k_0(\alpha'',n,m)$) that are $\Lambda$-disjoint, and a garbage set $G^i\subseteq S^i$ whose total $\HH^{n+m}$-measure is less than $\alpha''$.

Consider the collection of ``words'' $w$ taken from the set
\begin{equation}\label{eq:Wdef}
 W = \bigcup_{\ell=0}^{K} \prod_{i=1}^{\ell} \{1, 2, \dots, k_i\}. 
\end{equation}
In other words, a word $w\in W$ is of the form $(j_0, j_1, j_2, \dots, j_\ell)$, where $0\leq \ell \leq K$ and $1\leq j_i \leq k_i$ for each $i\in\{1,\dots, \ell\}$. (The number of words in $W$ is controlled based only on $K$ and $k_0$, hence independently of $Q^0$.)

Fix a word $w=(j_0, j_1, j_2, \dots, j_\ell)\in W$. This yields collections of cubes:
\begin{equation}\label{eq:T0w}
 T^0_w = \{Q^0\} 
\end{equation}
\begin{equation}\label{eq:Tiw}
 T^i_w = \{Q\in S^i_{j_i} :  Q\subseteq R \text{ for some } R \text{ in } T^{i-1}_w \}.
\end{equation}

These collections have the following properties for $i\leq \ell$:
\begin{itemize}
\item Each $T^i_w$ is a sub-collection of $S^i_{j_i} \subseteq S^i$.
\item If $Q,Q'\in T^i_w$, then $\Lambda Q \cap \Lambda Q' = \emptyset.$
\item If $i<\ell$ and $Q\in T^{i+1}_w$, then $Q$ is contained in a cube of $T^{i}_w$.
\item If $Q\in S^i \setminus G^i$, then $Q\in T^i_w$ for some $w\in W$.
\end{itemize}

Independently of the above constructions, we may now also apply Lemma \ref{lem:coding} with parameter $\eta=\alpha'$. This yields a decomposition
\begin{equation}\label{eq:codingapplication}
 Q_0 = A_1 \cup A_2 \cup \dots \cup A_{M_{\md}} \cup G_{\md},
\end{equation}
where $A_i$ have the property described in Lemma \ref{lem:coding}, the number $M_{\md}$ depends on $\alpha'$, $\epsilon$, and $n+m$, and $\HH^{n+m}_\infty(G_{\md})<\alpha'$.

We are now ready to define the sets appearing in Proposition \ref{prop:hbilip}.

\begin{definition}[Definition of the sets in Proposition \ref{prop:hbilip}]\label{def:F}
Let $k$ be as in Lemma \ref{lem:initial} and let $Q^0\in \QQ_{\text{good}} \cap \Delta_k$. Let $M_{\md}$ be as in Lemma \ref{lem:coding} and let $p\in \{1, \dots, M_{\md}\}$. Let $w=(j_0, \dots, j_\ell)\in W$ as defined in \eqref{eq:Wdef}.

We define a set $F(Q^0, p, w) \subseteq Q^0$ as follows:
$$ F(Q^0,p,w) =  A_p \cap  \left[\bigcap_{i=1}^\ell \left(\bigcup_{Q\in T^i_w} Q\right)   \setminus \left( \bigcup_{Q\in S^{\ell+1}} Q \cup \bigcup_{Q\in\QQ_{\text{compressed}}} (3Q)\right)\right].$$
Here $T^i_w$ refer to the cube collections defined in \eqref{eq:T0w} and \eqref{eq:Tiw}.
\end{definition}

Note that $F(Q^0,p,w)\subseteq Q^0$, and its dependence on $Q^0$ is implicit in our construction of the sets $S^i$ and $T^i_w$.

To clarify Definition \ref{def:F}, a point $x\in Q^0$ is in $F(Q^0, p, w)$ if and only if the following conditions hold:
\begin{enumerate}[(i)]
\item The point $x$ is in the set $A_p$ from our use of Lemma \ref{lem:coding} in \eqref{eq:codingapplication}.
\item For each $0\leq i \leq \ell$, $x$ is contained in a cube of the collection $T_w^i$, where $T_w^0 = \{Q_0\}$.
\item The point $x$ is not contained in any cube of $S^{\ell+1}$. (Hence, $x$ is contained in exactly $\ell+1$ cubes of $S$, those in (ii).)
\item The point $x$ is not contained in the triple of any cube of $\QQ_{\text{compressed}}$.
\end{enumerate}

These sets $F(Q^0,p,w)$, for each choice of ``top cube'' $Q^0$ provided by Lemma \ref{lem:initial}, each $p\in\{1,\dots, M_{\md}\}$, and each $w\in W$, will comprise the sets (called $F_i$) in Proposition \ref{prop:hbilip}. The following lemma reflects the fact that there are a controlled number of such sets, and that property (ii) of Proposition \ref{prop:hbilip} holds.

\begin{lemma}\label{lem:leftover}
Let $k$ be as in Lemma \ref{lem:initial}. As $Q^0$ ranges over all cubes in $\QQ_{good} \cap\Delta_k$, $p$ ranges from $1$ to $M_{\md}$, and $w\in W$ the number of sets $F(Q^0,p,w)$ is controlled by a constant depending only on $n,m,\alpha,$ and our previously defined constants $\delta,\delta',\epsilon, K, K_1, k_0$.

Furthermore, we have
\begin{equation}\label{eq:leftover}
 \HH^{n,m}_\infty\left(f, Q_0 \setminus \bigcup_{Q^0 \in\QQ_{\text{good}} \cap\Delta_k} \bigcup_{p=1}^{M_{\md}} \bigcup_{w\in W} F(Q^0,p,w) \right) < \alpha.
\end{equation}
\end{lemma}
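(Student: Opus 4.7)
The plan is to verify both claims by a bookkeeping argument. For the cardinality assertion, $Q^0$ ranges over $\Delta_k\cap\QQ_{\text{good}}$, which has at most $|\Delta_k|\le 2^{K_1(n+m)}$ elements by Lemma~\ref{lem:initial}; $p$ ranges over $\{1,\dots,M_{\md}\}$; and by definition \eqref{eq:Wdef}, a word $w\in W$ is a tuple of length at most $K+1$ with each entry in $\{1,\dots,k_0\}$, giving at most $(K+1)k_0^{K}$ possibilities. The product depends only on the already-introduced constants.

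For \eqref{eq:leftover}, I would decompose the complement of $\bigcup F(Q^0,p,w)$ inside $Q_0$ as the union of five exceptional sets:
\begin{align*}
E_1 &= Q_0\setminus\bigcup_{Q^0\in\Delta_k\cap\QQ_{\text{good}}} Q^0, \quad E_2 = G_{\md}, \quad E_3 = \bigcup_{Q\in\QQ_{\text{compressed}}} 3Q,\\
E_4 &= \bigcup_{Q^0}\bigcup_{Q\in S^K} Q, \quad E_5 = \bigcup_{Q^0}\bigcup_{i=0}^{K-1}\bigcup_{Q\in G^i} Q,
\end{align*}
where $S^i$ and $G^i$ denote the collections built from each $Q^0$ in Section~\ref{sec:stoppingtime} and the splitting step of Section~\ref{sec:hbilip}. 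The key observation is that if $x\in Q_0$ avoids all five, then $x$ lies in some top cube $Q^0\in\Delta_k\cap\QQ_{\text{good}}$, in some $A_p$, in no tripled compressed cube, and in some cube of $S^\ell$ but in no cube of $S^{\ell+1}$ with $\ell\le K-1$; moreover, for every $0\le i\le \ell$ the unique $S^i$-cube containing $x$ belongs to one of the good subfamilies $S^i_{j_i}$ rather than to $G^i$. Taking $w=(j_0,\dots,j_\ell)$, the containments $x\in \bigcap_{i=0}^\ell \bigcup_{Q\in T^i_w} Q$, $x\notin \bigcup_{Q\in S^{\ell+1}} Q$, $x\notin\bigcup_{Q\in\QQ_{\text{compressed}}}(3Q)$, and $x\in A_p$ match Definition~\ref{def:F} exactly, so $x\in F(Q^0,p,w)$.

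By Lemma~\ref{lem:subadditive}, it now suffices to bound each $\HH^{n,m}_\infty(f,E_i)$. Lemma~\ref{lem:initial} gives $\HH^{n,m}_\infty(f,E_1)<\alpha'$. Lemmas~\ref{lem:coding} and~\ref{lem:areaboundscontent} give $\HH^{n,m}_\infty(f,E_2)\lesssim \HH^{n+m}_\infty(G_{\md})<\alpha'$. Lemma~\ref{lem:compressedcontent} gives $\HH^{n,m}_\infty(f,E_3)\lesssim \delta+\epsilon$. Summing Lemma~\ref{lem:packed} over the disjoint top cubes yields $\HH^{n+m}(E_4) < \alpha'\sum_{Q^0}\HH^{n+m}(Q^0)\le \alpha'$, hence $\HH^{n,m}_\infty(f,E_4)\lesssim \alpha'$ by Lemma~\ref{lem:areaboundscontent}. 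Finally, for each $Q^0$ and each $0\le i\le K-1$, Lemma~\ref{lem:splitting} gives $\HH^{n+m}(\bigcup_{Q\in G^i} Q)<\alpha''$, whence $\HH^{n+m}(E_5)\le K\cdot|\Delta_k\cap\QQ_{\text{good}}|\cdot\alpha''\le K\cdot 2^{K_1(n+m)}\alpha''$, and again $\HH^{n,m}_\infty(f,E_5)\lesssim \HH^{n+m}(E_5)$. Adding the five estimates and using $\alpha'=\alpha/(10C)$ from~\eqref{eq:alpha'def}, together with the standing smallness of $\delta$ and $\epsilon$ relative to $\alpha$, the total is below $\alpha$ as soon as $\alpha''$ is chosen small enough.

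The one thing to watch is the choice of $\alpha''$: to absorb the sum over the (bounded but possibly large) number of top cubes in the estimate for $\HH^{n+m}(E_5)$, one must take $\alpha''$ not merely as $\alpha'/K$ but of order $\alpha'/(K\cdot 2^{K_1(n+m)})$. Since $K$, $K_1$, $n$, and $m$ have all been fixed earlier in terms of $\alpha,\delta,\delta',\epsilon,n,m$, this value of $\alpha''$ depends only on the previously introduced parameters, so the resulting $k_0=k_0(\alpha'',\Lambda,n+m)$ from Lemma~\ref{lem:splitting} is still a legitimate constant, and the counting and the content estimates are simultaneously satisfied.
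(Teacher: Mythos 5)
Your proposal is correct and follows essentially the same route as the paper's proof: the same five--way splitting of the complement, the same lemmas to bound each piece, and the same counting argument for the number of sets $F(Q^0,p,w)$.

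The one place you diverge is the bound on the garbage sets $G^i$ from the splitting step, and your concern there is legitimate. The paper simply asserts this set has total $\HH^{n+m}$--measure less than $K\alpha''=\alpha'$, without visibly addressing the sum over the (possibly many) top cubes $Q^0\in\Delta_k\cap\QQ_{\text{good}}$; taken at face value, the absolute bound in Lemma~\ref{lem:splitting} gives only $\alpha''$ per top cube per level, and the naive sum is $K\cdot 2^{(n+m)K_1}\cdot\alpha''$, exactly as you compute. Your fix of shrinking $\alpha''$ by the additional factor $2^{-(n+m)K_1}$ is valid, since $K_1$, $K$ and $\Lambda$ are all fixed before $\alpha''$ is chosen, so the resulting $k_0=k_0(\alpha'',\Lambda,n+m)$ is still a bona fide constant. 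The paper's choice $\alpha''=\alpha'/K$ can also be justified, but only by noticing that the proof of Lemma~\ref{lem:splitting} actually yields the stronger \emph{relative} bound $(1-c)^{k_0}\,\bigl|\bigcup_{Q\in\cQ}Q\bigr|$, so that applied to $S^i\subseteq Q^0$ the garbage has measure less than $\alpha''\,|Q^0|$; since the $Q^0$ are disjoint, the sum over $Q^0$ then stays below $\alpha''$ and the sum over $i$ gives $K\alpha''=\alpha'$. Either route works, and the rest of your decomposition, the argument that a point avoiding $E_1,\dots,E_5$ falls into some $F(Q^0,p,w)$, and the cardinality count are all fine.
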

\begin{proof}
The number of choices of $Q^0 \in \QQ_{good} \cap\Delta_k$ is controlled by $|\Delta_k|=2^{(n+m)k} \leq 2^{(n+m)K_1}$, where $K_1$ is the constant from Lemma \ref{lem:initial}. The number of choices for $p\in \{1,\dots,M_{\md}\}$ is controlled by $M_{\md}$, which depends only on $\epsilon$ and $n+m$. Lastly, the number of choices for $w\in W$ is bounded by a constant depending on the constants $K$ from Lemma \ref{lem:packed} and the constant $k_0$ arising from Lemma \ref{lem:splitting} with parameters depending on $\alpha',\delta, n,m$. 

It follows that the number of sets $F(Q^0,p,w)$ is controlled as desired.

We now focus on bounding the mapping content of the remaining points that are not in any set $F(Q^0,p,w)$. This is just a matter of assembling some prior results.

If a set $x\in Q_0$ is not in $\bigcup_{Q^0 \in \QQ_{\text{good}}  \cap\Delta_k} \bigcup_{F_w\subseteq Q^0} F_w$, then there are a few options:
\begin{enumerate}
\item\label{eq:bad1} $x$ may not be in any of the sets $A_1, \dots A_{M_{\md}}$ given by Lemma \ref{lem:coding} with parameter $\eta=\alpha'$.
\item\label{eq:bad2} $x$ may be in a cube $Q$ of $\Delta_k \setminus \QQ_{\text{good}} $,
\item\label{eq:bad3} $x$ may be in one of the garbage sets $G^i$, for $i\in \{1\dots K\}$,
\item\label{eq:bad4} $x$ may be in a cube of $S^{K+1}$, where $K$ is the constant from Lemma \ref{lem:packed}.
\item\label{eq:bad5} $x$ may be in a triple of a cube of $\QQ_{\text{compressed}}$.
\end{enumerate}

The set of points satisfying \eqref{eq:bad1} has total $\HH^{n+m}_\infty$ less than $\alpha'$, by Lemma \ref{lem:coding}.

The set of points satisfying \eqref{eq:bad2} has total $\HH^{n,m}_\infty(f,\cdot)$ less than $\alpha'$, by Lemma \ref{lem:initial}.

The set of points satisfying \eqref{eq:bad3} has total $\HH^{n+m}$-measure less than $K\alpha'' = \alpha'$, since there are at most $K$ sets $G^i$ and each has $\HH^{n+m}$-measure less than $\alpha''$ by our use of Lemma \ref{lem:splitting}.

The set of points satisfying \eqref{eq:bad4} has total $\HH^{n+m}$-measure less than $\alpha'$, by Lemma \ref{lem:packed}.

Lastly, the set of points satisfying \eqref{eq:bad5} has total  $\HH^{n,m}_\infty(f,\cdot)$  less than $C(\epsilon+\delta)$, by Lemma \ref{lem:compressedcontent}, where $C$ depends on $n$ and $m$.

Thus by ensuring that $\delta$ and $\epsilon$ are small depending only on $\alpha$, $n$, and $m$, recalling the definition of $\alpha'$ from \eqref{eq:alpha'def}, and using Lemmas \ref{lem:subadditive} and  \ref{lem:areaboundscontent}, we obtain \eqref{eq:leftover}.

\end{proof}

\subsection{Construction of the bi-Lipschitz mappings}

For the remainder of this section, we fix $k$ as in Lemma \ref{lem:initial}, $Q^0\in \QQ_{good} \cap\Delta_k$, $p\in\{ 1, \dots, M_{\md}\}$, and $w=(j_0, \dots, j_\ell)\in W$ as in \eqref{eq:Wdef}. 

We then obtain a set $F=F(Q^0,p,w)$ as in Definition \ref{def:F}, which will be fixed for the remainder of the section. We will define a bi-Lipschitz mapping $\phi: F \rightarrow \phi(F)\subseteq Q_0$. The collection of such mappings $\phi$ for all choices of $F$ will comprise the mappings called $\phi_i$ in Proposition \ref{prop:hbilip}.

Recall the cube collections $T^i_w$ defined in \eqref{eq:T0w} and \eqref{eq:Tiw}. By definition of $F$, we have
$$ F \subseteq \cap_{i=0}^{\ell} \left(\cup_{Q\in T^i_w} Q \right). $$

As noted below \eqref{eq:Tiw}, the collections $T^0_w, \dots, T^\ell_w$ satisfy:
\begin{itemize}
\item Each $T^i_w$ is a sub-collection of $S^i_{j_i} \subseteq S^i$.
\item If $Q,Q'\in T^i_w$, then $\Lambda Q \cap \Lambda Q' = \emptyset.$
\item If $Q\in T^{i+1}_w$, then $Q$ is contained in a cube of $T^{i}_w$.
\end{itemize}

Recall that each cube $Q$ in any $T^i_w$ is also, by definition, an element of $\QQ_{\text{good}}$. Thus, there is a seminorm $\|\cdot\|_Q$ satisfying
$$ \sup_{x,y\in C_0 Q} | d(f(x), f(y)) - \|x-y\|_Q | \leq \md(C_0Q)\side(C_0 Q) < C_0\epsilon\side(Q).$$
Moreover, as noted in \eqref{eq:PQdef}, there is a coordinate $n$-plane $P_Q\in\PP$ such that
$$ \|v\|_{Q} \geq \delta \text{ for all unit vectors } v\in P_Q.$$

For each $1\leq i \leq \ell$, let $\phi^i: \left(\cup_{Q\in T^i_w} Q\right) \rightarrow Q_0$ be a map defined as follows: On each cube $Q \in T^i_w$, the restriction $\phi^i|_{Q}$ of $\phi^i$ to $Q$ is an affine map $A_{Q}x+b_{Q}$ such that $A_{Q}$ is linear and orthogonal,
$$ \phi^i|_{Q}({Q}) = Q$$ 
and
$$A_{Q}(P_{Q}) = P_{Q'},$$
where $Q'$ is the unique cube of $T^{i-1}_w$ containing $Q$. Note that $P_Q$ and $P_{Q'}$ are \textit{coordinate} $n$-planes (i.e., elements of $\PP$), so such affine mappings exist.

Similarly, we define $\phi^0:Q^0 \rightarrow Q_0$ to be an affine map $Ax+b$, with $A$ orthogonal, such that
$$ \phi^0(Q^0)=Q^0$$
and
$$ A(P_{Q^0}) = P_0.$$
Recall that $P_0$ denotes the span of the first $n$ standard basis vectors in $\RR^{n+m}$.

Thus, each $\phi^i$ ``rotates in place'' the cubes $Q\in T^i_w$. The fact that $\Lambda>3$ and $\Lambda Q \cap \Lambda Q' = \emptyset$ for each $Q,Q'\in T^i_w$ implies that $\phi^i$ is well-defined and bi-Lipschitz on the set $\cup_{Q\in T^i_w} Q \supseteq F$, with an absolute bi-Lipschitz constant. Moreover, $\phi^i(\cup_{Q\in T^i_w} Q) = \cup_{Q\in T^i_w} Q$, since $\phi^i$ fixes each cube of $T^i_w$ setwise.

Therefore we can define $\phi : F \rightarrow Q_0 $ by
\begin{equation}\label{eq:phidef}
\phi = \phi^0 \circ \phi^1 \circ \phi^2 \circ \dots \circ \phi^\ell.
\end{equation}

Since each $\phi^i$ is bi-Lipschitz on $F$ with absolute constant, the map $\phi$ is bi-Lipschitz on $F$ with constant depending only on $K$, hence only on $\alpha,n,m,\delta,\delta',\epsilon$.

Informally, $\phi$ acts on $F$ as a sort of ``clockwork mechanism''. It rotates small scale cubes, then larger scale cubes, etc. with the goal of ``lining up'' the planes $P_Q$ to match $P_0$. We will see this in the next section.

\subsection{Conclusion of the proof of Proposition \ref{prop:hbilip}}
We continue to fix the set $F=F(Q^0,p,w)$ and the map $\phi$ defined in \eqref{eq:phidef}.

To complete the proof of Proposition \ref{prop:hbilip}, we will show that the map
$$ \tilde{h} = (f\circ \phi^{-1}, \pi_{P_0^{\bot}})$$
is (quantitatively) bi-Lipschitz on $\phi(F)$. Since all the component functions are separately Lipschitz, with bounds depending only on our chosen constants, it is immediate that $\tilde{h}$ is itself Lipschitz, with constant depending only on $\alpha$, $n$, $m$ (and our previously chosen constants, which ultimately will only depend on these). The remaining work will be to show the lower bound.

Let $x$ and $y$ be distinct points of $F$, so $\phi(x), \phi(y) \in \phi(F)$. Let $Q$ be a cube of minimal side length such that $3 Q$ contains both $x$ and $y$. Since $x,y\in F\subseteq Q^0$, we may take $Q\subseteq Q^0$. Observe that
\begin{equation}\label{eq:C'def}
\frac{\diam(3Q)}{|x-y|} \leq \frac{\diam(C_0 Q)}{|x-y|} \leq C'_{n,m}
\end{equation}
where $C'_{n,m}$ is a dimensional constant depending only on $n$ and $m$.

Note also that by definition of $F$, we must have that
$$ Q\in \QQ_{\text{good}}.$$
Indeed, no points of $F$ can be in a triple of a cube in $\QQ_{\text{compressed}}$, and the fact that $x,y\in F \subseteq A_p$ guarantees that $Q\notin \QQ_{\md}$ by Lemma \ref{lem:coding}. Thus, $Q\in \QQ_{\text{good}}$.

Let $i_0\in\{0, 1, \dots, \ell\}$ be the largest index such that $Q\subseteq Q_{i_0}$ for some $Q_{i_0}\in T_w^{i_0}$.

We begin with the following:

\begin{lemma}\label{lem:projectionbound}
With $x$, $y$, $i_0$, and $Q_{i_0}$ as above, we have
$$|\pi_{P_0^{\bot}}(\phi(x)) - \pi_{P_0^\bot}(\phi(y))| \geq |\pi_{P_{Q_{i_0}}^{\bot}}(x) - \pi_{P_{Q_{i_0}}^{\bot}}(y)| - \frac{2C'_{n,m}}{\Lambda}|x-y|. $$ 
Here $C'_{n,m}$ is the dimensional constant defined in \eqref{eq:C'def}.
\end{lemma}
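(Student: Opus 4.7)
The plan is to follow the trajectories $x_\ell := x$, $x_{i-1} := \phi^i(x_i)$ (so $x_{-1} = \phi(x)$), and the analogously defined $y_i$'s. Since $x,y\in F$, each $x_i$ lies in a unique cube $Q^x_i\in T^i_w$, and $\phi^i$ restricted to $Q^x_i$ is the orthogonal affine map $A_{Q^x_i}(\cdot)+b_{Q^x_i}$. By definition of $i_0$, for every $i\le i_0$ the cubes coincide, $Q^x_i=Q^y_i=:Q_i$, so at these levels $\phi^i$ applies the same affine map to both points and the differences telescope:
\[
\phi(x)-\phi(y)=A_{Q^0}A_{Q_1}\cdots A_{Q_{i_0}}(x_{i_0}-y_{i_0})=:A(x_{i_0}-y_{i_0}),
\]
where $A$ is an orthogonal linear map. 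Since $A_{Q_i}(P_{Q_i})=P_{Q_{i-1}}$ for $1\le i\le i_0$ and $A_{Q^0}(P_{Q^0})=P_0$, we get $A(P_{Q_{i_0}})=P_0$ and, by orthogonality, $A(P_{Q_{i_0}}^{\bot})=P_0^{\bot}$.

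Because $A$ is orthogonal and maps $P_{Q_{i_0}}^{\bot}$ onto $P_0^{\bot}$, it commutes with the relevant projections, $\pi_{P_0^{\bot}}\circ A=A\circ \pi_{P_{Q_{i_0}}^{\bot}}$, and preserves norms. Hence
\[
|\pi_{P_0^{\bot}}(\phi(x))-\pi_{P_0^{\bot}}(\phi(y))|=|\pi_{P_{Q_{i_0}}^{\bot}}(x_{i_0})-\pi_{P_{Q_{i_0}}^{\bot}}(y_{i_0})|.
\]
The lemma therefore reduces to controlling $|x_{i_0}-x|+|y_{i_0}-y|$ by a suitable multiple of $|x-y|/\Lambda$ and applying a triangle inequality on the right-hand side of this identity.

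For each $j>i_0$, $\phi^j$ fixes $Q^x_j$ setwise, so $|x_{j-1}-x_j|\le\diam(Q^x_j)\le\sqrt{n+m}\,\side(Q^x_j)$. The crucial observation is that $\side(Q^x_j)\le\side(Q^x_{j-1})/2$: if instead $Q^x_j$ equaled its parent, the switching condition \eqref{switch3} would produce a unit vector $v\in P_{Q^x_{j-1}}$ with $\|v\|_{Q^x_j}<\delta'$, while the good-plane inequality \eqref{eq:PQdef} forces $\|v\|_{Q^x_{j-1}}\ge\delta>\delta'$, a contradiction. Summing the resulting geometric series yields $|x_{i_0}-x|\le 2\sqrt{n+m}\,\side(Q^x_{i_0+1})$. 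By maximality of $i_0$, the cubes $Q^x_{i_0+1}$ and $Q^y_{i_0+1}$ are distinct members of the $\Lambda$-separated family $T^{i_0+1}_w$, so $\Lambda Q^x_{i_0+1}\cap\Lambda Q^y_{i_0+1}=\emptyset$; an elementary $\ell^\infty$ computation then gives $|x-y|\ge\tfrac{\Lambda-1}{2}\max(\side(Q^x_{i_0+1}),\side(Q^y_{i_0+1}))$. Applying the same estimate to $y$ and combining, one obtains $|x_{i_0}-x|+|y_{i_0}-y|\lesssim_{n,m}|x-y|/\Lambda$, which combined with the projection identity and the triangle inequality gives the stated bound, with $C'_{n,m}$ chosen large enough to absorb the dimensional constants (compatibly with its role in \eqref{eq:C'def}). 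The main conceptual subtlety I expect is the strict descent of side lengths past level $i_0$, which is what converts the accumulated error into a summable geometric series and is precisely the reason the parameter $\delta'$ was chosen strictly smaller than $\delta$.
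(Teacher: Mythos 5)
Your proof is correct and follows essentially the same strategy as the paper's. The paper factors $\phi = \psi \circ \psi'$ with $\psi = \phi^0\circ\cdots\circ\phi^{i_0}$ and uses that $\psi|_{Q_{i_0}}$ is an orthogonal affine map carrying $P_{Q_{i_0}}^\bot$ to (a translate of) $P_0^\bot$, whereas you track the trajectories $x_i$ and telescope; these are the same identity, written from the two ends. The only genuine presentational divergence is in bounding the error $|x_{i_0}-x|$: the paper observes directly that both $x$ and $\psi'(x)$ lie in $Q^x_{i_0+1}$ (each $\phi^j$ with $j>i_0$ fixes the relevant nested cube setwise), so $|x_{i_0}-x|\le\diam(Q^x_{i_0+1})$ in one step, whereas you derive strict descent $\side(Q^x_j)\le\side(Q^x_{j-1})/2$ and sum a geometric series. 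Your descent observation is correct (it does follow from $\delta'<\delta$ together with conditions \eqref{switch3}, \eqref{eq:PQdef}), but it is not needed here, and your remark that it is ``precisely the reason $\delta'$ was chosen smaller than $\delta$'' misattributes the role of that choice (which is mainly forced by the packing Lemma \ref{lem:packedfubini}). Likewise, you convert $\Lambda$-separation into $|x-y|\ge\frac{\Lambda-1}{2}\max\side(Q^{\cdot}_{i_0+1})$ via an $\ell^\infty$ computation, while the paper instead bounds $\diam(\Lambda Q^x_{i_0+1})\le\diam(3Q)\le C'_{n,m}|x-y|$; both routes give the required $O(|x-y|/\Lambda)$. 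One small gap you share with the paper: the claim $Q^x_i=Q^y_i$ for $i\le i_0$ is not literally ``by definition of $i_0$'' but requires the $\Lambda$-separation of $T^{i_0}_w$ with $\Lambda>3$ (otherwise $x\in 3Q\subseteq 3Q_{i_0}\subseteq\Lambda Q_{i_0}$ together with $x\in Q^x_{i_0}\subseteq\Lambda Q^x_{i_0}$ would contradict disjointness); since the same step is glossed over in the paper's proof, this is not a new defect. Overall, your argument is sound and equivalent to the paper's.
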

\begin{proof}
Note that the map
$$ \psi = \phi^0 \circ \phi^1 \circ \phi^2 \circ \dots \circ \phi^{i_0}, $$
is simply an affine map of full rank when restricted to $Q_{i_0}$.

By definition of $\phi^0$, $\phi^1$, $\phi^2$, etc., we have
$$ (\psi|_{Q_{i_0}})(P_{Q_{i_0}}) \text{ is parallel to } P_0$$
and therefore
$$ (\psi|_{Q_{i_0}})(P_{Q_{i_0}}^{\bot}) \text{ is parallel to } P_0^{\bot}. $$

By standard linear algebra, there is therefore an isometry $\iota\colon P_{Q_{i_0}}^\bot \rightarrow P_0^{\bot}$ such that
$$ \pi_{P_0^{\bot}} \circ (\psi|_{Q_{i_0}}) = \iota \circ \pi_{P_{Q_{i_0}}^{\bot}}.$$

We now consider two cases: $i_0=\ell$ or $i_0 < \ell$.

If $i_0=\ell$, then $\psi=\phi$ and it follows that
\begin{align*}
|\pi_{P_0^{\bot}}(\phi(x)) - \pi_{P_0^\bot}(\phi(y))| &= | \iota \circ \pi_{P^\bot_{Q_{i_0}}}(x) -  \iota \circ \pi_{P^\bot_{Q_{i_0}}} (y)|\\
&= | \pi_{P^\bot_{Q_{i_0}}}(x) - \pi_{P^\bot_{Q_{i_0}}} (y)|,
\end{align*}
which proves the lemma in this case.

If $i_0<\ell$, then let $\psi' = \phi^{i_0+1} \circ \dots \circ \phi^{\ell}$, so that $\phi = \psi \circ \psi'$. Then
\begin{align}
|\pi_{P_0^{\bot}}(\phi(x)) - \pi_{P_0^\bot}(\phi(y))| &=  | \iota \circ \pi_{P_{Q_{i_0}}^{\bot}} \circ \psi'(x) -  \iota \circ \pi_{P_{Q_{i_0}}^{\bot}} \circ \psi'(y)|\\
&=  |  \pi_{P^\bot_{Q_{i_0}}} \circ \psi'(x) -   \pi_{P^\bot_{Q_{i_0}}} \circ \psi'(y)| \label{eq:pi1}
\end{align}
Now, $x$ and $y$ are in different cubes $Q^x_{i_0+1}$ and $Q^y_{i_0+1}$ of $T^{i_0+1}_w$ (by maximality of $i_0$). Since $\psi'$ only rotates cubes at levels $i_0+1$, $i_0+2$, etc., we see that
$$\psi'(x)\in Q^x_{i_0+1} \text{ and } \psi'(y)\in Q^y_{i_0+1}.$$
Moreover, $Q^x_{i_0+1}$ and $Q^y_{i_0+1}$ are contained in $3Q$, and $\Lambda Q^x_{i_0+1}$ and $\Lambda Q^y_{i_0+1}$ are disjoint. Therefore,
$$ |\psi'(x) - x| \leq \diam(Q^x_{i_0+1}) \leq \frac{1}{\Lambda}\diam(\Lambda Q^x_{i_0+1}) \leq \frac{1}{\Lambda} \diam(3Q) \leq \frac{C'_{n,m}}{\Lambda}|x-y|,$$ 
with $C'_{n,m}$ as in \eqref{eq:C'def}. The analogous statement holds for $|\psi'(y)-y|$.

Therefore, using \eqref{eq:pi1},
\begin{align*}
|\pi_{P_0^{\bot}}(\phi(x)) - \pi_{P_0^\bot}(\phi(y))|  &= |  \pi_{P_{Q_{i_0}}^{\bot}} \circ \psi'(x) -   \pi_{P_{Q_{i_0}}^{\bot}} \circ \psi'(y)|\\
&\geq |  \pi_{P_{Q_{i_0}}^{\bot}} (x) -   \pi_{P_{Q_{i_0}}^{\bot}} (y)| - |\psi'(x) - x| - |\psi'(y)-y|\\
&\geq |  \pi_{P_{Q_{i_0}}^{\bot}} (x) -   \pi_{P_{Q_{i_0}}^{\bot}} (y)| - \frac{2C'_{n,m}}{\Lambda} |x-y|.
\end{align*}

This proves Lemma \ref{lem:projectionbound}.
\end{proof}

As $Q\in\QQ_{\text{good}}$, it has a good approximating seminorm for $f$, namely $\|\cdot\|_Q$. By the stopping time construction of the sets $S^i$, we must have that
$$\|v\|_Q \geq \delta'|v|$$
for all $v\in P_{Q_{i_0}}$. (If this did not hold, then $Q$ would have been added to $S^{i_0+1}$, violating the maximality of $i_0$.)

Write $w=x-y=w_1+w_2$, where $w_1\in P_{Q_{i_0}}$ and $w_2\in P_{Q_{i_0}}^\bot$. Then, using Lemma \ref{lem:normbound},
$$ \|w\|_Q + |w_2| \geq \|w_1\|_Q - \|w_2\|_Q + |w_2| \geq \delta'|w_1| - C_0\epsilon\side(Q) \geq \delta'|w_1| - C'_{n,m}\epsilon|w|$$
and, trivially,
$$ \|w\|_Q + |w_2| \geq |w_2|, $$
from which it follows that
\begin{equation}\label{eq:wnormbound}
 \|w\|_Q + |w_2| \gtrsim_{n,m} \delta'|w| - C'_{n,m}\epsilon|w|,
\end{equation}
with the implied constant depending only on $n$ and $m$. 

Using Lemma \ref{lem:projectionbound} and \eqref{eq:wnormbound}, we obtain
\begin{align*}
d(\tilde{h}(\phi(x)) , \tilde{h}(\phi(y))) &\gtrsim d(f(x) , f(y)) + |\pi_{P_0}(\phi(x)) - \pi_{P_0}(\phi(y)))|\\
&\geq \|x-y\|_Q - C_0\epsilon\side(Q)  + |\pi_{P_{Q_{i_0}}^{\bot}}(x) - \pi_{P_{Q_{i_0}}^{\bot}}(y)| - \frac{2C'_{n,m}}{\Lambda}|x-y|\\
&\geq \|w\|_Q - C_0 C'_{n,m}\epsilon|w|  + |w_2| -  \frac{2C'_{n,m}}{\Lambda}|w|\\
&\gtrsim_{n,m} \delta'|w| - \left(C_0 C'_{n,m}\epsilon + \frac{2C_{n,m}}{\Lambda}\right)|w|\\
&\gtrsim |x-y|\\
&\gtrsim |\phi(x) - \phi(y)|
\end{align*}
The penultimate line follows by taking $\epsilon>0$ sufficiently small and $\Lambda>3$ sufficiently large, depending on $n,m,\delta'$. The final line follows from the fact that $\phi$ is bi-Lipschitz.

This verifies property (i) of Proposition \ref{prop:hbilip}, and so along with Lemma \ref{lem:leftover} completes the proof of this proposition.

\section{Proof of Theorem \ref{thm:hardersarder}}\label{sec:mainproof}

We now combine Propositions \ref{prop:directional} and \ref{prop:hbilip} to prove Theorem \ref{thm:hardersarder}.

\begin{proof}[Proof of Theorem \ref{thm:hardersarder}]
Let $f\colon Q_0\rightarrow X$ satisfy the assumptions of Theorem \ref{thm:hardersarder}. Fix $\gamma>0$. 

Without loss of generality, we may assume that
$$\HH^n(f(Q_0)) \geq \HH^n_\infty(f(Q_0)) >0,$$
otherwise the conclusion of the theorem is trivial by Lemma \ref{lem:easymapping}.

By Proposition \ref{prop:hbilip}, we may decompose $Q_0$ as
$$ Q_0 = F_1 \cup F_2 \cup \dots \cup F_N \cup G, $$
where
\begin{equation}\label{eq:hbilipgarbage}
 \HH^{n,m}_\infty(f,G) < \gamma/2 
\end{equation}
and, for each $i\in\{1,\dots,N\}$, there is a bi-Lipschitz map $\phi_i\colon F_i\rightarrow Q_0$
such that the map
\begin{equation}\label{eq:htilde}
 (f\circ \phi_i^{-1}, \pi_{P_0^{\bot}}),
\end{equation}
i.e., the map
$$ (x,y) \mapsto (f(\phi_i^{-1}(x,y)), y),$$
is bi-lipschitz on $\phi_i(F_i)$.

Apply Corollary \ref{cor:bilipextension} to each map $\phi_i$ on $F_i$ with parameter $\kappa = \gamma/(10C_{n,m} N)$, where $C_{n,m}$ is the implied constant from Lemma \ref{lem:areaboundscontent}. This decomposes each set $F_i$ into a controlled number of sets 
$$ F_i = F_{i,1} \cup \dots \cup F_{i,M_i} \cup G_i$$
such that each map $\phi_i|_{F_{i,j}}$ admits a globally bi-Lipschitz extension 
$$ \phi_{i,j} : \RR^{n+m} \rightarrow \RR^{n+m}$$
and
\begin{equation}\label{eq:phigarbage}
\HH^{n,m}_\infty(f, G_i) \leq C_{n,m}\HH^{n+m}(G_i) < \frac{\gamma}{10N} \text{ for each } i\in\{1,\dots,N\},
\end{equation}
using Lemma \ref{lem:areaboundscontent}.

Note that the bi-Lipschitz constants of $\phi_{i,j}$ and the constants $M_i$ are controlled depending only on $\gamma$, $n$, and $m$. By passing to the closure, we may assume without loss of generality that the sets $F_{i,j}$ are each compact.

Write $\tilde{f}_{i,j} = f \circ \phi_{i,j}^{-1} \colon \phi_{i,j}(F_{i,j}) \rightarrow X$. The fact that the map in \eqref{eq:htilde} is bi-Lipschitz on $\phi_{i,j}(F_{i,j})$ allows us to apply Proposition \ref{prop:directional} to the map $\tilde{f}_{i,j}$ on the set $\phi_{i,j}(F_{i,j})$. (Note that, while $\tilde{f}_{i,j}$ is not necessarily $1$-Lipschitz, its Lipschitz constant is controlled depending on $n$,$m$, $\gamma$.)

We thus apply Proposition \ref{prop:directional} to the map $\tilde{f}_{i,j}$ on the set $\phi_{i,j}(F_{i,j})$ with parameter $\alpha=\alpha_{i,j}>0$ sufficiently small, depending on $n$, $m$, $\gamma$, and the bi-Lipschitz constant of $\phi_{i,j}$ (and thus ultimately only on $n$, $m$, and $\gamma$).

This decomposes each set $\phi_{i,j}(F_{i,j})$ into
$$ \phi_{i,j}(F_{i,j}) = \tilde{E}_{i,j}^1 \cup \tilde{E}_{i,j}^2 \cup \dots \cup \tilde{E}_{i,j}^{M_{i,j}} \cup \tilde{G}_{i,j},$$
where each $\tilde{E}_{i,j}^k$ is a Hard Sard set for $\tilde{f}_{i,j}$ and 
\begin{equation}
\HH^{n+m}(\tilde{G}_{i,j}) < \alpha_{i,j}.
\end{equation}
In particular,
\begin{equation}\label{eq:ggarbage}
\HH^{n,m}_\infty(f, \phi_{i,j}^{-1}(\tilde{G}_{i,j})) \leq \HH^{n+m}(\phi_{i,j}^{-1}(\tilde{G}_{i,j})) < \frac{\gamma}{10NM_i},
\end{equation}
if we choose $\alpha_{i,j}$ sufficiently small, using Lemma \ref{lem:areaboundscontent} and the fact that $\phi_{i,j}$ is quantitatively bi-Lipschitz.

Note that $M_{i,j}$ and the Hard Sard constants $C_{Lip}$ for $\tilde{E}_{i,j}^k$ are controlled depending only on $\gamma$, $N$, and $L$, and hence only on $\gamma$. Let $g_{i,j}^k$ denote the globally $C_{Lip}$-bi-Lipschitz Hard Sard map for $\tilde{f}_{i,j}$ on the set $\tilde{E}_{i,j}^k$.

Recall that, because of condition \eqref{eq:gshear} in Proposition \ref{prop:directional}, we know that $g_{i,j}^k|_{\tilde{E}^k_{i,j}}$ is a ``shear'' that preserves the $y$-coordinate. In particular, if $(x,y)\in\tilde{E}^k_{i,j}$, then
\begin{equation}\label{eq:gshear2}
g^k_{i,j}(x,y) = (x',y) \text{ for some } x'\in\RR^n.
\end{equation}

Let $E_{i,j}^k = \phi_{i,j}^{-1}( \tilde{E}_{i,j}^k)$. Note that the number of these sets was controlled in each step, depending ultimately only on $n$, $m$, and $\gamma$.

We now claim that each pair $(E_{i,j}^k, g_{i,j}^k \circ \phi_{i,j})$ is a Hard Sard pair for our original map $f$. Let us fix indices $i,j,k$ as above and call this pair $(E, g\circ \phi)$.

First of all, Condition \eqref{HS2} of Definition \ref{def:HSpair} holds, because $g$ and $\phi$ were constructed above as globally defined bi-Lipschitz mappings with quantitative constants on $\RR^{n+m}$.

We next verify  that Condition \eqref{HS3} of Definition \ref{def:HSpair} holds for the pair $(E,g\circ\phi)$. Fix $(x,y)$ and $(x',y')$ in $g\circ \phi(E)$. By construction $(\phi(E), g)$ is a Hard Sard pair for $f\circ \phi^{-1}$. Therefore,
$$ x=x' \Leftrightarrow f\circ \phi^{-1} \circ g^{-1}(x,y) = f\circ \phi^{-1} \circ g^{-1}(x',y')\Leftrightarrow f \circ (g\circ \phi)^{-1}(x,y) = f \circ (g\circ \phi)^{-1}(x',y'),$$
which exactly verifies Condition \eqref{HS3} for the Hard Sard pair $(E,g\circ \phi)$ for $f$.

Lastly, we verify that Condition \eqref{HS4} of Definition \ref{def:HSpair} holds for the pair $(E,g\circ\phi)$. In other words, we verify that the mapping
$$ (x,y) \mapsto (f\circ(g\circ \phi)^{-1}(x,y),y)$$
is bi-Lipschitz on $g(\phi(E))$, with quantitative control on the bi-Lipschitz constant. We will again use the fact that $(\phi(E), g)$ is a Hard Sard pair for $f\circ \phi^{-1}$, as well as property \eqref{eq:gshear2} of $g$.

Consider two points $(x_1, y_1)$ and $(x_2, y_2)$ in $g(\phi(E))$. Using \eqref{eq:gshear2}, we can write
$$ g^{-1}(x_i, y_i) = (x'_i, y_i) \text{ for } i=1,2.$$
Then
\begin{align*}
d\left( \left( f\circ(g\circ \phi)^{-1}(x_1, y_1),y_1\right), \left(f\circ(g\circ \phi)^{-1}(x_2, y_2),y_2\right) \right)&=d\left( \left(f\circ \phi^{-1}(x'_1, y_1),y_1\right), \left(f\circ \phi^{-1}(x'_2, y_2),y_2\right) \right)\\ 
&\approx |(x'_1, y_1) - (x'_2, y_2)|\\
&\approx |(x_1, y_1) - (x_2,y_2)|.
\end{align*}
The second line is because $(\phi(E),g)$ is a Hard Sard pair for $f\circ \phi^{-1}$, and the third is because $g$ is bi-Lipschitz. All implied constants depend only on $n$, $m$, and the Hard Sard constant associated to $(\phi(E),g)$, which we controlled depending on $n$,$m$, and $\gamma$. This verifies Condition \eqref{HS4} of Definition \ref{def:HSpair} for  $(E,g\circ\phi)$.

Thus, each pair $(E_{i,j}^k, g_{i,j}^k \circ \phi_{i,j})$ defined above is a Hard Sard set for $f$. From our work above, all Hard Sard constants $C_{Lip}$ and the total number of these sets is controlled, depending only on $\gamma$, $n$, and $m$.

To prove Theorem \ref{thm:hardersarder}, it remains to control the size of the ``garbage set'' $Q_0 \setminus \cup E_{i,j}^k$. This set can be written as
$$ \hat{G} = G \cup \bigcup_{i=1}^N G_i \cup \bigcup_{i=1}^N \bigcup_{j=1}^{M_i}  \phi_{i,j}^{-1}(\tilde{G}_{i,j}).$$
Thus, we have
$$\HH^{n,m}_\infty(f,\hat{G}) < \gamma$$
using equations \eqref{eq:hbilipgarbage}, \eqref{eq:phigarbage}, \eqref{eq:ggarbage}, and Lemma \ref{lem:subadditive}.

This completes the proof of Theorem \ref{thm:hardersarder}.

\end{proof}

\section{Proof of Theorem \ref{thm:onedim}}\label{sec:onedim}
In this section, we prove Theorem \ref{thm:onedim}.

\begin{proof}[Proof of Theorem \ref{thm:onedim}]
Let $Q_0 = [0,1]^{1+m}$ and let $f\colon Q_0 \rightarrow X$ be a $1$-Lipschitz map into a metric space $X$.

Suppose that $\HH^{1,m}(f,Q_0)<\eta$ for some $\eta>0$. We will show that $\diam(f(Q_0))<\eta'$, for some constant $\eta'$ depending only on $\eta$ and $m$ and tending to $0$ as $\eta$ tends to $0$. This suffices to prove Theorem \ref{thm:onedim}.

The fact that  $\HH^{1,m}(f,Q_0)<\eta$ means that there is a cover $\mathcal{Q}$ of $Q_0$ by almost-disjoint dyadic cubes satisfying
$$ \eta > \sum_{Q\in\QQ} \HH^1_\infty(f(Q))\side(Q)^m \approx \sum_{Q\in\QQ} \diam(f(Q))\side(Q)^m.$$
In the equality, we use the fact that for compact, connected sets (like $f(Q)$), one-dimensional Hausdorff content is comparable to diameter. 

If $m=0$, the result holds, since
$$ \diam(f(Q_0)) \lesssim \HH^1_\infty (f(Q_0)) \leq \sum_{Q\in\QQ} \HH^1_\infty(f(Q)) < \eta.$$

For the remainder of the proof, we therefore assume $m\geq 1$.

Fix $\delta>0$, to be specified later, and let
$$\QQ_\delta = \{Q\in\QQ: \diam f(Q) \leq \delta \side(Q)\}.$$
Let $A$ be the union of all cubes in $\QQ\setminus \QQ_\delta$. Then
\begin{equation}\label{eq:Asize}
 \HH^{1+m}(A) = \HH^{1+m}\left( \bigcup_{Q\in \QQ\setminus\QQ_\delta} Q \right) \leq \sum_{Q\in\QQ\setminus\QQ_\delta} \delta^{-1} \diam(f(Q))\side(Q)^m \lesssim \frac{\eta}{\delta}.
\end{equation}

For each $i\in \{1,\dots,1+m\}$ and $y=(y_1, \dots, y_m) \in [0,1]^m$, consider the line segment
$$  L_i^y = \{ (y_1, y_2, \dots, y_{i-1}, t, y_i, \dots, y_m) : t\in [0,1]\} \subseteq Q_0.$$
In other words, $L_i^y$ is simply the line segment in coordinate direction $i$ emitting from point $y$ in the appropriate orthogonal $m$-plane.

By \eqref{eq:Asize} and Fubini's theorem, for each $i\in \{1, \dots, 1+m\}$, we have 
$$ \frac{\eta}{\delta} \gtrsim |A| = \int_{[0,1]^m} \HH^1(L_i^y \cap A) \,dy$$

Fix $s\in (0,1)$, also to be specified below. There is therefore a Borel set $K_i \subseteq [0,1]^m$ and an absolute constant $c$ such that
\begin{equation}\label{eq:Ki}
 |K_i| > 1-c\frac{\eta}{\delta s}
\end{equation}
and
$$ \HH^{1}(L_i^y \cap A) \leq s \text{ for all } y\in K_i.$$
Without affecting these bounds, we may also assume for technical convenience that no coordinate of any $y\in K_i$ is dyadic, i.e., that if $y\in K_i$ then $y\neq a2^{-b}$ for any integers $a$ and $b$. This removes only a set of measure zero from $K_i$.

With this assumption, we see that, for each $i\in\{1, \dots, 1+m\}$ and $y\in K_i$, the line $L_i^y$ contains no non-trivial segment that lies on the boundary of a dyadic cube. Hence, if $y\in K_i$,
$$ 1 = \length(L_i^y) = \sum_{Q\in\QQ, Q\cap L_i^y\neq \emptyset} \diam(Q)$$
and
$$ s \geq \length(L_i^y \cap A) = \sum_{Q\in\QQ\setminus \QQ_\delta, Q\cap L_i^y\neq \emptyset} \diam(Q).$$

Given $y\in K_i$, we can therefore write
\begin{align}
\diam(f(L_i^y)) &\leq \sum_{Q\in \QQ, Q\cap L_i^y \neq \emptyset} \diam(f(L_i^y \cap Q))\\
&= \sum_{Q\in \QQ_\delta, Q\cap L_i^y \neq \emptyset} \diam(f(L_i^y \cap Q)) + \sum_{Q\in\QQ\setminus \QQ_\delta, Q\cap L_i^y \neq \emptyset} \diam(f(L_i^y \cap Q))\\
&\leq   \sum_{Q\in \QQ_\delta, Q\cap L_i^y \neq \emptyset} \delta\diam(Q) + s\\
&\leq \delta + s \label{eq:Ldiam}
\end{align}

It follows from \eqref{eq:Ki} that $K_i$ is $c\left(\frac{\eta}{\delta s}\right)^{1/m}$-dense in $[0,1]^m$, for some (new) constant $c=c_m>0$. Therefore, for each $i$, the set
$$ \hat{K}_i = \bigcup_{y\in K_i} L_i^y $$
is $c\left(\frac{\eta}{\delta s}\right)^{1/m}$-dense in $Q_0$.

Consider any $p, q\in Q_0$. There is a path $\gamma$ from $p$ to $q$ in $Q_0$ described as follows:
\begin{enumerate}[(i)]
\item\label{enum1} Travel along a segment of length $\leq c \left(\frac{\eta}{\delta s}\right)^{1/m}$ from $p$ to a line $L_1^y \subseteq \hat{K}_1$,
\item\label{enum2} Travel along a segment in $L_1^y$ to a point $p_1$ with $\pi_1(p_1) = \pi_1(q)$.
\item\label{enum3} Travel along a segment of length $\leq c \left(\frac{\eta}{\delta s}\right)^{1/m}$ from $p_1$ to a line $L_2^y \subseteq \hat{K}_2$
\item\label{enum4} Travel along a segment in $L_2^y$ to a point $p_2$ with $\pi_2(p_2) = \pi_2(q)$ and $|\pi_1(p_2) - \pi_1(q)|\lesssim \left(\frac{\eta}{\delta s}\right)^{1/m}$.
\item\label{enum5} Repeat steps \eqref{enum3} and \eqref{enum4} for $i=3,4, \dots, 1+m$, finally reaching a point $p_{1+m}$ with
$$ |\pi_i(p_{1+m}) - \pi_i(q)| \lesssim \left(\frac{\eta}{\delta s}\right)^{1/m} \text{ for each } i=1,\dots 1+m$$
\item\label{enum6} Lastly, travel along a segment of length $\lesssim \left(\frac{\eta}{\delta s}\right)^{1/m}$ from $p_{1+m}$ to $q$.
\end{enumerate}
The implied constants here depend only on $m$.

For each of the $m+1$ segments $\gamma_i$ in $\gamma$ lying inside some $L_i^y$ with $y\in K_i$, we have
$$\diam f(\gamma_i) \leq \diam(f(L_i^y)) \leq \delta + s,$$
using \eqref{eq:Ldiam}.

In total, therefore,
\begin{align*}
d(f(p),f(q)) &\leq \diam f(\gamma)\\
&\lesssim \left(\frac{\eta}{\delta s}\right)^{1/m} + (\delta+s)
\end{align*}
Since $p$ and $q$ were arbitrary points in $Q_0$, we have
$$ \diam f(Q_0) \lesssim \left(\frac{\eta}{\delta s}\right)^{1/m} + (\delta+s).$$
If we now set
$$ \delta = s = \eta^{\frac{1}{m+2}}$$
we obtain
$$ \diam f(Q_0) \lesssim \eta^{\frac{1}{m+2}} + \eta^{\frac{1}{m+2}}.$$
This proves Theorem \ref{thm:onedim} (with $\eta \approx \alpha^{m+2}$).
\end{proof}

\section{A counterexample if the image size is not controlled}\label{sec:example}
We show by an explicit construction that the condition $\HH^{n}(f(Q_0))\leq 1$ is necessary for Theorem \ref{thm:hardersarder} to hold with quantitative bounds, as stated. Our example will show that even assuming $\HH^n(f(Q_0)) < \infty$ is not sufficient. Our construction in this section will be in the case $n=m=1$, so that below $Q_0=[0,1]^2$ is the unit cube in $\RR^2$.

\begin{proposition}\label{prop:example}
There is an absolute constant $\delta>0$ with the following property:

For each $k\in\mathbb{N}$, there is a metric space $X$ of finite $\HH^1$-measure and a $1$-Lipschitz map
$$ f:Q_0 \rightarrow X$$
with
\begin{equation}\label{eq:exH11}
\HH^{1,1}_\infty(f, Q_0) \geq \delta > 0,
\end{equation}
and such that if $E_1, \dots, E_M$ are Hard Sard sets for $f$ with constant $C_{Lip}$, and if
$$ \HH^{1,1}_\infty(f, Q_0 \setminus \cup_i E_i) < \delta/2$$
then the number of Hard Sard sets $M$ must satisfy
$$ M \gtrsim_{C_{Lip}} \delta 2^k.$$
\end{proposition}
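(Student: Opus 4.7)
The plan is to construct, for each $k$, a $1$-Lipschitz map $f_k \colon Q_0 \to X_k$ into a metric space $X_k$ of finite $\HH^1$-measure such that (i) $\HH^{1,1}_\infty(f_k, Q_0) \geq \delta$ for an absolute constant $\delta > 0$, and (ii) every Hard Sard set $E$ for $f_k$ with constant $C_{Lip}$ satisfies $\HH^{1,1}_\infty(f_k, E) \lesssim_{C_{Lip}} 2^{-k}$. Once (i) and (ii) are established, the proposition follows from the sub-additivity of mapping content (Lemma \ref{lem:subadditive}): if $Q_0 = E_1 \cup \dots \cup E_M \cup G$ with $\HH^{1,1}_\infty(f_k, G) < \delta/2$, then
\[
\frac{\delta}{2} \leq \sum_{i=1}^M \HH^{1,1}_\infty(f_k, E_i) \lesssim_{C_{Lip}} M \cdot 2^{-k},
\]
yielding $M \gtrsim_{C_{Lip}} \delta \, 2^k$ as claimed.

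For the construction, I would take $X_k \subseteq \RR^2$ to be a \emph{snake curve}: the union of the horizontal segments $[0,1] \times \{j/2^k\}$ for $j = 0, \ldots, 2^k$, together with short vertical connectors at $x = 0$ and $x = 1$ alternating to produce a single connected arc, equipped with the Euclidean metric inherited from $\RR^2$. Then $\HH^1(X_k) \sim 2^k < \infty$ and $\diam(X_k) \leq \sqrt{2}$. The map $f_k$ is the nearest-point projection of $Q_0$ onto $X_k$, which is automatically $1$-Lipschitz and surjective, so $\HH^1(f_k(Q_0)) = \HH^1(X_k) \sim 2^k$, comfortably violating the hypothesis $\HH^n(f(Q_0)) \leq 1$ of Theorem~\ref{thm:hardersarder}.

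For (i), observe that inside each horizontal slab $[0,1] \times [(j-1/2)/2^k, (j+1/2)/2^k]$, away from the vertical connector regions, $f_k$ acts essentially as the vertical projection onto the $j$-th pass of the snake. Hence for any almost-disjoint dyadic cube cover $\{Q_i\}$ of $Q_0$, cubes $Q_i$ of side $s_i$ lying inside such a slab satisfy $\HH^1_\infty(f_k(Q_i)) \gtrsim s_i$, and summing gives $\sum_i \HH^1_\infty(f_k(Q_i)) \side(Q_i) \gtrsim \sum_i s_i^2 \gtrsim |Q_0| = 1$, which yields (i) for some absolute $\delta > 0$.

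The heart of the argument is (ii), which is also the main obstacle. By Lemma~\ref{lem:HScontent}, it suffices to show that every Hard Sard set $E$ has $\HH^2(E) \lesssim_{C_{Lip}} 2^{-k}$; equivalently, that $E$ is essentially confined to an $O(C_{Lip})$-neighborhood of a single horizontal strip. The fibers of $f_k$ in the original coordinates are short vertical segments of length $\sim 2^{-k}$, one within each horizontal strip. The Hard Sard conditions (HS3)--(HS4) require the bi-Lipschitz change of coordinates $g$ to map each such segment to a vertical line in $g(E)$, with distinct segments forced to lie over distinct first coordinates. If $E$ intersected many strips substantially, then a pair of fibers in adjacent strips at a common $x_0$, separated by only $2^{-k}$ in $Q_0$, would be mapped by $g$ to vertical lines at distinct first coordinates, while the continuity and bi-Lipschitz constraints on $g$ bound the total first-coordinate ``spread'' between such nearby fibers. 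Making this quantitative and handling the vertical connector regions of the snake is the technical crux, and yields the bound $\HH^2(E) \lesssim_{C_{Lip}} 2^{-k}$ that closes the argument.
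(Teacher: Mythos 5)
Your reduction to subadditivity and the overall strategy (build a coarsely injective map with short fibers, show each Hard Sard set must then have tiny measure) match the paper's, and the snake curve is in spirit the same example as the paper's star-of-stars tree. But two steps would fail as written. First, nearest-point projection onto the snake is \emph{not} $1$-Lipschitz, nor even continuous: the snake is non-convex, and near the medial axis between two horizontal passes, points at distance $2\epsilon$ are sent to points a fixed distance $\approx 2^{-k}$ apart, so the local Lipschitz constant blows up. You would instead need to define $f_k$ on the interiors of the strips (where the vertical projection is genuinely Lipschitz) and then extend to all of $Q_0$ -- and that extension is the real obstruction: the snake with the subspace metric is not a geodesic tree, so one cannot in general extend $1$-Lipschitzly while staying inside it, and extending into $\ell_\infty$ via Kuratowski gives no control on $\HH^1$ of the image, which the proposition requires to be finite. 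This is precisely why the paper manufactures a geodesic tree target of length $\approx 2^k$: the Lipschitz extension then stays inside the tree automatically.

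Second, the heuristic for claim (ii) does not close. Knowing that nearby fibers must map to distinct first coordinates spaced $\lesssim C_{Lip}2^{-k}$ apart gives no contradiction: $2^k$ distinct first coordinates at that spacing fit comfortably inside $[0,1]$; and $\HH^2(E)\lesssim 2^{-k}$ is not equivalent to $E$ being confined near one strip. What actually closes the argument is the one the paper runs: fix the ``good'' region $A$ and cover it by \emph{small} cubes $Q$ of side $2^{-k}$ with pairwise disjoint $f$-images (coarse injectivity). By (HS3), the sets $\pi(g(E\cap Q))$ are then pairwise disjoint, and since $\diam Q\approx 2^{-k}$ the bi-Lipschitz $g$ gives $\diam(g(E\cap Q))\lesssim C_{Lip}2^{-k}$, hence the vertical extent of $g(E\cap Q)$ is $\lesssim C_{Lip}2^{-k}$. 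Summing, $|g(E\cap A)|\lesssim C_{Lip}2^{-k}\sum|\pi(g(E\cap Q))|\leq C_{Lip}2^{-k}$. Using strips in place of small cubes kills this step, since $\diam(g(E\cap\text{strip}))\lesssim C_{Lip}$ gives no useful bound on the vertical extent. Finally, the subadditivity at the end must be applied to the decomposition of $A$, controlling $\HH^{1,1}_\infty(f,E_i\cap A)$ and using $\HH^{1,1}_\infty(f,A)\geq\delta$, not $\HH^{1,1}_\infty(f,E_i)$ on all of $E_i$.
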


We begin by describing the construction of $f$, then prove that it has the properties given in Proposition \ref{prop:example}. By rescaling, it suffices to construct a $4$-Lipschitz map with the above properties, and that is what we will do.

Let $k\in\mathbb{N}$ be fixed. We first define the metric space $X$ that our example will map into. The space $X$ will be a one-dimensional simplicial tree of finite length. To be more precise, first let $S$ denote a ``star'' that consists of $2^{k-1}$ copies of the interval $[0,2^{-k}]$ glued at the points $0$. We call these the ``spikes'' of $S$.

Now consider $2^{k-1}$ copies of $S$: $S_0, \dots S_{2^{k-1}-1}$, each of which has its own central vertex $v_i$. The space $X$ then consists of the union of $S_1, \dots, S_{2^k-1}$ along with a copy of $[0,1]$, such that $v_i$ is attached to the point $i/2^{k-1} \in [0,1]$. We equip $X$ with the intrinsic path metric, which makes $X$ a geodesic tree with 
\begin{equation}\label{eq:Xlength}
\HH^1(X)  \approx 2^{k} <\infty.
\end{equation}
See Figure \ref{fig:example} for a picture of $X$ in the case $k=3$.

\begin{figure}[ht]
\caption{The space $X$ in the case $k=3$. Each ``spike'' has length $1/4 = 2^{-(k-1)}$.}
\centering
\includegraphics[width=0.7\textwidth]{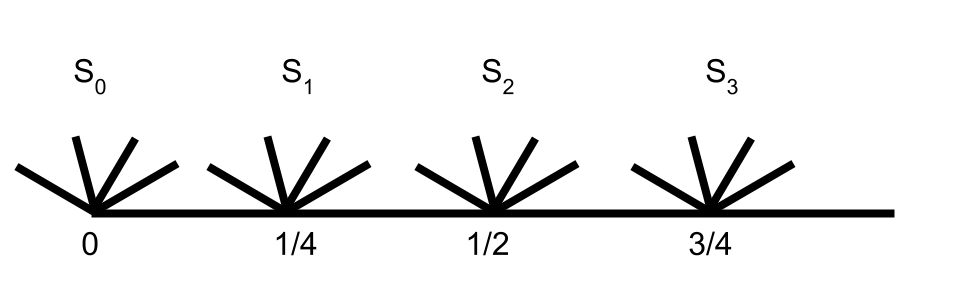}\label{fig:example}
\end{figure}

We now define a mapping $f:Q_0 \rightarrow X$ as follows. First, let
$$ \QQ \subseteq \Delta_{k}$$
denote the collection of dyadic cubes in $\Delta_k$ that are the ``bottom left'' cube in their parent of $\Delta_{k-1}$. 

Thus, if $Q\in\QQ$, then $Q$ is of the form
$$ Q_{a,b} = [(2a)2^{-k}, (2a+1)2^{-k}] \times [(2b)2^{-k}, (2b+1)2^{-k}]$$
where $a,b\in \{0, \dots, 2^{k-1}-1\}$. Let $A$ be the interior of $\cup_{Q\in \QQ} Q$, an open set in $Q_0$.

Let $\pi: \RR^2\rightarrow \RR$ be the projection to the $x$-coordinate. For $Q=Q_{a,b}\in \QQ$, we define $f|_Q = \iota_Q \circ \pi$, where $\iota_Q$ is an isometry that sends the interval $\pi(Q) = [(2a)2^{-k}, (2a+1)2^{-k}]$ to the $b$th spike of star $S_a$, with $\iota_Q((2a)2^{-k}) = v_a$.

It is not hard to see that $f|_A$ is $4$-Lipschitz. Therefore, $f$ extends to a $4$-Lipschitz mapping of $Q_0$ into the tree $X$. (See \cite[section 2.2.2]{AS12}.)

Moreover, $f|_A$ has the following ``coarse injectivity'' property: if $p$ and $p'$ are points in the interiors of distinct cubes of $\QQ$, then $f(p) \neq f(p')$. (They map to different spikes.)

We now argue that the restriction of $f$ to $A$ has mapping content bounded below, independent of $k$.

\begin{lemma}\label{lem:exlowerbound}
There is a constant $\delta>0$, independent of $k$, such that $\HH^{1,1}_\infty(f,A)\geq \delta$.
\end{lemma}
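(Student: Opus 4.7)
The plan is to reduce the estimate on $\HH^{1,1}_\infty(f,A)$ to a scale-by-scale pointwise inequality: for every dyadic cube $R\in\Delta$ with $R\cap A\neq\emptyset$, I would establish
$$\HH^1_\infty(f(R)) \gtrsim \side(R)$$
with an absolute implied constant. Once this is in hand, any almost-disjoint dyadic cover $\{R_i\}$ of $A$ satisfies
$$\sum_i \HH^1_\infty(f(R_i))\side(R_i) \gtrsim \sum_i \side(R_i)^2 = \sum_i |R_i| \geq |A| = \tfrac{1}{4},$$
because $A$ is the interior-disjoint union of $4^{k-1}$ squares of side $2^{-k}$. Taking the infimum over covers then yields $\HH^{1,1}_\infty(f,A)\geq \delta$ for an absolute $\delta>0$.

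For the pointwise inequality I would split on whether $\side(R)\leq 2^{-k}$ or $\side(R)>2^{-k}$. In the first case, the pairwise non-adjacency of the cubes of $\QQ$ in $Q_0$ forces $R\subseteq Q_{a,b}$ for some $Q_{a,b}\in\QQ$. Since $f|_{Q_{a,b}}=\iota_{Q_{a,b}}\circ \pi$ restricted to $R$ is isometric onto its image, $\HH^1_\infty(f(R))=\side(R)$. In the second case, write $\side(R)=2^{-j}$ with $j\leq k-1$. When $j\leq k-2$, $R$ contains cubes $Q_{a_1,b_1}, Q_{a_2,b_2}$ with $|a_1-a_2| = 2^{k-1-j}-1$, and the geometry of $X$ (stars along the backbone at spacing $2^{-(k-1)}$, each spike of length only $2^{-k}$) forces any point of $f(Q_{a_1,b_1})\subseteq S_{a_1}$ to lie at distance at least $|a_1-a_2|/2^{k-1}$ in $X$ from any point of $f(Q_{a_2,b_2})\subseteq S_{a_2}$, giving $\diam f(R)\gtrsim \side(R)$. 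The remaining subcase $j=k-1$ follows since $R$ then contains a single $Q_{a,b}\in\QQ$ whose image is a full spike of length $2^{-k}=\side(R)/2$.

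To pass from the diameter bound to the content bound I use that $f(R)$ is connected (as the image of the connected set $R$ under the continuous extension of $f$), together with the standard fact that any connected subset $E$ of a metric space satisfies $\HH^1_\infty(E)\geq \diam(E)$; this is immediate from projecting $E$ onto $\RR$ by the $1$-Lipschitz function $x\mapsto d(x,p_0)$ for a fixed $p_0\in E$, whose image is an interval of length $\diam(E)$. The main obstacle is the second case of the pointwise inequality: one has to identify the right pairs of $\QQ$-cubes inside $R$ and exploit that distinct stars are genuinely far apart in $X$ despite each spike being short. Once that geometric estimate is in place, the remainder is direct summation.
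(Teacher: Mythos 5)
Your proof is correct and follows essentially the same strategy as the paper: establish the pointwise bound $\HH^1_\infty(f(R))\gtrsim\side(R)$ for every dyadic $R$ meeting $A$ (small cubes via isometry onto a subinterval of a spike, large cubes via the backbone separation between distinct stars), then sum over an almost-disjoint cover and compare with $|A|=1/4$. The paper streamlines the large-cube case slightly by choosing $\QQ$-cubes that merely \emph{intersect} $R$ rather than lie inside it, so it does not need your extra subcase $j=k-1$, but the estimates and the underlying geometry are the same.
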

\begin{proof}
Consider any (closed) dyadic cube $R\in\Delta$ that intersects $A$. If $\side(R) \leq 2^{-k}$, then $R$ is contained in a cube $Q\in \QQ$ and so 
$$ \HH^1_\infty(f(R)) \approx \diam(f(R)) = \side(R).$$
If $\side(R) > 2^{-k}$, then there are cubes $Q_{a,b}$ and $Q_{a',b}$ in $\QQ$ that intersect $R$ and have 
$$|a2^{-(k-1)}-a'2^{-(k-1)}| \gtrsim \side(R),$$
with an absolute implied constant. It follows that $f(R)$ intersects the stars $S_a$ and $S_{a'}$, and so
$$ \HH^1_\infty(f(R)) = \diam(f(R)) \geq \dist(S_a, S_{a'}) = |a-a'|2^{-(k-1)} \gtrsim \side(R).$$
Thus, for all cubes $R\in \Delta$ that intersect $A$, we have
$$ \HH^1_\infty(f(R)) \gtrsim \side(R).$$
Hence, if $\{R_j\}$ is an arbitrary collection of almost-disjoint dyadic cubes covering $A$, we have
$$ \sum_{j} \HH^1_\infty(f(R_j)) \side(R_j) \gtrsim \sum_{j: R_j \cap A \neq \emptyset} \side(R_j)^2 \geq |A| = \frac{1}{4}.$$
Therefore
$$ \HH^{1,1}_\infty(f,A) \gtrsim 1.$$
\end{proof}

Next, we argue that no Hard Sard set $E$ can have large intersection with $A$.

\begin{lemma}
If $E\subseteq Q_0$ is a Hard Sard set for $f$, then $|\overline{E} \cap A|\lesssim 2^{-k}$. The implied constant depends only on the Hard Sard constant $C_{Lip}$ for $E$, and not on $k$.
\end{lemma}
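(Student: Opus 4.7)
The plan is to exploit the bi-Lipschitz map $\Psi : \bar E \to X \times \RR$ defined by $\Psi(p) = (f(p), \pi_y(g(p)))$; conditions \eqref{HS2}--\eqref{HS4} force $\Psi$ to be bi-Lipschitz with a constant depending only on $C_{Lip}$, so that $|\bar E \cap A| \approx \HH^2(\Psi(\bar E \cap A))$, and all the work is to estimate this latter quantity.

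First I would decompose according to the cube structure. Since $f(Q_{a,b})$ is contained in the spike $S_{a,b}$ of length $2^{-k}$, the image $\Psi(\bar E \cap Q_{a,b})$ lies in the ``sheet'' $S_{a,b} \times I_{a,b}$, where $I_{a,b} := \pi_y(g(\bar E \cap Q_{a,b}))$. Distinct sheets meet only along the branching lines $\{v_a\} \times \RR$ and so are $\HH^2$-essentially disjoint in $X\times\RR$; summing therefore gives
$$|\bar E \cap A| \;\lesssim_{C_{Lip}}\; 2^{-k} \sum_{a,b} |I_{a,b}|.$$
The lemma thus reduces to the estimate $\sum_{a,b} |I_{a,b}| \lesssim_{C_{Lip}} 1$.

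To prove this I would rewrite the sum as $\int_\RR |\{(a,b): y^* \in I_{a,b}\}|\, dy^*$, and, using that $\pi_y(g(\bar E))$ has length at most $C_{Lip} \cdot \diam(\bar E)$, aim to show the integrand is $O_{C_{Lip}}(1)$ for a.e.\ $y^*$. Fixing such a $y^*$, condition \eqref{HS4} makes $F(\cdot, y^*)$ a bi-Lipschitz map from a subset of $\RR$ into the tree $X$, and so its image $f(E_{y^*})$ is a pairwise-disjoint union of geodesic arcs. Each such arc, being the unique geodesic between its endpoints in a tree, meets the spike portion $\bigcup_{a,b} S_{a,b} = f(A)$ only near its (at most two) endpoints; it therefore visits at most two of the $S_{a,b}$'s and contributes length at most $2 \cdot 2^{-k}$ to $f(A)$. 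In particular the multiplicity at $y^*$ is bounded by twice the number of geodesic arcs in the slice.

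The main obstacle will be controlling the number of arcs on average in $y^*$, and here the plan is to exploit the planarity of $\bar E$. The bi-Lipschitz identification $\tilde F : g(\bar E) \to \Psi(\bar E)$ maps the planar set $g(\bar E) \subseteq \RR^2$ to a subset of $X \times \RR$ which, near each branching line $\{v_a\} \times \RR$, is a ``book'' of $2^{k-1} + 2$ half-plane sheets meeting along a line. A bi-Lipschitz image of a planar region cannot occupy more than $O_{C_{Lip}}(1)$ such sheets locally: if a ball in $\bar E$ of radius $r$ had its $\Psi$-image spread across $N$ pages of the book, the image would have $\HH^2$-measure $\gtrsim N r^2$, whereas bi-Lipschitzness forces it to be $\lesssim C_{Lip}^2 r^2$, forcing $N \lesssim C_{Lip}^{O(1)}$. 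Combining this local multiplicity bound with the bounded extent of $\pi_y(g(\bar E))$ then yields the required global control on $\sum_{a,b}|I_{a,b}|$, and completes the proof.
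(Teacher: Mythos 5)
Your reduction $|\bar E\cap A|\lesssim_{C_{Lip}} 2^{-k}\sum_{a,b}|I_{a,b}|$ is correct (and only needs subadditivity of $\HH^2$, not the claimed essential disjointness of the sheets), but the target estimate $\sum_{a,b}|I_{a,b}|\lesssim_{C_{Lip}} 1$ is \emph{false} for Hard Sard sets, so the proof cannot be finished along these lines. Take $g=\mathrm{id}$ and let $E=\bigcup_{a,b}\{x_{a,b}\}\times[2b\cdot 2^{-k},(2b+1)\cdot 2^{-k}]$, one short vertical segment per cube $Q_{a,b}\in\QQ$, with the $x_{a,b}\in\bigl((2a)2^{-k},(2a+1)2^{-k}\bigr)$ pairwise distinct. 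One checks from the definition of $f$ that $(E,\mathrm{id})$ satisfies all of Definition~\ref{def:HSpair} with a constant independent of $k$: each segment is exactly a fiber of $f$, and $(x,y)\mapsto(f(x,y),y)$ is bi-Lipschitz on $E$ because tree distances between points on distinct spikes are comparable to the corresponding distances in $Q_0$. Yet $|I_{a,b}|=2^{-k}$ for each of the $2^{2(k-1)}$ cubes, so $\sum_{a,b}|I_{a,b}|\approx 2^{k-2}$. (Here $|\bar E\cap A|=0$ and the lemma holds trivially; fattening the segments to width $2^{-100k}$ gives a positive-measure version with the same failure.) The underlying problem is that the Hard Sard conditions impose no constraint at all on the vertical projections $\pi_y(g(\bar E\cap Q))$, which can overlap with multiplicity $\approx 2^k$. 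Both supporting steps of your multiplicity argument also break down: the slice $(\RR\times\{y^*\})\cap g(\bar E)$ is an arbitrary closed subset of a line, so its $F(\cdot,y^*)$-image is not a disjoint union of geodesic arcs and the ``number of arcs'' is not defined; and the volume heuristic ``$N$ pages implies $\HH^2\gtrsim Nr^2$'' fails because the piece on each page can be a thin sliver hugging the spine.

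The correct projection to use is $\pi_x$, and then the needed estimate is essentially free. Condition~\eqref{HS3} of Definition~\ref{def:HSpair} identifies the fibers of $F=f\circ g^{-1}$ on $g(E)$ with the vertical slices $(\{x\}\times\RR)\cap g(E)$, and since $f$ sends distinct cubes of $\QQ$ to distinct spikes, no vertical slice can meet both $g(\bar E\cap Q)$ and $g(\bar E\cap Q')$ for $Q\neq Q'$. Hence the sets $\pi_x(g(\bar E\cap Q))$, $Q\in\QQ$, are pairwise disjoint and $\sum_Q|\pi_x(g(\bar E\cap Q))|\lesssim_{C_{Lip}} 1$ automatically. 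Combining this with $|g(\bar E\cap Q)|\lesssim_{C_{Lip}}2^{-k}|\pi_x(g(\bar E\cap Q))|$ (Fubini plus $\diam g(\bar E\cap Q)\lesssim_{C_{Lip}}2^{-k}$) proves the lemma directly, with no slice-by-slice multiplicity control and no planarity argument.
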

\begin{proof}
As $\overline{E}$ is also a Hard Sard set for $f$, we may assume without loss of generality that $\overline{E}=E$.

Let $g$ be the $C_{Lip}$-bi-Lipschitz mapping associated to $E$ and let $F=f\circ g^{-1}$. 

We first observe that if $Q,Q'$ are interiors of cubes in $\QQ$, then
\begin{equation}\label{eq:disjointprojection}
 \pi(g(E\cap Q)) \cap \pi(g(E\cap Q')) = \emptyset.
\end{equation}

Indeed, if $x\in \pi(g(E\cap Q)) \cap \pi(g(E\cap Q'))$, then the vertical line $x\times [0,1]$ intersects both $g(E) \cap g(Q)$ and $g(E) \cap g(Q')$. By Condition \eqref{HS3} of Definition \ref{def:HSpair}, $(x\times [0,1]) \cap g(E)$ is a fiber of $F|_{g(E)}$. It follows that there are points $p\in Q$ and $p'\in Q'$ with $F(g(p)) = F(g(p'))$, i.e., $f(p) = f(p')$. However, by our construction of $f$ this is impossible: the map $f$ sends $Q$ and $Q'$ either to different stars $S_i$ or to two different spikes of the same $S_i$. This proves \eqref{eq:disjointprojection}.

We can therefore compute
\begin{align*}
|E\cap A| &\lesssim |g(E\cap A)|\\
&\leq \sum_{Q \text{ the interior of a cube in } \QQ} |g(E\cap Q)|\\
&\leq \sum_{Q \text{ the interior of a cube in } \QQ} 2^{-k} |\pi(g(E\cap Q))|\\
&= 2^{-k} \sum_{Q \text{ the interior of a cube in } \QQ} |\pi(g(E\cap Q))|\\
&\leq 2^{-k},
\end{align*}
where in the last line we used \eqref{eq:disjointprojection} to bound the sum of the lengths of the \textit{disjoint} sets $\pi(g(E\cap Q)) \subseteq [0,1]$ by $|[0,1]|=1$. 
\end{proof}

We are now ready to complete the proof of Proposition \ref{prop:example}

\begin{proof}[Proof of Proposition \ref{prop:example}]
We refer to the example $f:Q_0\rightarrow X$ defined above. Note that 
$$\HH^1(f(Q_0)) \leq \HH^1(X) < \infty,$$ 
as noted in \eqref{eq:Xlength} and that Lemma \ref{lem:exlowerbound} implies \eqref{eq:exH11}. The mapping $f$ is $4$-Lipschitz, not $1$-Lipschitz, but as noted above this suffices.

Let $E_1, \dots, E_M$ be Hard Sard sets for $f$ with constant $C_{Lip}$ that satisfy
$$ \HH^{1,1}_\infty(f, Q_0 \setminus \cup_i E_i) < \delta/2,$$
where $\delta>0$ is as in Lemma \ref{lem:exlowerbound}.

We then have
\begin{align*}
\delta &\leq \HH^{1,1}_\infty(f,A)\\
&\leq \HH^{1,1}_\infty(f, A\setminus \cup_i E_i) + \sum_{i=1}^M \HH^{1,1}_\infty(f,E_i \cap A)\\
&< \frac{\delta}{2} + CM2^{-k},
\end{align*}
where $C$ is a constant depending only on $C_{Lip}$ and not on $k$.

It follows that $M\gtrsim \delta 2^{k}$, as desired.
\end{proof}

\section{Two versions of mapping content}\label{sec:arbcontent}
In this section, we prove Corollary \ref{cor:arbcontent}. As a reminder, this result concerns the relationship between the notion of mapping content $\HH^{n,m}_\infty$, used throughout the paper, and an alternative version $\hat{\HH}^{n,m}_\infty$ defined in \eqref{eq:arbdef} that uses arbitrary sets rather than dyadic cubes.

We will need the following lemma concerning the types of sets constructed in Proposition \ref{prop:hbilip}. As usual, we write points of $\RR^{n+m}$ as $(x,y)$, where $x\in \RR^n$ and $y\in\RR^m$.
\begin{lemma}\label{lem:arbcontent}
Let $f:Q_0\rightarrow X$ be a $1$-Lipschitz map into a metric space.  Let $E\subseteq Q_0$ and $\phi\colon E \rightarrow Q_0$ a bi-Lipschitz mapping such that the map $\tilde{F}\colon \phi(E) \rightarrow X\times [0,1]^m$ defined by
$$ \tilde{F}(x,y) = (f \circ \phi^{-1}(x,y), y) $$
is bi-Lipschitz on $\phi(E)$.

Then
$$ \hat{\HH}^{n,m}_\infty(f,E) \approx  \HH^{n,m}_\infty(f,E) \approx \HH^{n+m}_\infty(E),$$
with constants depending only on $n$, $m$, and the bi-Lipschitz constants of $\phi$ and $\tilde{F}$.
\end{lemma}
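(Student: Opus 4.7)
The lemma comprises three inequalities; I would take them in order and concentrate effort on the only non-trivial one. First, $\hat{\HH}^{n,m}_\infty(f,E) \leq \HH^{n,m}_\infty(f,E)$ is immediate from \eqref{eq:arbcontent}, and $\HH^{n,m}_\infty(f,E) \lesssim \HH^{n+m}_\infty(E)$ follows from Lemma \ref{lem:areaboundscontent} together with the trivial bound $\HH^{n+m}_\infty \leq \HH^{n+m}$. So the substantive content is the lower bound $\HH^{n+m}_\infty(E) \lesssim \hat{\HH}^{n,m}_\infty(f,E)$. Since both $\phi$ and $\tilde{F}$ are bi-Lipschitz with quantitatively controlled constants, the composition $\tilde{F}\circ\phi$ is bi-Lipschitz on $E$, so $\HH^{n+m}_\infty(E) \approx \HH^{n+m}_\infty(\tilde{F}(\phi(E)))$, and it suffices to bound $\HH^{n+m}_\infty(\tilde{F}(\phi(E))) \lesssim \hat{\HH}^{n,m}_\infty(f,E)$.

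For this, I would fix $\epsilon>0$ and take a cover $\{S_i\}$ of $E$ by arbitrary subsets of $Q_0$ realizing the infimum defining $\hat{\HH}^{n,m}_\infty(f,E)$ up to $\epsilon$; intersecting with $E$, I may assume $S_i\subseteq E$. Writing $D_i:=\pi_{\RR^m}(\phi(S_i))\subseteq[0,1]^m$, which has $\diam(D_i)\lesssim\diam(S_i)$ by the Lipschitz constant of $\phi$, I observe that $\tilde{F}(\phi(S_i)) \subseteq f(S_i)\times D_i$. I would then cover each $f(S_i)$ by balls $\{B_{i,j}\}$ in $X$ with $\sum_j \diam(B_{i,j})^n \leq \HH^n_\infty(f(S_i))+\epsilon_i$; by the usual reduction (replacing any ball larger than $\diam(f(S_i))$ that hits $f(S_i)$ by a single ball of diameter $\lesssim \diam(f(S_i))$ containing all of $f(S_i)$), I may further assume $\diam(B_{i,j})\lesssim \diam(f(S_i))\leq \diam(S_i)$ for all $i,j$.

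The only serious step, and what first appears as an obstacle, is bounding $\HH^{n+m}_\infty(B_{i,j}\times D_i)$ in two regimes. Writing $\rho_{i,j}:=\diam(B_{i,j})$ and $d_i:=\diam(D_i)$: when $\rho_{i,j}\leq d_i$, I cover $D_i$ by $\lesssim (d_i/\rho_{i,j})^m$ balls in $\RR^m$ of diameter $\rho_{i,j}$ and take products with $B_{i,j}$, covering $B_{i,j}\times D_i$ by that many product balls of diameter $\rho_{i,j}$, which yields $\HH^{n+m}_\infty(B_{i,j}\times D_i)\lesssim d_i^m\rho_{i,j}^n$. When $\rho_{i,j}>d_i$, the product set has diameter $\rho_{i,j}$ and is contained in a single product ball of diameter $\rho_{i,j}$, giving the trivial bound $\HH^{n+m}_\infty(B_{i,j}\times D_i)\leq \rho_{i,j}^{n+m}$. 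The worry in this second case is that one cannot refine $B_{i,j}$ further in an arbitrary metric space $X$ — but we have arranged $\rho_{i,j}\lesssim \diam(S_i)$, so $\rho_{i,j}^{n+m}=\rho_{i,j}^n\cdot\rho_{i,j}^m\lesssim \diam(S_i)^m\rho_{i,j}^n$, and in both regimes we obtain the unified bound $\HH^{n+m}_\infty(B_{i,j}\times D_i)\lesssim \diam(S_i)^m\rho_{i,j}^n$. Summing over $j$ then $i$, using countable subadditivity of $\HH^{n+m}_\infty$, and letting $\epsilon_i,\epsilon\to 0$ then produces
\[ \HH^{n+m}_\infty(\tilde{F}(\phi(E))) \lesssim \sum_i \diam(S_i)^m\,\HH^n_\infty(f(S_i)) \lesssim \hat{\HH}^{n,m}_\infty(f,E), \]
completing the argument.
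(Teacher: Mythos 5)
Your proof is correct and follows essentially the same route as the paper: cover $E$ by arbitrary sets $S_i\subseteq E$, cover each $f(S_i)$ by balls, take products with the $\RR^m$-projections, and transport back through the bi-Lipschitz map $\tilde F\circ\phi$ to bound $\HH^{n+m}_\infty(E)$ from above. The one small difference is that the paper separates out the sets with $\HH^n_\infty(f(S_j))=0$ and proves a standalone claim that they contribute nothing, whereas you absorb them into the main estimate by covering each $f(S_i)$ with additive error $\epsilon_i$ rather than within a multiplicative factor of $2$ — a minor but genuinely cleaner bookkeeping choice, since the multiplicative form requires positivity while the additive form does not.
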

\begin{proof}
By equation \eqref{eq:arbcontent} and Lemma \ref{lem:areaboundscontent}, we have
$$ \hat{\HH}^{n,m}_\infty(f,E) \lesssim_{n,m}  \HH^{n,m}_\infty(f,E) \lesssim_{n,m} \HH^{n+m}_\infty(E).$$
It therefore suffices to show that
$$ \hat{\HH}^{n,m}_\infty(f,E) \gtrsim_{n,m}  \HH^{n+m}_\infty(E).$$

The proof of this is similar to the proof of Lemma \ref{lem:HScontent}. Let $F = f\circ \phi^{-1}$ on $\phi(E)$, so that $\tilde{F}(x,y) = (F(x,y),y)$.

Fix $\epsilon>0$ arbitrary.

Let $\{S_j\}$ be a cover of $E$ by arbitrary sets such that
$$ \sum \HH^n_\infty(f(S_j))\diam(S_j)^m < \hat{\HH}^{n,m}_\infty(f,E) + \epsilon.$$
Note that, without loss of generality, we may assume that each $S_j \subseteq E$, since replacing $S_j$ by $S_j \cap E$ can only decrease the left-hand side of the previous equation.

Let $\{T_i\} \subseteq \{S_j\}$ be an enumeration of those sets $S_j$ in the cover such that $\HH^n_\infty(f(S_j))>0$. We will need the following fact.

\begin{claim}\label{claim:arbcontent}
If $S\in \{S_j\} \setminus \{T_i\}$, then $\HH^{n+m}_\infty(\tilde{F}(\phi(S)))=0$.
\end{claim}
\begin{proof}
We may assume that $\diam(S)>0$ without loss of generality.

The assumption on $S$ implies that $\HH^n_\infty(F(\phi(S))) = \HH^n_\infty(f(S)) =0$. Let $S'=\phi(S)$ and fix $0<\eta<<\diam(S)$. We can cover $F(S')$ by balls $B_k$ such that
$$\sum_k \diam(B_k)^n < \eta.$$

Observe that 
$$ \tilde{F}(S') \subseteq F(S') \times \pi_{\RR^m}(S).$$
For each $k$, we can cover $B_k \times \pi_{\RR^m}(S)$ by $\lesssim \diam(B_k)^{-m}$  balls of diameter $\diam(B_k)$.

Therefore,
$$\HH^{n+m}_\infty(\tilde{F}(\phi(S))) \lesssim \sum_k \diam(B_k)^{-m} \diam(B_k)^{n+m} = \sum_k \diam(B_k)^n < \eta,$$
and sending $\eta$ to zero completes the proof of the claim.
\end{proof}

Let $A_i = \phi(T_i)$ for each $i$, so that $F(A_i)=f(T_i)$. Note that $\diam(A_i) \approx \diam(T_i)$, since $\phi$ is bi-Lipschitz on $E$.

For each $i$, there is a collection of balls $\{B_i^j\}$ covering $F(A_i)$ in $X$ such that
$$ \sum_{j} \diam(B_i^j)^n \leq 2\HH^n_\infty(F(A_i)).$$
Here we are using the fact that $\HH^n_\infty(F(A_i))=\HH^n_\infty(f(T_i))>0$. As in Lemma \ref{lem:HScontent}, we may also assume without loss of generality that $\diam(B_i^j) \lesssim_{n,m} \diam(F(A_i)) \lesssim \diam(A_i)$ for each $j,i$.

Therefore,
\begin{align}
\hat{\HH}^{n,m}_\infty(f, E) &\geq  \sum_j \HH^n_\infty(f(S_j))\diam(S_j)^m - \epsilon\\
&= \sum_i \HH^n_\infty(f(T_i))\diam(T_i)^m - \epsilon\\
&= \sum_i \HH^n_\infty(F(A_i))\diam(T_i)^m - \epsilon\\
&\geq \frac{1}{2} \sum_{i,j} \diam(B_i^j)^n\diam(T_i)^m - \epsilon \label{eq:arblemmabound}
\end{align}

By assumption, the map $\tilde{F}$ defined above is bi-Lipschitz. Thus,
$$ \HH^{n+m}_\infty(\tilde{F}(\phi(E))) \approx \HH^{n+m}_\infty(E).$$

Now, for each fixed $i$, 
$$\tilde{F}(A_i) \subseteq \bigcup_j \left(B_i^j \times \pi_{\RR^m}(A_i)\right). $$

We can cover each $B_i^j \times \pi_{\RR^m}(A_i)$ by 
$$\lesssim \left(\frac{\diam(A_i)}{\diam(B_i^j)}\right)^m \approx \left(\frac{\diam(T_i)}{\diam(B_i^j)}\right)^m$$ 
balls of diameter equal to $\diam(B_i^j)$.

Therefore, 
$$ \HH^{n+m}_\infty(\tilde{F}(A_i)) \lesssim \sum_j \left(\frac{\diam(T_i)}{\diam(B_i^j)}\right)^m\diam(B_i^j)^{n+m} = \sum_j \diam(B_i^j)^n \diam(T_i)^m,$$
and so, using \eqref{eq:arblemmabound},
\begin{align*}
 \hat{\HH}^{n,m}_\infty(f,E) &\geq \frac{1}{2}\sum_{i,j} \diam(B_i^j)^n\diam(T_i)^m -\epsilon\\
&\geq c_1\sum_i \HH^{n+m}_\infty(\tilde{F}(A_i)) - \epsilon\\
&\geq c_2\HH^{n+m}_\infty(\tilde{F}(\phi(E))) - \epsilon \hspace{1in} \text{(using Claim \ref{claim:arbcontent})}\\
&\geq c_3\HH^{n+m}_\infty(E) - \epsilon.
\end{align*}
The constants $c_1, c_2, c_3>0$ depend only on $n$, $m$, and the bi-Lipschitz constants of $\phi$ and $\tilde{F}$. Sending $\epsilon$ to zero completes the proof.
\end{proof}

\begin{proof}[Proof of Corollary \ref{cor:arbcontent}]

Fix $\delta>0$.

Choosing $\alpha=\delta/2$ in Proposition \ref{prop:hbilip} yields constants $N=N(\delta,n,m)$ and $L=L(\delta,n,m)$. The constant $N$ controls the number of sets $F_i$ arising the decomposition of any $1$-Lipschitz map $f:Q_0 \rightarrow X$ in Proposition \ref{prop:hbilip}, and the constant $L$ controls the bi-Lipschitz constants of the mappings $\phi_i$ and 
$$ (x,y) \mapsto (f\circ \phi_i^{-1}(x,y),y)$$
on $F_i$.

Consider any $f:Q_0\rightarrow X$ and any set $A\subseteq Q_0$ with $\HH^{n,m}_\infty(f,A)\geq \delta$. We can then write
$$ Q_0 = F_1 \cup F_2 \cup \dots \cup F_N \cup G,$$
where $F_i$ have the properties given in Proposition \ref{prop:hbilip} and $\HH^{n,m}_\infty(f,G)<\delta/2.$

Let $A_i = A \cap F_i$. On each $A_i$, Lemma \ref{lem:arbcontent} implies that 
$$ \hat{\HH}^{n,m}_\infty(f,A_i) \approx  \HH^{n,m}_\infty(f,A_i) \approx \HH^{n+m}_\infty(A_i),$$
with implied constants depending only on $n$, $m$, and $L=L(n,m,\delta)$.

Thus, at least one $A_{i_0}$ must have 
$$ \hat{\HH}^{n,m}_\infty(f,A_{i_0}) \gtrsim_{\delta,n,m} \HH^{n,m}_\infty(f,A_{i_0}) \geq \frac{1}{N}\left( \HH^{n,m}_\infty(f,A) - \HH^{n,m}_\infty(f,G) \right)  \geq \frac{\delta}{2N}.$$
This proves the corollary.

\end{proof}

\bibliography{hardersarderbib}{}
\bibliographystyle{plain}

\end{document}